\documentclass[a4paper,twoside]{article}
\usepackage{a4}
\usepackage{amssymb}
\usepackage{amsmath}
\usepackage{upref}
\usepackage{comment}
\usepackage[active]{srcltx}
\allowdisplaybreaks[2] 
\usepackage[colorlinks,citecolor=blue,linkcolor=blue]{hyperref}
\usepackage[dvipsnames]{color}
%
%
%
\newcount\minutes \newcount\hours
\hours=\time
\divide\hours 60
\minutes=\hours
\multiply\minutes -60
\advance\minutes \time
\newcommand{\klockan}{\the\hours:{\ifnum\minutes<10 0\fi}\the\minutes}
\newcommand{\tid}{\today\ \klockan}
\newcommand{\prtid}{\smash{\raise 10mm \hbox{\LaTeX ed \tid}}}
\renewcommand{\prtid}{}
%
%
\makeatletter
\pagestyle{headings}
\headheight 10pt
\def\sectionmark#1{} 
\def\subsectionmark#1{}
\newcommand{\sectnr}{\ifnum \c@secnumdepth >\z@
                 \thesection.\hskip 1em\relax \fi}
\def\@evenhead{\footnotesize\rm\thepage\hfil\leftmark\hfil\llap{\prtid}}
\def\@oddhead{\footnotesize\rm\rlap{\prtid}\hfil\rightmark\hfil\thepage}
\def\tableofcontents{\section*{Contents} 
 \@starttoc{toc}}
\makeatother
%
%
\makeatletter
\def\@biblabel#1{#1.}
\makeatother
%
%
%
\makeatletter
\let\Thebibliography=\thebibliography
\renewcommand{\thebibliography}[1]{\def\@mkboth##1##2{}\Thebibliography{#1}
\addcontentsline{toc}{section}{References}
\frenchspacing 
\setlength{\@topsep}{0pt}
\setlength{\itemsep}{0pt}%
\setlength{\parskip}{0pt plus 2pt}%
}
\makeatother
%
%
\makeatletter
\def\mdots@{\mathinner.\nonscript\!.%
 \ifx\next,.\else\ifx\next;.\else\ifx\next..\else
 \nonscript\!\mathinner.\fi\fi\fi}
\let\ldots\mdots@
\let\cdots\mdots@
\let\dotso\mdots@
\let\dotsb\mdots@
\let\dotsm\mdots@
\let\dotsc\mdots@
\def\vdots{\vbox{\baselineskip2.8\p@ \lineskiplimit\z@
    \kern6\p@\hbox{.}\hbox{.}\hbox{.}\kern3\p@}}
\def\ddots{\mathinner{\mkern1mu\raise8.6\p@\vbox{\kern7\p@\hbox{.}}%
    \raise5.8\p@\hbox{.}\raise3\p@\hbox{.}\mkern1mu}}
\makeatother
%
%
\makeatletter
\let\Enumerate=\enumerate
\renewcommand{\enumerate}{\Enumerate%
\setlength{\@topsep}{0pt}
\setlength{\itemsep}{0pt}%
\setlength{\parskip}{0pt plus 1pt}%
\renewcommand{\theenumi}{\textup{(\alph{enumi})}}%
\renewcommand{\labelenumi}{\theenumi}%
}
\let\endEnumerate=\endenumerate
\renewcommand{\endenumerate}{\endEnumerate\unskip}
\makeatother
%
%
\makeatletter
\def\@seccntformat#1{\csname the#1\endcsname.\quad}
\makeatother
%
%
\newcommand{\authortitle}[2]{\author{#1}\title{#2}\markboth{#1}{#2}}
%
%
\newcommand{\art}[6]{{\sc #1, \rm #2, \it #3\/ \bf #4 \rm (#5), \mbox{#6}.}}
\newcommand{\artin}[3]{{\sc #1, \rm #2,  in #3.}}
\newcommand{\artnopt}[6]{{\sc #1, \rm #2, \it #3\/ \bf #4 \rm (#5), \mbox{#6}}}
\newcommand{\artprep}[3]{{\sc #1, \rm #2, \rm #3.}}
\newcommand{\auth}[2]{{#1, #2.}}
\newcommand{\book}[3]{{\sc #1, \it #2, \rm #3.}}
\newcommand{\AND}{{\rm and }}
%
%
\RequirePackage{amsthm}
\newtheoremstyle{descriptive}%
  {\topsep}   
  {\topsep}   
  {\rmfamily} 
  {}          
  {\bfseries} 
  {.}         
  { }         
  {}          
\newtheoremstyle{propositional}%
  {\topsep}   
  {\topsep}   
  {\itshape}  
  {}          
  {\bfseries} 
  {.}         
  { }         
  {}          
\theoremstyle{propositional}
\newtheorem{thm}{Theorem}[section]
\newtheorem{prop}[thm]{Proposition}
\newtheorem{lem}[thm]{Lemma}
\newtheorem{cor}[thm]{Corollary}
\theoremstyle{descriptive}
\newtheorem{deff}[thm]{Definition}
\newtheorem{example}[thm]{Example}
\newtheorem{remark}[thm]{Remark}
\newtheorem{openprob}[thm]{Open problem}
%
%
%
%
%
\makeatletter
\renewenvironment{proof}[1][\proofname]{\par
  \pushQED{\qed}%
  \normalfont 
  \trivlist
  \item[\hskip\labelsep
        \itshape
    #1\@addpunct{.}]\ignorespaces
}{%
  \popQED\endtrivlist\@endpefalse
}
\makeatother
%
%
\newcommand{\setm}{\setminus}
\renewcommand{\emptyset}{\varnothing}
%
%
%
%
%
\def\vint{\mathop{\mathchoice%
          {\setbox0\hbox{$\displaystyle\intop$}\kern 0.22\wd0%
           \vcenter{\hrule width 0.6\wd0}\kern -0.82\wd0}%
          {\setbox0\hbox{$\textstyle\intop$}\kern 0.2\wd0%
           \vcenter{\hrule width 0.6\wd0}\kern -0.8\wd0}%
          {\setbox0\hbox{$\scriptstyle\intop$}\kern 0.2\wd0%
           \vcenter{\hrule width 0.6\wd0}\kern -0.8\wd0}%
          {\setbox0\hbox{$\scriptscriptstyle\intop$}\kern 0.2\wd0%
           \vcenter{\hrule width 0.6\wd0}\kern -0.8\wd0}}%
          \mathopen{}\int}
%
%
\newcommand{\Cp}{{C_p}}
\newcommand{\bCp}{{\protect\itoverline{C}_p}}
\DeclareMathOperator{\capp}{cap}
\newcommand{\cp}{\capp_p}
\DeclareMathOperator{\diam}{diam}
\newcommand{\grad}{\nabla}
\DeclareMathOperator{\dvg}{div}
\DeclareMathOperator{\dimH}{dim_H}
\DeclareMathOperator{\Lip}{Lip}
\DeclareMathOperator{\supp}{supp}
\DeclareMathOperator{\id}{id}
\DeclareMathOperator*{\essliminf}{ess\,lim\,inf}
\DeclareMathOperator*{\essinf}{ess\,inf}
\newcommand{\bdry}{\partial}
\newcommand{\bdy}{\bdry}
\newcommand{\loc}{_{\rm loc}}
{\catcode`p =12 \catcode`t =12 \gdef\eeaa#1pt{#1}}      
\def\accentadjtext#1{\setbox0\hbox{$#1$}\kern   
                \expandafter\eeaa\the\fontdimen1\textfont1 \ht0 }
\def\accentadjscript#1{\setbox0\hbox{$#1$}\kern 
                \expandafter\eeaa\the\fontdimen1\scriptfont1 \ht0 }
\def\accentadjscriptscript#1{\setbox0\hbox{$#1$}\kern   
                \expandafter\eeaa\the\fontdimen1\scriptscriptfont1 \ht0 }
\def\accentadjtextback#1{\setbox0\hbox{$#1$}\kern       
                -\expandafter\eeaa\the\fontdimen1\textfont1 \ht0 }
\def\accentadjscriptback#1{\setbox0\hbox{$#1$}\kern     
                -\expandafter\eeaa\the\fontdimen1\scriptfont1 \ht0 }
\def\accentadjscriptscriptback#1{\setbox0\hbox{$#1$}\kern 
                -\expandafter\eeaa\the\fontdimen1\scriptscriptfont1 \ht0 }
\def\itoverline#1{{\mathsurround0pt\mathchoice
        {\rlap{$\accentadjtext{\displaystyle #1}
                \accentadjtext{\vrule height1.593pt}
                \overline{\phantom{\displaystyle #1}
                \accentadjtextback{\displaystyle #1}}$}{#1}}
        {\rlap{$\accentadjtext{\textstyle #1}
                \accentadjtext{\vrule height1.593pt}
                \overline{\phantom{\textstyle #1}
                \accentadjtextback{\textstyle #1}}$}{#1}}
        {\rlap{$\accentadjscript{\scriptstyle #1}
                \accentadjscript{\vrule height1.593pt}
                \overline{\phantom{\scriptstyle #1}
                \accentadjscriptback{\scriptstyle #1}}$}{#1}}
        {\rlap{$\accentadjscriptscript{\scriptscriptstyle #1}
                \accentadjscriptscript{\vrule height1.593pt}
                \overline{\phantom{\scriptscriptstyle #1}
                \accentadjscriptscriptback{\scriptscriptstyle #1}}$}{#1}}}}
\def\itunderline#1{{\mathsurround0pt\mathchoice
        {\rlap{$\underline{\phantom{\displaystyle #1}
                \accentadjtextback{\displaystyle #1}}$}{#1}}
        {\rlap{$\underline{\phantom{\textstyle #1}
                \accentadjtextback{\textstyle #1}}$}{#1}}
        {\rlap{$\underline{\phantom{\scriptstyle #1}
                \accentadjscriptback{\scriptstyle #1}}$}{#1}}
        {\rlap{$\underline{\phantom{\scriptscriptstyle #1}
                \accentadjscriptscriptback{\scriptscriptstyle #1}}$}{#1}}}}
%
%
%
%
%
%
%
\newcommand{\limplus}{{\mathchoice{\vcenter{\hbox{$\scriptstyle +$}}}
  {\vcenter{\hbox{$\scriptstyle +$}}}
  {\vcenter{\hbox{$\scriptscriptstyle +$}}}
  {\vcenter{\hbox{$\scriptscriptstyle +$}}}
}}
%
%
\newcommand{\alp}{\alpha}
\newcommand{\ga}{\gamma}
\newcommand{\de}{\delta}
\newcommand{\eps}{\varepsilon}
\newcommand{\Om}{\Omega}
\newcommand{\clOmQ}{{\overline{\Om}\mspace{1mu}}^Q}
\newcommand{\clOmQtwo}{{\overline{\Om}\mspace{1mu}}^{Q_2}}
\newcommand{\clOm}{{\overline{\Om}}}
\renewcommand{\phi}{\varphi}
\newcommand{\p}{{$p\mspace{1mu}$}}   
\newcommand{\R}{\mathbf{R}}
\newcommand{\Q}{\mathbf{Q}}
\newcommand{\eR}{{\overline{\R}}}
\newcommand{\Kt}{\widetilde{K}}
\newcommand{\Ct}{\widetilde{C}}
\newcommand{\Et}{\widetilde{E}}
\newcommand{\Ga}{\Gamma}
\newcommand{\QW}{Q_{\mathcal{W}}}
%
%
\def\cprime{{\mathsurround0pt$'$}}
%
%
\newcommand{\Np}{N^{1,p}}
\newcommand{\Dp}{D^p}
\newcommand{\Dploc}{D^{p}\loc}
\newcommand{\Nploc}{N^{1,p}\loc}
\newcommand{\Lp}{L^p}
\newcommand{\Lploc}{L^p\loc}
%
%
\newcommand{\Hp}{P}                 
\newcommand{\Hpind}[1]{P_{#1}}      
\newcommand{\uHpind}[1]{\itoverline{P}_{#1}}      
\newcommand{\lHpind}[1]{\itunderline{P}_{#1}}      
\newcommand{\uP}{\itoverline{P}}     
\newcommand{\lP}{\itunderline{P}} 
\newcommand{\uW}{\overline{W}}     
\newcommand{\lW}{\itunderline{W}} 
\newcommand{\uZ}{\itoverline{Z}}     
\newcommand{\lZ}{\itunderline{Z}} 
\newcommand{\uS}{\itoverline{S}}     
\newcommand{\lS}{\itunderline{S}} 
\newcommand{\oHp}{H}                
\newcommand{\oHpind}[1]{H_{#1}}     
\newcommand{\A}{\mathcal{A}}%
\newcommand{\At}{\widetilde{\mathcal{A}}}%
\newcommand{\K}{\mathcal{K}}%
\newcommand{\UU}{\mathcal{U}}%
\newcommand{\LL}{\mathcal{L}}%
\newcommand{\ut}{\tilde{u}}
\newcommand{\ft}{\tilde{f}}

\newcommand{\bdyone}{{\bdy^1}}
\newcommand{\bdyQ}{{\bdy^Q}}
\newcommand{\bdyj}{{\bdy^j}}
\newcommand{\tauone}{{\tau^1}}
\newcommand{\toone}{{\overset{\tau^1}\longrightarrow}}
\newcommand{\clOmone}{{\overline{\Om}^1}}
\newcommand{\clOmi}{{\overline{\Om}^i}}
\newcommand{\clOmQi}{{\overline{\Om}^{Q_i}}}
\newcommand{\clOmj}{{\overline{\Om}^j}}
\newcommand{\Omone}{{\Om^1}}

\newcommand{\bdytwo}{{\bdy^2}}
\newcommand{\tautwo}{{\tau^2}}
\newcommand{\clOmtwo}{{\overline{\Om}^2}}
\newcommand{\Omtwo}{{\Om^2}}

\newcommand{\Spind}[1]{S_{#1}}      
\newcommand{\uSpind}[1]{\itoverline{S}_{#1}}      
\newcommand{\lSpind}[1]{\itunderline{S}_{#1}}      
\newcommand{\DU}{\mathcal{DU}}%

\newcommand{\fh}{\hat{f}}
\newcommand{\Gh}{\widehat{G}}
\newcommand{\Gt}{\widetilde{G}}
\newcommand{\Qh}{\widehat{Q}}
\newcommand{\Cc}{\mathcal{C}}
\newcommand{\Cbdd}{\Cc_{\rm bdd}}
%
%
\numberwithin{equation}{section}
\newcommand{\eqv}{\ensuremath{
\mathchoice{\quad \Longleftrightarrow \quad}{\Leftrightarrow}
                {\Leftrightarrow}{\Leftrightarrow}} }
\newcommand{\imp}{\ensuremath{\Rightarrow} }

\newenvironment{ack}{\medskip{\it Acknowledgement.}}{}

\hyphenation{quasi-con-tin-u-ous non-quasi-con-tin-u-ous Shan-mu-ga-lin-gam}

\begin{document}

\authortitle{Anders Bj\"orn, Jana Bj\"orn
	and Tomas Sj\"odin}
{The Dirichlet problem for \p-harmonic functions with respect to
arbitrary compactifications}
\author{
Anders Bj\"orn \\
\it\small Department of Mathematics, Link\"oping University, \\
\it\small SE-581 83 Link\"oping, Sweden\/{\rm ;}
\it \small anders.bjorn@liu.se
\\
\\
Jana Bj\"orn \\
\it\small Department of Mathematics, Link\"oping University, \\
\it\small SE-581 83 Link\"oping, Sweden\/{\rm ;}
\it \small jana.bjorn@liu.se
\\
\\
Tomas Sj\"odin \\
\it\small Department of Mathematics, Link\"oping University, \\
\it\small SE-581 83 Link\"oping, Sweden\/{\rm ;}
\it \small tomas.sjodin@liu.se
}

\date{}
\maketitle

\noindent{\small
{\bf Abstract.} 
We study the Dirichlet problem 
for \p-harmonic functions on metric spaces with respect to
arbitrary compactifications.
A particular focus is on the Perron method,
and as a new approach to the invariance problem 
we
introduce Sobolev--Perron solutions.
We obtain various resolutivity and invariance results, and also
show that most functions that have earlier been proved
to be resolutive
are in fact Sobolev-resolutive.
We also introduce (Sobolev)--Wiener solutions and harmonizability
in this nonlinear context, and study their
connections to (Sobolev)--Perron solutions,
partly using $Q$-compactifications.
} 

\bigskip
\noindent
{\small \emph{Key words and phrases}: 
Dirichlet problem,
harmonizable,
invariance,
metric space,
nonlinear potential theory,
Perron solution,
\p-harmonic function,
$Q$-compactification,
quasicontinuous,
resolutive, 
Wiener solution.
}

\medskip
\noindent
{\small Mathematics Subject Classification (2010): 
Primary: 31E05; Secondary: 30L99, 31C45, 35J66, 35J92, 49Q20.
}

\section{Introduction}

The Dirichlet problem asks for a 
solution of a partial differential equation 
in a bounded domain $\Om \subset \R^n$ 
with prescribed boundary values $f$ on $\bdy \Om$.
Even for harmonic functions, i.e.\ for solutions of $\Delta u=0$, 
and with continuous  $f$,
it is not always possible to solve this boundary value problem so that
the solution $u \in \Cc(\overline{\Om})$.
To overcome this problem, one is forced  
to formulate the boundary value problem in a generalized sense.

Perhaps the most fruitful approach to solving the Dirichlet problem in
very general situations is the Perron method, which always
produces an upper and a lower Perron  solution. 
When they coincide, this gives 
a nice solution $P_{\Om}f$  of the Dirichlet problem and 
$f$ is called \emph{resolutive}. 
One of the major problems in the theory is to determine which functions are 
resolutive.
In the linear potential theory, 
Brelot~\cite{brelot} showed
that $f$ is resolutive if and only if $f \in L^1(d\omega)$, 
where $\omega$ is the harmonic measure. 
A similar characterization in the nonlinear theory is impossible since there
is no equivalent of the harmonic measure suitable for this task.

The aim of this article is to study the Dirichlet problem and related questions
about boundary values in a very general setting for the nonlinear potential
theory associated with 
\p-harmonic functions on metric spaces.
Nevertheless, most of our results are new even in $\R^n$, even though
we formulate them in metric spaces, and all our (counter)examples are
in Euclidean domains.
A \emph{\p-harmonic function} on $\R^n$ is a continuous solution of 
the \p-Laplace equation 
\begin{equation}   \label{eq-p-Laplace}
\Delta_p u:=\dvg(|\grad u|^{p-2}\grad u)=0.
\end{equation}
On metric spaces, \p-harmonic functions are defined through
an energy minimizing problem (see Definition~\ref{def-quasimin}), which
on $\R^n$ is equivalent to the definition above.

The Dirichlet problem and the closely related boundary regularity
for \p-harmonic functions defined 
by~\eqref{eq-p-Laplace} have been studied by e.g.\ 
Granlund--Lindqvist--Martio~\cite{GLM86},
Heinonen--Kilpel\"ainen~\cite{HeiKil},
Kilpel\"ainen~\cite{Kilp89}, 
Kilpel\"ainen--Mal\'y~\cite{KilMaGenDir}, \cite{KilMa-q-open}, \cite{KilMa-Acta}, 
Lind\-qvist--Martio~\cite{LiMa-Acta}, 
Maeda--Ono~\cite{MaedaOnoJMSJ2000}, \cite{MaedaOnoHiro2000}
and Maz\cprime ya~\cite{Maz70} in $\R^n$,  
by Heinonen--Kilpel\"ainen--Martio~\cite{HeKiMa} in weighted $\R^n$, 
by Holopainen~\cite{Holop} and Lucia--Puls~\cite{LucPuls} 
on Riemannian manifolds and  in 
\cite{ABjump}--\cite{BBS3}, 
\cite{BBSdir}, 
\cite{Hansevi1}, \cite{Hansevi2} and~\cite{Sh-harm}
on metric spaces.

It is well known that two \p-harmonic functions
on $\Om$ which  coincide 
outside a compact subset of $\Om$,
coincide in all of $\Om$. 
This can be rephrased as saying 
that a \p-harmonic function is uniquely determined by its boundary values, 
whenever there are some natural such values to attach to the function.
However, in many situations, such as for the slit disc in the plane, 
the metric boundary is too small to be able to attach natural boundary values 
even to quite well behaved harmonic functions, 
since their behaviour can be very different on each side of the slit.
Therefore, it is natural to study the Dirichlet problem for other 
compactifications as well. 
In the linear potential theory, many such compactifications 
have been studied, most notably that by Martin~\cite{martin}, 
to produce boundaries allowing more harmonic functions to have natural 
boundary values attached to them.

On the contrary, 
most of the nonlinear papers so far only deal with
the Dirichlet problem with respect to the metric boundary.
In~\cite{BBSdir}, the Dirichlet problem  was studied
with respect to the Mazurkiewicz boundary in the case when it is 
a compactification. 
In this case the Mazurkiewicz boundary coincides with the prime end boundary
introduced 
in~\cite{ABBSprime}.
Estep--Shanmugalingam~\cite{ES} and A.~Bj\"orn~\cite{ABcomb}
obtained partial results also when the prime 
end boundary is noncompact.
The Dirichlet problem on the whole space based on so-called \p-Royden 
boundaries was pursued in Lucia--Puls~\cite{LucPuls}.
In this paper, 
we generalize the treatment from \cite{BBSdir} in a different
direction by studying 
the Dirichlet problem with respect to arbitrary compactifications
(which do not have to be metrizable).
For domains in $\R^n$ some results in this direction were
obtained by Maeda--Ono~\cite{MaedaOnoJMSJ2000}, \cite{MaedaOnoHiro2000}.

A particular problem is to determine
when resolutive functions are \emph{invariant}
under perturbations on small sets, e.g.\ if $f$ is  
resolutive and  $k=f$ outside a set of zero capacity, is then $P_\Om k=P_\Om f$? 
In the linear case this is well known 
(and holds when
$\omega(\{x: k(x) \ne f(x)\})=0$), but
in the nonlinear case this is only known under additional 
assumptions on $f$. 
Such results were first obtained by 
Avil\'es--Manfredi~\cite{AvilesMan} and 
Kurki~\cite{kurki} who considered invariance 
of upper Perron solutions for characteristic functions
(also called \p-harmonic measures).
Invariance for continuous $f$ was proved
in~\cite{BBS2}
(covering also the metric space case).
Further invariance results have been obtained
in~\cite{ABjump}, \cite{ABcomb}, \cite{BBS2}, \cite{BBS3}, \cite{BBSdir}
and \cite{Hansevi2}.

As a new approach to the invariance problem, we introduce \emph{Sobolev--Perron
solutions} $S_{\Om} f$.
They are defined using upper (superharmonic) functions
which are also required to belong to a suitable Sobolev space 
(or more precisely to the Dirichlet space $\Dp(\Om)$
of functions with finite \p-energy). 
It turns out that all Sobolev-resolutive functions are resolutive
(Corollary~\ref{cor-uHp-lHp}) but not vice versa 
(Examples~\ref{ex-Sobolev-resolutivity} and~\ref{ex-resol-not-Sob}).
At the same time, we are also able to show that most of the functions
that have earlier been proved
to be resolutive are in fact
Sobolev-resolutive.
Summarizing (parts of) 
Theorem~\ref{thm-Newt-resolve-Omm} and Proposition~\ref{prop-strres-invariance} 
we get the following result, which holds true for arbitrary compactifications.
We denote by $\partial^1 \Om$ the boundary of $\Om$ induced by a 
compactification $\clOm^1$ of $\Om$, and we will also use the 
somewhat abusive notation $\Om^1$ to denote the set $\Om$ with the 
intended boundary $\partial^1 \Om$. 

\begin{thm} Let $\clOm^1$ be a compactification of $\Om$. 
\begin{enumerate}
\item If $f \in \Cc(\clOmone)$ is such that $f|_{\Om} \in \Dp(\Om)$, 
then $f|_{\partial^1 \Om}$ is Sobolev-resolutive. 
\item If $f:\partial^1 \Om \to \eR $ 
is Sobolev-resolutive and $k=f$ outside a set of zero 
$\bCp$-capacity, then 
\[
 S_{\Om^1}k = S_{\Om^1} f.
\]
\end{enumerate}
\end{thm}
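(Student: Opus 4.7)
My approach treats the two parts separately, with both relying on the fact that Dirichlet functions on $\Om$ extend quasicontinuously to the compactification $\clOmone$, and that this extension determines their boundary behaviour on $\bdy^1\Om$ up to sets of $\bCp$-capacity zero.

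For part~(a), I would exhibit a single $p$-harmonic function that lies in both the upper and lower Sobolev--Perron classes for $f$. Since $f|_\Om \in \Dp(\Om)$, solving the usual energy-minimization problem over the affine class $\{v \in \Dp(\Om) : v - f \in N^{1,p}_0(\Om)\}$ produces a $p$-harmonic minimizer $h \in \Dp(\Om)$. The key step is to show that the quasicontinuous extension of $h$ to $\clOmone$ agrees with the continuous extension $f$ at $\bCp$-quasi every point of $\bdy^1\Om$; this follows from $h - f \in N^{1,p}_0(\Om)$ together with the characterization of $N^{1,p}_0$-functions as Dirichlet functions whose quasicontinuous extension vanishes $\bCp$-q.e.\ on $\bdy^1\Om$. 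Being simultaneously $p$-super- and $p$-subharmonic on $\Om$, lying in $\Dp(\Om)$, and having the correct boundary values on $\bdy^1\Om$, the function $h$ belongs to both the upper and lower Sobolev--Perron classes for $f$. Consequently $\uS_{\Om^1} f \le h \le \lS_{\Om^1} f$, and combined with the general inequality $\lS_{\Om^1} f \le \uS_{\Om^1} f$ this forces equality, which is exactly Sobolev-resolutivity.

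For part~(b), I would argue that the upper and lower Sobolev--Perron classes for $f$ and $k$ coincide. Let $u \in \Dp(\Om)$ be $p$-superharmonic and satisfy the boundary condition $\liminf_{y \to x} u(y) \ge f(x)$ (as $y \in \Om$ tends to $x$) defining the upper Sobolev--Perron class, allowed to fail on some $\bCp$-null subset of $\bdy^1\Om$. Adjoining the $\bCp$-null set $\{x : k(x) \ne f(x)\}$ to this exceptional set gives another $\bCp$-null set off which the corresponding inequality holds with $k$ in place of $f$; hence $u$ is also a valid upper competitor for $k$. The symmetric argument in the opposite direction, together with the analogous argument for the lower class, yields $\uS_{\Om^1} k = \uS_{\Om^1} f$ and $\lS_{\Om^1} k = \lS_{\Om^1} f$, so that $S_{\Om^1} k = S_{\Om^1} f$.

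The main obstacle is reconciling three different notions of ``boundary value'' on $\bdy^1\Om$: the continuous extension of $f$, the Sobolev trace encoded by $N^{1,p}_0$, and the Perron-style pointwise $\liminf$ defining $\uS$ and $\lS$. In part~(a), identifying the boundary values of the minimizer $h$ with $f$ $\bCp$-quasi-everywhere requires the full quasicontinuity theory for Dirichlet functions on $\clOmone$; in part~(b), the invariance depends on the Sobolev--Perron classes tolerating $\bCp$-null exceptional sets on $\bdy^1\Om$, a flexibility that must be built into the definitions of $\uS$ and $\lS$ from the outset. Once these technical points are established, both parts follow as outlined above.
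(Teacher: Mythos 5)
Your argument has a genuine gap that runs through both parts. The upper and lower Sobolev--Perron classes $\DU_f(\Omone)$ in Definition~\ref{def-Perron} require the boundary inequality~\eqref{eq-def-Perron} to hold at \emph{every} $x\in\bdyone\Om$, not merely at $\bCp(\,\cdot\,;\Omone)$-quasi-every point. The statement that a $\bCp$-null exceptional set can be allowed is Corollary~\ref{cor-Sf-only-qe}, which the paper derives from Proposition~\ref{prop-strres-invariance} --- and Proposition~\ref{prop-strres-invariance} is precisely part~(b) of the theorem. So your argument for part~(b), namely that a competitor for $f$ is automatically a competitor for $k$ because the two boundary data differ only on a $\bCp$-null set, is circular: it presupposes exactly the tolerance it must establish. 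You acknowledge this by saying the flexibility ``must be built into the definitions of $\uS$ and $\lS$ from the outset'', but it is not in Definition~\ref{def-Perron}, and changing the definition would alter the statement rather than prove it.

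Part~(a) contains a second, independent gap. Even granting the q.e.\ relaxation of Corollary~\ref{cor-Sf-only-qe}, it does not follow from ``$h-f\in\Np_0(\Om)$ and the zero-extension of $h-f$ to $\clOmone$ is $\bCp(\,\cdot\,;\Omone)$-quasicontinuous'' (Proposition~\ref{prop-qcont-Omone-Omtwo}) that $\liminf_{\Om\ni y\toone x}h(y)\ge f(x)$ for $\bCp$-q.e.\ $x\in\bdyone\Om$. Quasicontinuity of the difference only controls its limit as $y\to x$ \emph{within} $\clOmone\setm U$ for a small open set $U$, whereas the $\liminf$ in~\eqref{eq-def-Perron} is over the unrestricted approach $\Om\ni y\toone x$, and nothing prevents $h$ from dropping far below $f$ along sequences inside $U$. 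This is exactly why the paper does not try to exhibit $h=\oHp f$ directly as a single element of $\DU_f(\Omone)$. Instead, both Theorem~\ref{thm-Newt-resolve-Omm} and Proposition~\ref{prop-strres-invariance} are proved by a perturbation device: take the auxiliary functions $\psi_j$ of Lemma~\ref{lem-Newt}, which become arbitrarily large on the shrinking open sets $U_k$ covering the bad boundary set; solve obstacle problems with obstacles of the form $u+\psi_j$; and verify that the lsc-regularized solutions $\phi_j$ satisfy~\eqref{eq-def-Perron} at \emph{every} boundary point, hence $\phi_j\in\DU_f(\Omone)$. Proposition~\ref{prop-3.2} then drives $\phi_j$ down q.e.\ to $u$ (resp.\ to $\oHp f$), giving the required inequalities. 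This obstacle-problem construction is the key mechanism missing from your proposal; without it neither part closes.
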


Here, and for the other theorems below, 
we assume the standing assumptions given in the beginning
of Section~\ref{sect-superharm}. 
In particular, these results hold for the usual \p-harmonic functions
in bounded Euclidean domains.

The capacity $\bCp$ is an interior version of the 
Sobolev capacity, which sees
the boundary $\bdry^1\Om$ only from inside $\Om$ and is thus well adapted
to the Dirichlet problem. 
In particular, it makes sense also for subsets of the boundary $\partial^1 \Om$,
which are not in general subsets of the metric space we start from. 
Similar capacities were earlier used in 
\cite{BBSdir}, \cite{ES}, \cite{Hansevi2}
 and Kilpel\"ainen--Mal\'y~\cite{KilMaGenDir}. 

Our next aim is to construct compactifications by the general method 
of embeddings into product spaces, and to see which such constructions lead 
to \emph{resolutive boundaries} 
(i.e.\ boundaries for which all continuous functions are resolutive). 
The idea of $Q$-compactifications is to prescribe a set $Q \subset \Cc(\Om)$
with the aim of defining the smallest boundary 
$\partial^Q \Om$ for which every function in $Q$ has a continuous extension 
to the boundary. 
The fundamental existence and uniqueness (up to homeomorphism) theorem 
for such compactifications is due to Constantinescu--Cornea~\cite{CC}.

A fundamental concept used to study which sets $Q$ lead to 
resolutive boundaries 
is \emph{harmonizability}.
It is based on the fact
that a \p-harmonic function is uniquely determined by its behaviour close 
to the boundary. 
In the linear potential theory on Riemann surfaces this concept 
was also introduced and studied by Constantinescu--Cornea~\cite{CC}
and is closely related to the Wiener solutions defined by Wiener~\cite{Wien}.
We generalize this approach to the nonlinear case, 
which has earlier been considered by 
Maeda--Ono~\cite{MaedaOnoJMSJ2000}, \cite{MaedaOnoHiro2000} on $\R^n$.
This time, one 
defines upper and lower Wiener solutions 
for functions $f$ defined inside $\Om$ (and not on the boundary
as for Perron solutions).
When they coincide, 
$f$ is \emph{harmonizable} 
and we 
denote the common Wiener solution by $Wf$.
Two functions which are equal 
outside some compact subset of  $\Om$ give,
by definition, the same upper and lower Wiener solutions,
and thus it is only the values of $f$ near the boundary that are relevant.

Also here we introduce the alternative (new)
concepts of Sobolev-harmonizability  
and Sobolev--Wiener solutions $Zf$ which turn out to satisfy similar 
perturbation properties as Sobolev--Perron solutions.
We observe that Sobolev-harmonizability implies harmonizability,
while Example~\ref{ex-harm-not-sob} shows that the converse is not
true. 
We also relate (Sobolev)--Wiener and (Sobolev)--Perron solutions, as well
as resolutivity and harmonizability.
(Parts of) 
Theorem~\ref{thm-cont-harm-res} and Proposition~\ref{prop-Kf-qe-invar} 
can be summarized in the following way.

\begin{thm}Let $\clOm^1$ be a compactification of $\Om$. 
\begin{enumerate}
\item If $f \in \Cc(\clOm^1)$ then $f|_{\Om}$ 
is\/ \textup{(}Sobolev\/\textup{)}-harmonizable if 
and only if $f|_{\partial^1 \Om}$ is\/ \textup{(}Sobolev\/\textup{)}-resolutive. 
In this case we  have 
\[
   Wf=P_{\Om^1} f \quad (\text{resp.\ } Zf=S_{\Om^1}f{\rm)}.
\]
\item If $f: \Om \to \eR$ is Sobolev-harmonizable and 
$k=f$ 
outside a set of capacity zero, then 
$Zk=Zf$.
\end{enumerate} 
\end{thm}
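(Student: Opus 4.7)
My plan is to prove the two parts separately, working directly with the defining classes of upper and lower functions in each setting.

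For part (a), the crucial observation is that continuity of $f$ on $\clOm^1$ converts the ``$\liminf$ at the boundary'' condition defining the upper Perron class into the ``dominance outside a compact set'' condition defining the upper Wiener class, and vice versa. Concretely, suppose $f \in \Cc(\clOm^1)$ and $u$ lies in the upper Perron class for $f|_{\bdy^1\Om}$. Fix $\eps > 0$; for each $x \in \bdy^1\Om$, the assumption $\liminf_{y \to x} u(y) \geq f(x)$ together with the continuity of $f$ at $x$ produces a neighbourhood $V_x \subset \clOm^1$ on which $u \geq f - \eps$. Compactness of $\bdy^1\Om$ in $\clOm^1$ yields finitely many such $V_x$ covering the boundary, and hence a compact $K \subset \Om$ with $u + \eps \geq f$ on $\Om \setm K$. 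Thus $u + \eps$ belongs to the upper Wiener class for $f|_{\Om}$, giving $\uW f \leq \uP f + \eps$; letting $\eps \downarrow 0$ yields $\uW f \leq \uP f$. The converse $\uP f \leq \uW f$ is immediate: if $u \geq f$ outside a compact $K \subset \Om$, then $\liminf_{y \to x} u(y) \geq \liminf_{y \to x} f(y) = f(x)$ for every $x \in \bdy^1\Om$ by continuity. Lower solutions are treated symmetrically. The entire argument transports verbatim to the Sobolev setting, since the shift by the constant $\eps$ preserves membership in $\Dp(\Om)$. One thus obtains $\uP f = \uW f$, $\lP f = \lW f$, $\uS f = \uZ f$ and $\lS f = \lZ f$, establishing both equivalences and the identities $Wf = P_{\Om^1} f$ and $Zf = S_{\Om^1} f$.

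For part (b), the main point is that the Sobolev--Wiener classes are stable under $\bCp$-capacity-zero perturbations of the target function. Every member $u$ of such a class is a superharmonic function in $\Dp(\Om)$, hence quasicontinuous, so the defining inequality ``$u \geq f$ on $\Om \setm K$'' is naturally taken up to a set of $\bCp$-capacity zero. If $u$ belongs to the upper Sobolev--Wiener class for $f$, so $u \geq f$ off a capacity-zero set on $\Om \setm K$, then the assumption that $k = f$ off a capacity-zero set yields $u \geq k$ off the union of two capacity-zero sets on $\Om \setm K$, placing $u$ also in the upper Sobolev--Wiener class for $k$. Hence $\uZ k \leq \uZ f$, and the symmetric argument gives $\uZ k = \uZ f$. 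The analogous reasoning for the lower class yields $\lZ k = \lZ f$; since $f$ is Sobolev-harmonizable we have $\uZ f = \lZ f$, so the same holds for $k$, proving both that $k$ is Sobolev-harmonizable and that $Zk = Zf$.

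The main technical obstacle I anticipate is confirming the precise form of the defining inequalities for the Sobolev--Wiener classes, in particular whether they are stated pointwise or $\bCp$-quasi-everywhere, and verifying that the quasicontinuity of superharmonic functions in $\Dp(\Om)$ indeed makes those classes invariant under capacity-zero modifications of $f$. This is presumably codified in the part of Proposition~\ref{prop-Kf-qe-invar} (or supporting lemmas) that I am assuming earlier in the paper. Once these definitional details are pinned down, both (a) and (b) reduce to the elementary manipulations of upper and lower classes sketched above.
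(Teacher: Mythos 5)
Part (a) of your proposal is essentially the paper's argument: both directions rest on the interplay between continuity of $f$ on $\clOmone$ and the $\liminf$/compact-exhaustion conditions, with a shift by $\eps$ and a covering of $\bdyone\Om$ by $\tauone$-neighbourhoods. The paper routes this through Proposition~\ref{prop-usc-Wf-le-Pf} (stated for semicontinuous $f$, then applied to $\pm f$), but the content is the same, and the observation that $u+\eps\in\Dp(\Om)$ whenever $u\in\Dp(\Om)$ carries over to the Sobolev case exactly as you say.

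Part (b), however, has a genuine gap, and it is precisely at the point you flag as ``the main technical obstacle''. Your argument rests on the claim that membership in the upper Sobolev--Wiener class $\DU^W_k$ ``is naturally taken up to a set of capacity zero'' because each $u\in\DU^W_f$ is quasicontinuous. But the paper's Definition~\ref{def-Weiner-soln} requires $u\ge f$ \emph{pointwise everywhere} on $\Om\setm K$, and quasicontinuity of $u$ does nothing to weaken that. If $k(x_0)=+\infty$ at a single point $x_0$ where $u(x_0)<\infty$, then $u\notin\UU^W_k$ no matter how small the exceptional set is, so from $u\ge k$ q.e.\ you cannot conclude $u\in\DU^W_k$. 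In fact, Remark~\ref{rmk-qe} points out that the q.e.\ and pointwise definitions of the Sobolev--Wiener class give the same solution \emph{because of} Proposition~\ref{prop-Kf-qe-invar} — the very statement you are trying to prove — so invoking that equivalence here is circular. The paper's actual proof takes $u\in\DU^W_f$, adds a correction $\psi_j$ from Lemma~\ref{lem-Newt} that blows up on the capacity-zero set $\{h\ne0\}$, solves the $\K_{u+\psi_j,u+\psi_j}$-obstacle problem to obtain a genuine member $\phi_j\in\DU^W_{f+h}$, and then uses Proposition~\ref{prop-3.2} to let $\phi_j$ decrease q.e.\ to $u$, which combined with lsc-regularization gives $u\ge\uZ(f+h)$ everywhere. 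That obstacle-problem step is what your proposal is missing, and without it the argument does not close.
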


Theorem~\ref{thm-res-newt-res}  and Corollary~\ref{cor-N1P-res} give
the following characterization of resolutivity.

\begin{thm} Assume that $Q \subset \Cbdd(\Om)$.
\begin{enumerate}
\item If $Q$ is a vector lattice containing the constant functions,
then $\partial^Q \Om$ is\/ \textup{(}Sobolev\/\textup{)}-resolutive 
if and only if every function in $Q$ is\/ \textup{(}Sobolev\/\textup{)}-harmonizable.
\item If $Q \subset \Dp(\Om)$, then $\partial^Q \Om$ is Sobolev-resolutive.
\end{enumerate}
\end{thm}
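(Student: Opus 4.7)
The plan is to reduce the statement to the two displayed theorems above combined with the lattice version of the Stone--Weierstrass theorem. Throughout, for $f \in Q$ let $\hat f \in \Cc(\clOmQ)$ denote the continuous extension of $f$, which exists by the defining property of the $Q$-compactification.

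For part (a), the forward direction is immediate: if $\bdyQ\Om$ is (Sobolev)-resolutive, then each $\hat f|_{\bdyQ\Om}$ is (Sobolev)-resolutive, and the second displayed theorem above yields that $f=\hat f|_{\Om}$ is (Sobolev)-harmonizable. For the converse, assume every $f \in Q$ is (Sobolev)-harmonizable and set $R=\{\hat f|_{\bdyQ\Om} : f \in Q\}$. By the second theorem, every element of $R$ is (Sobolev)-resolutive. The set $R$ inherits the vector-lattice structure of $Q$, contains the constants, and separates the points of $\bdyQ\Om$ --- the latter being a standard property of the $Q$-compactification going back to Constantinescu--Cornea~\cite{CC}. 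Hence by the lattice version of Stone--Weierstrass, $R$ is uniformly dense in $\Cc(\bdyQ\Om)$. It then remains to invoke stability of (Sobolev)-resolutivity under uniform limits of boundary data --- which I expect to be established earlier in the paper alongside the basic properties of Perron and Sobolev--Perron solutions --- to upgrade (Sobolev)-resolutivity from $R$ to all of $\Cc(\bdyQ\Om)$.

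For part (b), the idea is to reduce to part (a) after enlarging $Q$. Let $\widetilde Q$ be the smallest vector lattice in $\Cbdd(\Om)$ containing $Q \cup \R$. Since $\Dp(\Om)$ is a vector space that contains the constants and is closed under the pointwise lattice operations $u\vee v$ and $u\wedge v$, we have $\widetilde Q \subset \Cbdd(\Om) \cap \Dp(\Om)$. Continuous extendability to $\clOmQ$ is preserved by these same operations, so every $f \in \widetilde Q$ extends continuously to $\clOmQ$; by the minimality characterization of $Q$-compactifications this forces $\bdy^{\widetilde Q}\Om = \bdyQ\Om$. Applying the first displayed theorem to each $\hat f$ with $f \in \widetilde Q$ shows that $\hat f|_{\bdyQ\Om}$ is Sobolev-resolutive, and then the second theorem gives that $f$ itself is Sobolev-harmonizable. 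Part (a), applied to $\widetilde Q$, then yields that $\bdyQ\Om = \bdy^{\widetilde Q}\Om$ is Sobolev-resolutive.

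The main obstacle is the stability of (Sobolev)-resolutivity under uniform convergence of boundary data invoked in part (a). Since the (Sobolev)-Perron operator is nonlinear, this is more delicate than in the classical linear setting: the typical route is to establish a comparison estimate of the form $\|S_{\Om^1} f - S_{\Om^1} g\|_{\infty} \leq \|f-g\|_{\infty}$ by adding admissible constants to upper and lower functions and comparing the classes defining the Perron solutions of $f$ and $f \pm \eps$. Granted such a lemma (and its Perron counterpart), the outline above completes the proof.
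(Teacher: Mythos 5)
Your proposal is correct and, for part~(a), follows essentially the same route as the paper: the paper's Theorem~\ref{thm-res-newt-res} uses the equivalence of harmonizability and resolutivity for continuous boundary data (Theorem~\ref{thm-cont-harm-res}), the Stone lattice density theorem (packaged as Theorem~\ref{thm-dense} via Theorem~\ref{thm-Stone}), and stability of (Sobolev)-resolutivity under uniform convergence. The "missing lemma" you flagged is indeed present in the paper as Proposition~\ref{prop-unif-Omm}, and its proof is exactly the constant-shifting argument you sketched, so there is no gap. One small note: the paper's Theorem~\ref{thm-res-newt-res} only needs $Q$ to be a sublattice with constants (the vector-space structure of $Q$ is never used), and likewise the density of $R$ only requires $R$ to be a sublattice with constants separating points, so the vector-lattice hypothesis you invoke is slightly stronger than necessary.

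For part~(b) you take a genuinely different, though still valid, route. You enlarge $Q$ to the smallest vector lattice $\widetilde Q \subset \Cbdd(\Om)\cap\Dp(\Om)$ containing $Q$ and the constants, verify that $\clOm^{\widetilde Q}\simeq\clOmQ$ (using closure of $\Dp(\Om)$ under lattice and linear operations, preservation of continuous extendability, and uniqueness of $Q$-compactifications), and then invoke part~(a). The paper instead goes in the opposite direction: it applies Theorem~\ref{thm-res-newt-res} together with Theorem~\ref{thm-Np-imp-harm} (which shows every $f\in\Dp(\Om)$ is Sobolev-harmonizable with $Wf=Zf=Hf$) to the largest such lattice $\Qh=\Cbdd(\Om)\cap\Dp(\Om)$, concludes $\bdy^{\Qh}\Om$ is Sobolev-resolutive, and then projects down using the monotonicity of resolutivity under the order $\prec$ (Proposition~\ref{prop-resolutive-prec}). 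The paper's argument is a bit more economical since it avoids having to verify that the enlargement preserves the compactification, but both approaches are sound.
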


The outline of the paper is as follows:
In Section~\ref{sect-compactifications} we discuss compactifications, and obtain
some results which will be needed in the sequel.
In Sections~\ref{sect-prelim}--\ref{sect-superharm} we introduce
the relevant background in nonlinear potential theory
on metric spaces, as well as the capacity $\bCp$.
Next, in Sections~\ref{sect-Perron} and~\ref{sect-Sobolev-res},
we turn to Perron and Sobolev--Perron solutions, 
while harmonizability, Wiener solutions  and their connections to Perron solutions
are studied in Sections~\ref{sect-harm} and~\ref{sect-Sobolev-harm}.

We end the paper with Section~\ref{sect-quasisemi} on 
Sobolev-resolutivity and Sobolev-har\-mon\-i\-za\-bi\-li\-ty for 
quasi(semi)continuous 
functions.
For these results it is essential that we use Sobolev--Perron and 
Sobolev--Wiener solutions.

\begin{ack}
The first two authors were supported by the Swedish Research Council.
\end{ack}

\section{Compactifications}
\label{sect-compactifications}

For a topological space $T$, we let
$\Cc(T)$ be the space of real-valued continuous functions,
$\Cbdd(T)$ be the space of bounded continuous functions,
and $\Cc_c(T)$ be the space of 
real-valued continuous functions with compact support in 
$T$, all equipped with the supremum norm,
and the induced topology.
We also let 
$\Cc(T,\eR)$ be the space of extended real-valued continuous functions,
where $\eR:=[-\infty,\infty]$ (with the usual topology).

\begin{deff}
Let $\Omega$ be a locally compact noncompact Hausdorff space with topology 
$\tilde{\tau}$. A couple $(\partial \Omega,\tau)$
is said to \emph{compactify} $\Omega$ if
$\partial \Omega$ is a set with $\partial \Omega \cap \Omega = \emptyset$
and $\tau$ is a Hausdorff topology on 
$\clOm:=\Omega \cup \partial \Omega$ such that
\begin{enumerate}
\item $\clOm$  is compact with respect to $ \tau$;
\item \label{b} 
$\Omega$ is dense in $\clOm$ with respect to $\tau$;
\item \label{c}
the topology induced on $\Omega$ by $\tau$  is $\tilde{\tau}$.
\end{enumerate}
The space $\clOm$ with the topology $\tau$ is 
a \emph{compactification} of $\Omega$.
\end{deff}

In this section $\bdy \Om$ will denote the boundary of an 
arbitrary compactification, 
while in later sections $\bdy \Om$ will be reserved for the 
given metric boundary.
We will denote the compactification by 
$\clOm:=\Omega \cup \partial \Omega$ and similarly 
$\clOmone:=\Omega \cup \bdyone \Omega$, 
$\clOmQ:=\Omega \cup \bdyQ \Omega$, etc.

Usually we will not specify $\tau$, but it should be clear from the context what it is.
We first show that $\Omega$ is automatically open in $\clOm$,
so we do not need to require this as an extra axiom.

\begin{lem}
Let $(\Omega,\tilde{\tau})$ be a locally compact noncompact Hausdorff space,
and let $\clOm$ with the topology $\tau$ be
a compactification of $\Omega$.
Then $\Om$ is $\tau$-open.
\end{lem}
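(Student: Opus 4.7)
The plan is to use the classical fact that a locally compact dense subspace of a Hausdorff space is always open. Since $\clOm$ is compact Hausdorff and $\Om$ sits inside it densely with its own locally compact Hausdorff topology agreeing with the subspace topology, this fact applies and gives the conclusion immediately. I would write out the short direct argument rather than quote the general theorem, because all the needed pieces are already at hand.

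Concretely, I would fix an arbitrary point $x \in \Om$ and show it is $\tau$-interior to $\Om$. By local compactness of $(\Om,\tilde\tau)$, there is a $\tilde\tau$-open neighborhood $V$ of $x$ whose $\tilde\tau$-closure $K:=\overline{V}^{\tilde\tau}$ is compact in $\Om$. Because the subspace topology on $\Om$ induced from $\tau$ is $\tilde\tau$ (axiom~(c)), the set $V$ is of the form $V=W\cap\Om$ for some $\tau$-open $W\subseteq\clOm$, and $K$ is $\tau$-compact as well. Since $\clOm$ is Hausdorff, $K$ is $\tau$-closed in $\clOm$, whence $\overline{V}^{\tau}\subseteq K\subseteq \Om$.

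The key step, using density (axiom~(b)), is to show $W\subseteq\overline{V}^{\tau}$. For any $y\in W$ and any $\tau$-open neighborhood $U$ of $y$, the set $U\cap W$ is nonempty and $\tau$-open, so by density it meets $\Om$; hence $U$ meets $W\cap\Om=V$. This shows $y\in\overline{V}^{\tau}$, so $W\subseteq\overline{V}^{\tau}\subseteq\Om$. Therefore $x\in W\subseteq\Om$ with $W$ being $\tau$-open, proving that $\Om$ is $\tau$-open.

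I do not expect any real obstacle: the only subtle point is verifying that compactness transfers between $\tilde\tau$ and $\tau$ (which is immediate since the two topologies agree on $\Om$) and that density forces $W\subseteq\overline{W\cap\Om}^{\tau}$, which is the usual one-line argument above. Everything else is unwinding the definitions of compactification.
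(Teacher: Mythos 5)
Your proof is correct and follows essentially the same route as the paper's: pick a relatively compact $\tilde\tau$-neighborhood of $x$, lift it to a $\tau$-open set $W$ via axiom~(c), and use density (axiom~(b)) plus Hausdorffness to trap $W$ inside the $\tau$-closure of $V$, which lies in $\Om$. The only cosmetic difference is that the paper argues by contradiction (supposing $W$ meets $\bdy\Om$) while you show $W\subseteq\overline{V}^{\tau}\subseteq\Om$ directly; the underlying mechanism is identical.
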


Recall that $E \Subset \Om$ if $\itoverline{E}$ is a compact subset of $\Om$.
Moreover, in this paper neighbourhoods are always open.
 
\begin{proof}
Let  $x \in \Om$. 
By the local compactness of $\Om$, there is a $\tilde{\tau}$-open 
$G \Subset \Omega$  containing $x$.
We shall show that $G$ is also $\tau$-open.
By compactness (and \ref{c}) the $\tilde{\tau}$-closure 
$\itoverline{G} \subset \Om$ of $G$ equals the $\tau$-closure of $G$.
By \ref{c}, there is a $\tau$-open set
$\Gh \subset \clOm$ such that $\Gh \cap \Om =G$.
If there were a point $y \in \Gh \cap \bdy \Om$, then by \ref{b},
any $\tau$-neighbourhood
$\Gt \subset \Gh$ of $y$ would contain a point $z \in \Om$.
But then $z \in \Gh \cap \Om = G$, and thus $y$ would belong
to $\itoverline{G} \subset \Om$,  a contradiction.
Hence $G=\Gh$ is $\tau$-open.
Since $x \in \Om$ was arbitrary it follows that $\Om$ is $\tau$-open.
\end{proof}

We will denote the set of boundaries compactifying  $\Omega$ by 
$\LL(\Omega)$.
A fundamental concept for us will be the natural order that $\LL(\Omega)$ carries.
It is defined as follows: 
For two boundaries $\partial^1 \Omega$ and $\partial^2 \Omega$ 
in $\LL(\Omega)$ we define 
\[
\partial^1 \Omega \prec \partial^2 \Omega
\]
to mean that there is a continuous mapping
\[
 \Phi: \clOmtwo \longrightarrow \clOmone
\quad \text{with } \Phi|_\Om=\id.
\]
(This order is rather  between compactifications, 
but for notational convenience 
we let $\LL(\Omega)$ denote the boundaries instead of the actual compactifications.) 
Note that $\partial^1 \Omega \prec \partial^2 \Omega \prec \partial^1 \Omega$ 
if and only if 
$\clOmone \simeq \clOmtwo$ 
(with $\Om \overset{\rm id}\longrightarrow \Om$
and where $\simeq$ denotes homeomorphism).
This is so because the continuous mapping 
$\clOmone \to \clOmtwo$
must equal $\Phi^{-1}$,
by the denseness of $\Om$ in both compactifications.
We will usually consider homeomorphic compactifications as identical.

The most important tool for studying the space $\LL(\Omega)$ is the method of constructing compactifications by making embeddings into product spaces. 
The idea goes back to Tikhonov~\cite{Tikhonov1930},
but the theorem on existence and uniqueness of  
$Q$-compactifications given below is from Constantinescu--Cornea~\cite{CC} (see also Brelot~\cite[Theorem~XIII]{Brelot1}).
For the reader's convenience and to set the notation and terminology, 
we provide a complete proof of the existence result, which more or less 
follows~\cite{CC}.

\begin{deff}  \label{def-Q-comp}
For $Q \subset \Cc (\Omega,\eR)$ we say that a compactification 
$\clOm$ is a $Q$-\emph{com\-pact\-i\-fi\-ca\-tion} of $\Omega$ if
\begin{enumerate}
\item  \label{it-def-comp-f-hat}
for every $f \in Q$ there is 
$\fh \in \Cc (\clOm,\eR)$ 
such that $\fh|_{\Omega} =f$;
\item   \label{it-def-comp-separate}
the functions $\{\fh:f\in Q\}$ separate the points of $\partial \Omega$.
\end{enumerate}
\end{deff}

Note that any element in $\Cc_c(\Omega)$ extends as zero on the boundary of
every compactification, 
so we may always add  $\Cc_c(\Omega)$ to $Q$ without changing anything.
Similarly, constant functions can be added without change.

We should first realize that each compactification is a $Q$-compactification 
for some suitable $Q \subset \Cc(\Om,\eR)$.

\begin{lem} \label{lem-CC-clOm}
Let $\clOm$ be a compactification of $\Om$ and let $Q$ be a dense subset of
$\{f|_\Omega: f \in \Cc (\clOm)\}$ 
\textup{(}with respect to the supremum norm\/\textup{)}.
Then $\clOm$ is a $Q$-compactification of $\Om$.
\end{lem}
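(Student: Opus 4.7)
The plan is to verify the two conditions in Definition~\ref{def-Q-comp} directly.

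For condition~\ref{it-def-comp-f-hat}, I would simply unfold the hypothesis: every $f \in Q$ arises as $g|_\Om$ for some $g \in \Cc(\clOm)$, and then $\fh := g \in \Cc(\clOm) \subset \Cc(\clOm,\eR)$ is the desired continuous extension. No issue here, since density of $\Om$ in $\clOm$ together with the Hausdorff property of $\clOm$ makes such an extension unique, so $\fh$ is well defined from $f$ alone.

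For condition~\ref{it-def-comp-separate}, the plan is to combine Urysohn's lemma with a uniform approximation argument. Given distinct points $x_1, x_2 \in \bdy \Om$, the space $\clOm$ is compact Hausdorff, hence normal, so Urysohn's lemma furnishes some $g \in \Cc(\clOm)$ with $g(x_1)\ne g(x_2)$. Set $\eps := \tfrac14|g(x_1)-g(x_2)|>0$. By density of $Q$ in $\{f|_\Om : f \in \Cc(\clOm)\}$ with respect to the supremum norm, pick $f \in Q$ with $\|f - g|_\Om\|_\infty < \eps$. Let $\fh$ be its extension from condition~\ref{it-def-comp-f-hat}. The function $\fh - g$ is continuous on $\clOm$ and on the dense subset $\Om$ is bounded in absolute value by $\eps$; by continuity this bound propagates to all of $\clOm$. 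Consequently
\[
|\fh(x_1)-\fh(x_2)| \ge |g(x_1)-g(x_2)| - 2\eps = 2\eps > 0,
\]
so $\fh(x_1) \ne \fh(x_2)$, and $\{\fh : f \in Q\}$ separates $x_1$ from $x_2$.

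There is no real obstacle here; the only subtle point worth spelling out is the passage from a uniform bound on $\Om$ to a uniform bound on $\clOm$, which relies on denseness of $\Om$ together with continuity of $\fh-g$ on the compactification. The result is essentially a direct translation of the definition combined with Urysohn, and the density assumption on $Q$ is exactly what ensures that the separation provided by all of $\Cc(\clOm)$ is inherited by the smaller family $\{\fh:f\in Q\}$.
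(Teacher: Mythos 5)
Your proof is correct and follows essentially the same route as the paper: invoke compact-Hausdorff-implies-normal to produce a separating function in $\Cc(\clOm)$ (the paper cites Tietze's extension theorem where you cite Urysohn's lemma, but these are the same tool here), then use density of $Q$ to transfer the separation. You spell out the $\eps$-approximation step and the extension of the sup-norm bound from $\Om$ to $\clOm$ more explicitly than the paper, which simply asserts "by the density of $Q$ there must also be a function $v \in Q$ which separates these points," but the underlying argument is identical.
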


\begin{proof}
For distinct $x,y \in \bdy \Om$, 
the function 
$u=\chi_{\{x\}}$
 is continuous on $\{x,y\}$.
As $\clOm$ is a compact Hausdorff space it is normal
(see e.g.\ Munkres~\cite[Theorem~32.3]{munkres2}).
Thus Tietze's extension theorem (see e.g.\ \cite[Theorem~35.1]{munkres2})
shows that there is a $\ut \in \Cc(\clOm)$ such 
that $\ut|_{\{x,y\}}=u$, and hence $\ut$ separates $x$ and $y$. By the density of $Q$ there must also be a function $v \in Q$ which separates these points.

Condition~\ref{it-def-comp-f-hat} is directly fulfilled.
\end{proof}

\begin{lem}\label{lem-ImProd}
Let $\clOm$ be a compactification of $\Om$, and assume that
$Q \subset \Cc(\clOm,\eR)$ separates the points of $\clOm$. 
Also let $I_f=\eR$ for each $f \in Q$ and define
\[
\phi: \clOm \longrightarrow \prod_{f \in Q} I_f
\quad \text{by } 
\phi(x) := \{f(x)\}_{f\in Q} \text{ for } x \in \clOm,
\]
where $\prod_{f \in Q} I_f$ is equipped with
the product topology. If we let $K= \phi(\clOm)$ then $\phi$, seen as a map from $\clOm$ to $K$, is a homeomorphism and the set $\phi(\Om)$ is an open dense subset of $K$. 
\end{lem}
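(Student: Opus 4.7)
The plan is a direct application of the standard machinery of the product topology together with the principle that a continuous bijection from a compact space onto a Hausdorff space is a homeomorphism. First, continuity of $\phi: \clOm \to \prod_{f \in Q} I_f$ follows from the universal property of the product topology: it suffices that $\pi_f \circ \phi = f$ is continuous for every coordinate projection $\pi_f$, which holds because $Q \subset \Cc(\clOm,\eR)$. Second, $\phi$ is injective since $Q$ separates points of $\clOm$; for distinct $x, y \in \clOm$ we pick $f \in Q$ with $f(x) \ne f(y)$, and then the $f$-coordinate of $\phi(x)$ already differs from that of $\phi(y)$.

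To conclude that $\phi$ is a homeomorphism onto $K = \phi(\clOm)$, I would invoke the classical fact that a continuous bijection from a compact space to a Hausdorff space is automatically a closed map, hence a homeomorphism. Here $\clOm$ is $\tau$-compact by axiom~(a) of a compactification, each factor $I_f = \eR$ is Hausdorff, so the product is Hausdorff in the product topology, and therefore so is $K$ with its subspace topology.

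It remains to analyze $\phi(\Om)$. Openness is immediate: by the preceding lemma $\Om$ is $\tau$-open in $\clOm$, so by the homeomorphism just established $\phi(\Om)$ is open in $K$. Density is equally easy: if $U \subset K$ is nonempty and open, then $\phi^{-1}(U) \subset \clOm$ is nonempty and open, so by the denseness of $\Om$ in $\clOm$ it meets $\Om$, whence $U$ meets $\phi(\Om)$. The argument is essentially bookkeeping; the only ingredient that might warrant a second glance is the Hausdorffness of the product (which is automatic since $\eR$ is Hausdorff) and the fact that the openness of $\Om$ in $\clOm$ has to be cited from the previous lemma rather than read off the axioms. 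Accordingly I do not anticipate any genuine obstacle.
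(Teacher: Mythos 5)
Your proof is correct and follows essentially the same route as the paper's: continuity via the universal property of the product topology, injectivity from separation of points, homeomorphism via the compact-to-Hausdorff continuous-bijection principle, and then openness and density of $\phi(\Om)$ transferred through the homeomorphism. The only cosmetic difference is that you explicitly invoke Hausdorffness of the target where the paper more tersely notes compactness of both $\clOm$ and $K$, and you spell out the dense-image argument that the paper simply calls ``trivial''; both are the same fact.
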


\begin{proof} Let $\pi_f :  \prod_{f \in Q} I_f \rightarrow I_f$ denote the projection onto the $f$-th coordinate.
Since 
$\pi_f \circ \phi = f$ for each $f \in Q$ it follows that $\phi$ is continuous 
(this property is what characterizes the product topology), 
and since $Q$ separates the points in $\clOm$ we conclude that $\phi$
is also injective (so it is a continuous bijection between $\clOm$ and $K$). 
Since both $\clOm$ and $K$ are compact it follows that $\phi$ is a homeomorphism. 
In particular, $\phi$ is an open map, so $\phi(\Om)$ is an open subset of $K$. 
Since homeomorphisms also trivially map dense subsets to dense subsets 
the result follows.
\end{proof}

\begin{prop}\label{prop-ImProd}
If $Q_1 \subset Q_2 \subset \Cc(\Om,\eR)$ and $\clOm^1$ and $\clOm^2$ 
are $Q_1$- and $Q_2$-com\-pact\-i\-fi\-ca\-tions of $\Om$, respectively. 
Then $\partial^1 \Om \prec \partial^2 \Om$.

In particular, if $Q_1=Q_2$ then $\clOm^1 \simeq \clOm^2$, i.e.\ 
$Q$-compactifications are unique up to homeomorphism.
\end{prop}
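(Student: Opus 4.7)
The plan is to embed both compactifications into appropriate product spaces via Lemma~\ref{lem-ImProd} and then use a coordinate projection. A small preliminary is needed: Lemma~\ref{lem-ImProd} requires a family that separates all of $\clOm^i$, whereas a $Q_i$-compactification only guarantees that the extensions of $Q_i$ separate $\bdy^i \Om$. To fix this, I would augment both families to $Q_i' := Q_i \cup \Cc_c(\Om)$. As remarked in the paper, functions in $\Cc_c(\Om)$ extend by zero on the boundary of any compactification, so $\clOm^i$ is still a $Q_i'$-compactification. Since $\Om$ is locally compact Hausdorff, compactly supported bump functions separate points of $\Om$ from each other and from $\bdy^i \Om$, so the augmented family $Q_i'$ separates the whole space $\clOm^i$.

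Applying Lemma~\ref{lem-ImProd} to each $\clOm^i$ and $Q_i'$ then yields homeomorphisms $\phi_i : \clOm^i \to K_i$ onto $K_i := \phi_i(\clOm^i) \subset \prod_{f \in Q_i'} \eR$, given by $\phi_i(x) = \{f(x)\}_{f \in Q_i'}$, and with $\phi_i(\Om)$ dense in $K_i$. Since $Q_1' \subset Q_2'$, there is a natural continuous coordinate projection
\[
\pi : \prod_{f \in Q_2'} \eR \longrightarrow \prod_{f \in Q_1'} \eR
\]
that forgets the coordinates indexed by $Q_2' \setminus Q_1'$.

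A key intermediate step is to verify that $\pi(K_2) \subset K_1$. For every $x \in \Om$ one reads off directly that $\pi(\phi_2(x)) = \phi_1(x) \in K_1$, so $\pi(\phi_2(\Om)) \subset K_1$. Since $\phi_2(\Om)$ is dense in $K_2$, $\pi$ is continuous, and $K_1$ is compact and hence closed in the product, passing to closures yields $\pi(K_2) \subset K_1$. One can then define
\[
\Phi := \phi_1^{-1} \circ \pi \circ \phi_2 : \clOm^2 \longrightarrow \clOm^1,
\]
which is continuous as a composition of continuous maps, and equals the identity on $\Om$ by construction. This establishes $\bdy^1 \Om \prec \bdy^2 \Om$.

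For the uniqueness statement, when $Q_1 = Q_2$ the same argument applied with the roles of $\clOm^1$ and $\clOm^2$ exchanged also yields $\bdy^2 \Om \prec \bdy^1 \Om$, and the observation made right after the definition of the order $\prec$ then gives $\clOm^1 \simeq \clOm^2$. The main obstacle here is really the bookkeeping around point-separation; once $Q_i'$ is seen to separate all of $\clOm^i$, the product-space machinery of Lemma~\ref{lem-ImProd} does almost all of the work, and only the density/closure argument for $\pi(K_2) \subset K_1$ remains to be checked.
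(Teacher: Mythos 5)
Your proof is correct and follows essentially the same route as the paper's: augmenting to $Q_i' = Q_i \cup \Cc_c(\Om)$, applying Lemma~\ref{lem-ImProd} to embed each $\clOm^i$ in $\prod_{f\in Q_i'} I_f$, and using the continuous coordinate projection. You simply spell out details the paper leaves implicit (why $Q_i'$ separates all of $\clOm^i$, the density-and-closedness argument showing $\pi(K_2)\subset K_1$, and the explicit definition of $\Phi$), all of which are accurate.
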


\begin{proof}
If we let $Q_i'= Q_i \cup \Cc_c(\Om)$, and regard all these functions as extended to the whole compact space $\clOm^i$,  
then according to Lemma~\ref{lem-ImProd} we see that the map
\[
\phi_i: \clOm^i \longrightarrow \prod_{f \in Q_i'} I_f
\quad \text{defined by } 
\phi_i(x) := \{f(x)\}_{f\in Q_i'} \text{ for } x \in \clOm^i,
\]
is a homeomorphism from $\clOm^i$ to 
$\phi_i(\clOm^i)\subset\prod_{f \in Q_i'} I_f$. 
As
$Q_1' \subset Q_2'$, the result is an immediate consequence of the fact that the mapping from $\prod_{f \in Q_2'} I_f$ to 
$\prod_{f \in Q'_1} I_f$, defined by
\[
  \{y_f\}_{f \in Q_2'} \longmapsto \{y_f\}_{f \in Q'_1},
\]
is continuous.
\end{proof}

We need to prove the existence of $Q$-compactifications. 

\begin{thm} \label{thm-exQcom2}
For $Q \subset \Cc (\Omega,\eR)$ there is a $Q$-compactification 
$\clOmQ=\Om \cup \bdyQ \Om$ of $\Omega$. 
\end{thm}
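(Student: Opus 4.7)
The plan is to embed $\Om$ into a compact product space via evaluation and take the closure, following the classical Tikhonov approach that underlies Lemma~\ref{lem-ImProd}. I would first enlarge $Q$ to $Q' := Q \cup \Cc_c(\Om)$; by the remark following Definition~\ref{def-Q-comp} these extra functions will not affect whether the resulting space is a $Q$-compactification. Their role is to exploit Urysohn's lemma in the locally compact Hausdorff space $\Om$ to separate points and to separate points from closed sets.

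Next, let $K_0 := \prod_{f \in Q'} \eR$ with the product topology; this is compact Hausdorff by Tychonoff's theorem. Define
\[
  \phi: \Om \longrightarrow K_0, \qquad \phi(x) := \{f(x)\}_{f \in Q'}.
\]
Continuity of $\phi$ follows from the universal property of the product topology since $\pi_f \circ \phi = f$ for each $f \in Q'$. For injectivity and for $\phi$ being an open map onto $\phi(\Om)$, I would use compactly supported Urysohn bumps: given distinct $x, y \in \Om$, or an open neighbourhood $U \ni x$ inside a relatively compact set, pick $g \in \Cc_c(\Om)$ with $g(x) = 1$ and $\supp g \subset U$; then the basic open set $\pi_g^{-1}((\tfrac{1}{2}, \infty))$ in $K_0$ delivers both the required separation and the openness of $\phi$ onto its image. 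Hence $\phi$ is a homeomorphism onto $\phi(\Om)$. I would then set $\clOmQ := \itoverline{\phi(\Om)}$ in $K_0$, identify $\Om$ with $\phi(\Om)$ via $\phi$, and declare $\bdyQ \Om := \clOmQ \setminus \Om$. Compactness, denseness, and the correct induced topology on $\Om$ are built into the construction, and condition~\ref{it-def-comp-f-hat} of Definition~\ref{def-Q-comp} comes for free: every $f \in Q$ extends as the restriction of the coordinate projection $\pi_f$ to $\clOmQ$.

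The main obstacle is verifying the separation condition~\ref{it-def-comp-separate} for $Q$ itself rather than for the enlarged set $Q'$. Distinct boundary points $x, y \in \bdyQ \Om$ are separated by some coordinate $f \in Q'$ because $K_0$ is Hausdorff, but I need to rule out the possibility that every such separating $f$ lies in $\Cc_c(\Om)$. The decisive observation is that for any $g \in \Cc_c(\Om)$ the extension $\pi_g|_{\clOmQ}$ vanishes identically on $\bdyQ \Om$: if a net $\phi(x_\alpha) \to y \in \bdyQ \Om$, then $(x_\alpha)$ must eventually leave every compact subset of $\Om$, since otherwise a subnet would lie in some compact $L \subset \Om$, have a cluster point $x_0 \in L$, and force $y = \phi(x_0) \in \phi(\Om)$ by continuity of $\phi$; consequently $g(x_\alpha) = 0$ eventually and $\pi_g(y) = 0$. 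Hence any coordinate separating two distinct points of $\bdyQ \Om$ must come from $Q$, which completes the verification that $\clOmQ$ is a genuine $Q$-compactification of $\Om$.
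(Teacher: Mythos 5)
Your proof is correct and follows essentially the same route as the paper's: enlarge $Q$ to $Q' = Q \cup \Cc_c(\Om)$, embed $\Om$ into $\prod_{f\in Q'}\eR$ by evaluation, show this is an open homeomorphism onto its image, and take the closure. The only minor differences are that you establish openness of $\phi$ more directly by intersecting basic open sets with $\phi(\Om)$ (whereas the paper shows the set $V$ is actually contained in and open in $\itoverline{R}$ using density and compactness), and you spell out more explicitly than the paper why the $\Cc_c(\Om)$-coordinates cannot separate boundary points, since they extend by zero there.
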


Together with Proposition~\ref{prop-ImProd} this shows that
the $Q$-compactification $\clOmQ$ exists and is unique (up to 
homemorphism).

\begin{proof}
As suggested by Lemma~\ref{lem-ImProd} 
the construction builds on embedding $\Omega$ into the product space
\[
   \prod_{f \in Q'} I_f,
\]
where $Q'= Q \cup \Cc_c(\Omega)$ 
and $I_f = \eR$ for each $f \in Q'$. 
We give this space the product topology, and we let  $\pi_f$ denote the projection onto the $f$-th coordinate. 
Let 
\[
 \psi : \Omega \longrightarrow \prod_{f \in Q'} I_f,
\quad \text{where }
\psi(x)= \{f(x)\}_{f \in Q'} \text{ for } x \in \Om.
\]
Since 
$\pi_f \circ \psi = f$ for each $f \in Q'$ it follows that $\psi$ is continuous,
 and as $Q'$ separates the points in $\Omega$ it is also injective.
Set 
\[
  R= \psi(\Omega).
\]
If we prove that the map $\psi$ is open then it follows that, 
seen as a map from $\Omega$ to $R$, it is a homeomorphism. 
To do so, let $y\in\psi(G)$, where $G \Subset \Omega$ is open.
Then $y=\{f(x)\}_{f\in Q'}$ for some $x \in G$.
Choose $g \in \Cc_c(\Omega)$ with $\supp g \subset G$ and $g(x) \ne 0$.
If we put 
\[
V=\{z \in \itoverline{R} : \pi_g(z) \ne 0\},
\]
then $V$ is relatively open in $\itoverline{R}$, 
and so is $V \setminus \psi(\itoverline{G})$, by the compactness of 
$\psi(\itoverline{G})$.
Thus, $V \setminus \psi(\itoverline{G})$ must either intersect $R$ or be empty,
because of the density of $R$ in $\itoverline{R}$.
Since $(V \setminus \psi(\itoverline{G}))\cap R = \emptyset$, 
by the choice of $g$, we see that 
$V \subset \psi(\itoverline{G}) \subset R$, and $V$ is open in $R$.
As $g =0$ on $\partial G$ we conclude  that $V \subset \psi(G)$. 
This proves that $\psi$ is an open mapping, which (together with the continuity 
and bijectivity of $\psi: \Omega \rightarrow R$) yields 
that it is a homeomorphism.

We now identify $\Omega$ with $R$ and let 
$\partial^Q \Omega := \itoverline{R} \setminus R$, 
where the closure is with respect to $\prod_{f \in Q'} I_f$.
We recall that this identifies $f \in Q$ with $\pi_f$ in the following sense: 
Any $y \in R$ is by construction of the form $y= \{g(x)\}_{g \in Q'}$ for a unique $x \in \Omega$ (that is $y$ is the element we identify $x$ with in the product space),
and the projection $\pi_f(y)$ is the 
$f$-th coordinate of $y$, i.e.\ $\pi_f(y)=f(x)$. 
Or what amounts to the same thing, $\pi_f \circ \psi =f$. 
Thus, we may set $\hat{f}:=\pi_f$ in the notation of 
Definition~\ref{def-Q-comp}.
Since the projections $\pi_f$ are continuous on $\itoverline{R}$, 
and also separate the points of $\partial^Q \Omega$, 
it follows that $\itoverline{R}$ is a $Q$-compactification of $\Omega$.
\end{proof}

The theorem above is a very convenient tool for introducing the 
lattice structure on $\LL(\Omega)$.
For a family 
$\{\partial^i \Omega\}_{i \in I} \subset \LL(\Omega)$
we first put
$Q_i = \{f|_\Omega : f \in \Cc (\clOmi)\}$,
and then note that 
$\clOmi \simeq \clOmQi$, by Lemma~\ref{lem-CC-clOm}.
To introduce the least upper bound with respect to $\prec$ of the family 
$\{\partial^i \Omega\}_{i \in I}$ we put
\[
   Q= \bigcup_{i \in I} Q_i.
\]
It is easy to see that $\partial^Q \Omega$ is the least upper bound in the order 
$\prec$, because any boundary 
larger than each $\partial^i \Omega$ 
must by definition have the property that each element in $Q$ has a 
continuous extension to this boundary. 

Similarly $\partial^{\widetilde{Q}} \Omega$,
where $\widetilde{Q}= \bigcap_{i \in I} Q_i$,  is the 
greatest lower bound of the family $\{\partial^i \Omega\}_{i \in I}$.
Note that the least upper and the greatest lower bounds are only defined 
up to homeomorphism, and the above construction gives canonical representatives. 

\begin{example} \label{ex-compactification}
If we take $Q=\emptyset$, 
then the $Q$-compactification of $\Omega$ is simply the 
\emph{one-point compactification}. 
This is hence the least element in $\LL(\Omega)$. 
This also explains why \ref{it-def-comp-separate} in 
Definition~\ref{def-Q-comp} is only required for points in $\bdry\Om$.

At the other extreme we may take $Q=\Cc(\Omega)$. Then the $Q$-compactification of 
$\Omega$ is the \emph{Stone--\v{C}ech compactification}. 
This is the largest element in $\LL(\Omega)$.

Later on we will look at the role of compactifications in relation to the Dirichlet problem in potential theory. 
In this situation the first boundary is too small to be of any real interest, 
since the only functions on the boundary are constant. 
Typically when working in potential theory one wants a boundary 
which is resolutive (roughly speaking for which continuous functions on the 
boundary have well defined solutions to the Dirichlet problem). 
The Stone--\v{C}ech compactification is too large in general 
to satisfy this.
\end{example}

The following lemma makes it possible to further reduce the set
of functions defining a compactification.

\begin{lem} \label{lem-Q-dense}
Assume that $Q_1$ is dense in $Q_2\supset Q_1$.
Then $\clOm^{Q_1} \simeq \clOm^{Q_2}$.
\end{lem}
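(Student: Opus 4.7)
The plan is to use Proposition~\ref{prop-ImProd} together with a uniform approximation argument. Since $Q_1 \subset Q_2$, Proposition~\ref{prop-ImProd} immediately gives $\bdry^{Q_1}\Om \prec \bdry^{Q_2}\Om$, so the real content is to show that $\clOm^{Q_1}$ is itself a $Q_2$-compactification of $\Om$; the uniqueness part of Proposition~\ref{prop-ImProd} then yields the claimed homeomorphism.

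First I would verify condition~\ref{it-def-comp-f-hat} of Definition~\ref{def-Q-comp} for $Q_2$ on $\clOm^{Q_1}$. Given $f \in Q_2$, density (in the supremum norm) provides a sequence $f_n \in Q_1$ with $f_n \to f$ uniformly on $\Om$. Each $f_n$, being in $Q_1$, has a continuous extension $\fh_n \in \Cc(\clOm^{Q_1},\eR)$. Because $\Om$ is dense in $\clOm^{Q_1}$ and each $\fh_n$ is continuous on the compact space $\clOm^{Q_1}$, one has $\sup_{\clOm^{Q_1}}|\fh_n-\fh_m| = \sup_{\Om}|f_n-f_m|$, so $(\fh_n)$ is uniformly Cauchy on $\clOm^{Q_1}$ and converges uniformly to some $\fh \in \Cc(\clOm^{Q_1})$ with $\fh|_\Om = f$. (Uniform convergence on $\Om$ forces the $f_n$, and hence the $\fh_n$, to be eventually uniformly bounded, so there is no trouble with infinite values.)

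Next I would check condition~\ref{it-def-comp-separate}. The family $\{\fh : f \in Q_2\}$ contains $\{\fh : f \in Q_1\}$, which by hypothesis already separates the points of $\bdry^{Q_1}\Om$. Hence $\clOm^{Q_1}$ satisfies both conditions in Definition~\ref{def-Q-comp} for $Q_2$, and therefore qualifies as a $Q_2$-compactification. The uniqueness statement in Proposition~\ref{prop-ImProd} then yields $\clOm^{Q_1} \simeq \clOm^{Q_2}$.

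The only subtle point, and the one I would be most careful about, is ensuring that the uniform limit produces a genuine continuous extension and not merely a pointwise one on $\bdry^{Q_1}\Om$. The argument above handles this because denseness of $\Om$ in $\clOm^{Q_1}$ transfers the uniform Cauchy property of $(f_n)$ on $\Om$ to the extensions $(\fh_n)$ on the whole of $\clOm^{Q_1}$. No further structure on $Q_1, Q_2$ is needed beyond the fact that sup-norm density is a well-posed statement, which implicitly forces these to be considered as families of bounded continuous functions on $\Om$.
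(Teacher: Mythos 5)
Your proof is correct, but it takes the dual route to the one in the paper. The paper shows that $\clOm^{Q_2}$ is a $Q_1$-compactification: condition~\ref{it-def-comp-f-hat} of Definition~\ref{def-Q-comp} is then immediate from $Q_1\subset Q_2$, and the only work is to verify condition~\ref{it-def-comp-separate}, i.e.\ that the extensions $\{\fh: f\in Q_1\}$ still separate the points of $\bdy^{Q_2}\Om$; this is done in one line by choosing $f_1\in Q_1$ with $\|f_1-f_2\|<\tfrac14|\fh_2(y)-\fh_2(z)|$. You go the other way, showing $\clOm^{Q_1}$ is a $Q_2$-compactification: for you the separation condition is immediate, and the work lies in the extension condition, handled by a uniform Cauchy argument transferred from $\Om$ to $\clOm^{Q_1}$ via density and compactness. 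Both then conclude by the uniqueness part of Proposition~\ref{prop-ImProd}, applied with $Q_1$ in the paper and with $Q_2$ in your version. The two arguments are mirror images and of comparable length; the paper's is marginally quicker because a separation check is a single inequality, whereas your extension step needs the (correct, and duly flagged) observation that $\sup_{\clOm^{Q_1}}|\fh_n-\fh_m|=\sup_\Om|f_n-f_m|$ and that boundedness propagates to the closure, to rule out infinite values. Your parenthetical remark that sup-norm density implicitly restricts to bounded functions is also a fair reading of the hypothesis, and matches how the lemma is applied in the paper.
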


\begin{proof}
In view of Proposition~\ref{prop-ImProd} it suffices to show that $\clOm^{Q_2}$
is a $Q_1$-compactification of $\Om$.
If $f\in Q_1\subset Q_2$ then, by definition, there exists 
$\fh\in\Cc(\clOmQtwo)$
such that $\fh|_\Om=f$,
which verifies \ref{it-def-comp-f-hat}
of Definition~\ref{def-Q-comp}.

To show that the functions $\{\fh:f\in Q_1\}$ separate the points of 
$\bdy^{Q_2}\Om$, let $y,z\in\bdy^{Q_2}\Om$ be arbitrary and find $f_2\in Q_2$
such that $\fh_2(y) \ne \fh_2(z)$.
By denseness, there exists $f_1\in Q_1$ such that 
$\|f_1-f_2\|<\tfrac14 |\fh_2(y)-\fh_2(z)|$ 
(in the supremum norm),
which implies that $\fh_1(y)\ne\fh_1(z)$.
\end{proof}

The following theorem characterizes the metrizable compactifications. 
This is probably well known to the experts in the
field, but as we have not been able to find a reference we include it here.

Recall that a compact metric space is totally bounded and hence separable. 
We note that any second countable space is automatically separable. 
The converse is not true in general, 
but it is true for metric spaces 
(see e.g.\ Kuratowski~\cite[Theorem~2, p.~177]{kuratowski}).
Thus,
for a metrizable compactification to exist it is necessary that $\Omega$ 
is second countable.

\begin{thm} \label{thm-metrizQcom}
Assume that $\Omega$ is a locally compact noncompact second countable Hausdorff space.
For a compactification $\clOm$ of $\Omega$ the following are equivalent:
\begin{enumerate}
\item  \label{it-metrizable}
$\clOm$ is metrizable\/\textup{;}
\item  \label{it-second-countable}
$\clOm$ is second countable\/\textup{;}
\item  \label{it-C-second-countable}
$\Cc(\clOm)$ is second countable\/\textup{;}
\item  \label{it-C-sep}
$\Cc(\clOm)$ is separable\/\textup{;}
\item \label{it-Qcount}
there is a countable set $Q \subset \Cc(\Om)$ such that $\clOm \simeq \clOm^Q$. 
\end{enumerate}
\end{thm}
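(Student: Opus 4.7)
The plan is to establish the cycle $(\mathrm{a}) \Rightarrow (\mathrm{b}) \Rightarrow (\mathrm{c}) \Rightarrow (\mathrm{d}) \Rightarrow (\mathrm{e}) \Rightarrow (\mathrm{a})$ using standard tools from general topology together with Lemmas~\ref{lem-CC-clOm} and~\ref{lem-Q-dense}.

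The first three implications will be essentially routine. For $(\mathrm{a}) \Rightarrow (\mathrm{b})$, I would use that a compact metric space is totally bounded, so countably many finite $1/n$-nets yield a countable base. For $(\mathrm{b}) \Rightarrow (\mathrm{c})$, the compact Hausdorff space $\clOm$ is regular, so Urysohn's metrization theorem applies; Stone--Weierstrass applied to the countable rational algebra generated by the constants and the distance functions $d(\cdot,x_n)$ from a countable dense set $\{x_n\}\subset\clOm$ then shows that the Banach space $\Cc(\clOm)$ is separable, and in metric spaces separability coincides with second countability. The step $(\mathrm{c}) \Rightarrow (\mathrm{d})$ is immediate.

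For $(\mathrm{d}) \Rightarrow (\mathrm{e})$, the density of $\Om$ in $\clOm$ makes restriction $f\mapsto f|_\Om$ an isometry from $\Cc(\clOm)$ onto $\{f|_\Om : f \in \Cc(\clOm)\}$, so the image is separable; a countable dense subset $Q$ therein satisfies the hypothesis of Lemma~\ref{lem-CC-clOm}, which yields $\clOm \simeq \clOmQ$.

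The substantial step is $(\mathrm{e}) \Rightarrow (\mathrm{a})$. Assuming $\clOm \simeq \clOmQ$ for a countable $Q$, the strategy is to embed $\clOm$ homeomorphically into the metrizable product $\eR^{\mathbf{N}}$ via a countable family of continuous functions on $\clOm$ that separates its points. Since $\Om$ is locally compact Hausdorff and second countable, its one-point compactification is itself compact Hausdorff and second countable (using the $\sigma$-compactness of $\Om$), hence metrizable by Urysohn, so $\Cc_0(\Om)$ is a separable Banach space; I fix a countable dense $C\subset\Cc_0(\Om)$. Every $f\in\Cc_0(\Om)$ extends by zero to a continuous $\fh$ on any compactification of $\Om$, because for each $\eps>0$ the set $\{|f|\ge\eps\}$ is compact in $\Om$ and hence closed in $\clOm$, which forces continuity of the zero-extension at every boundary point. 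Combined with the extensions provided by the $Q$-com\-pact\-i\-fi\-ca\-tion property, this produces a countable subfamily $\{\fh:f\in Q\cup C\}\subset\Cc(\clOm)$. The main technical hurdle is to verify that this family separates the points of $\clOm$: on $\bdyQ\Om$ the extensions of $Q$ suffice by the defining property of $\clOmQ$, whereas for the remaining cases I would fix a Urysohn bump $g\in\Cc_c(\Om)$ with $g(x)=1$ and $g$ vanishing near $y$ when $y\in\Om$, approximate by $f\in C$ with $\|f-g\|_\infty<\tfrac13$, and conclude that $\fh(x)>\tfrac23$ while $\fh(y)$ equals $0$ (if $y\in\bdyQ\Om$) or has absolute value less than $\tfrac13$ (if $y\in\Om$). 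Once separation is established, the map $\Phi\colon\clOm\to\eR^{\mathbf{N}}$, $z\mapsto(\fh(z))_{f\in Q\cup C}$, will be a continuous injection from a compact Hausdorff space into a Hausdorff space, hence a homeomorphism onto its image, and $\clOm$ becomes metrizable.
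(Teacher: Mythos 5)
Your proposal is correct and follows essentially the same approach as the paper; the cycle $(\mathrm{a})\Rightarrow(\mathrm{b})\Rightarrow(\mathrm{c})\Rightarrow(\mathrm{d})\Rightarrow(\mathrm{e})\Rightarrow(\mathrm{a})$ closes using exactly the same tools (Stone--Weierstrass, the $Q$-compactification Lemmas~\ref{lem-CC-clOm} and~\ref{lem-ImProd}, and embedding into a countable product). The only noteworthy cosmetic differences: for $(\mathrm{b})\Rightarrow(\mathrm{c})$ you first invoke Urysohn's metrization theorem and then use distance functions, whereas the paper proves $(\mathrm{b})\Rightarrow(\mathrm{d})$ directly via Tietze's extension theorem to produce a countable separating algebra without passing through metrizability; and for $(\mathrm{e})\Rightarrow(\mathrm{a})$ you work with $\Cc_0(\Om)$ and a hand-built compact-to-Hausdorff embedding, whereas the paper takes a countable dense subset of $\Cc_c(\Om)$ and cites Lemma~\ref{lem-ImProd} directly --- both are sound and amount to the same argument.
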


The above result can be compared to the Urysohn metrization theorem which states 
that any second countable regular Hausdorff space
(in particular any locally compact second countable Hausdorff space)
is metrizable,
see e.g.\  Munkres~\cite[Theorem~34.1]{munkres2}
and Kuratowski~\cite[Theorem~2, p.\ 42]{kuratowski2}.

For $\Cc(\clOm)$ separability and second countability are equivalent 
(as it is a metric space), but
the same is not true for $\clOm$ itself, 
since it need not be metrizable.
In fact, if $\Om$ is assumed 
to be second countable,
then $\Om$ is necessarily separable. 
Hence also $\clOm$ is separable, as $\Om$ is dense in $\clOm$.
Since not all compactifications are metrizable it follows from 
Theorem~\ref{thm-metrizQcom} that $\clOm$ is not always second countable.

\begin{proof}
\ref{it-C-second-countable} \eqv \ref{it-C-sep}
As $\Cc(\clOm)$ is a metric spaces this follows 
from the remarks above.

\ref{it-metrizable} \imp \ref{it-second-countable}
Since $\clOm$ is homeomorphic to a compact metric space,
it is second countable by the remarks above.

\ref{it-second-countable} \imp \ref{it-C-sep}
If $\clOm$ is second countable, then we choose a countable 
base $\{G_n\}_{n=1}^{\infty}$ for the topology on $\clOm$.
For each pair $(n,m)$, such that 
$\itoverline{G}_n \cap \itoverline{G}_m = \emptyset$, 
choose a function $f_{nm} \in \Cc(\clOm)$ which is 
$1$ on $G_n$ and $0$ on $G_m$ 
(this is possible by Tietze's extension theorem).
Together with the constant function $1$, these functions form
a countable set 
$A \subset \Cc(\clOm)$ 
separating the points of $\clOm$.
Let $Q$ denote the algebra over $\Q$ generated by $A$.
Then $Q$ is countable, and $\itoverline{Q}$ is a closed  algebra 
(over $\R$)
of continuous functions which contains the constant functions 
and separates the points of 
$\clOm$. 
Hence $\itoverline{Q} = \Cc(\clOm)$ by the 
Stone--Weierstrass theorem (see e.g.\ Stone~\cite[Corollary~1, p.\ 179]{Stone}). 
Thus  $\Cc(\clOm)$ is separable.

\ref{it-C-sep} \imp \ref{it-Qcount}
By assumption there is a countable dense subset $Q' \subset \Cc(\clOm)$. 
If we let $Q =\{f |_{\Om} : f \in Q'\}$, then $Q$ is dense in 
$\{f |_{\Om} : f \in \Cc(\clOm)\}$, and hence 
$\clOm$ is a $Q$-compactification,
by Lemma~\ref{lem-CC-clOm}.
The conclusion
now follows from Proposition~\ref{prop-ImProd}.

\ref{it-Qcount} \imp \ref{it-metrizable}
Let 
$Q'$ be a countable dense subset of $\Cc_c(\Om)$,
which exists as $\Om$ is second countable.
Then $Q_1=Q \cup Q'$, extended as
continuous functions on the whole space $\clOm^Q$, is a countable set which separates the points of $\clOm^Q$. 
It thus follows from Lemma~\ref{lem-ImProd} that 
$\clOm \simeq \clOm^Q \simeq \clOm^{Q_1}$ is homeomorphic to a subset of the product space 
$\prod_{f \in Q_1} I_f$.
As $Q_1$ is countable, this product space is metrizable 
(to see this, let 
$d((x_1,x_2,\ldots),(y_1,y_2,\ldots))=\sum_{j=1}^\infty 2^{-j} 
|{\arctan x_j- \arctan y_j}|$),
and hence also
$\clOm$ is metrizable.
\end{proof}

In particular we note that if both 
$\Cc(\clOmone)$ and $\Cc(\clOmtwo)$ 
are second countable, then the same is true of both their union and intersection (seen as restrictions to $\Omega$). 
So both the least upper bound and the greatest lower bound of 
$\partial^1 \Omega$ and $\partial^2 \Omega$
are metrizable if  $\partial^1 \Omega$ and $\partial^2 \Omega$ are metrizable. 

\begin{prop} \label{prop-metrizable}
For any set $Q \subset \Cc(\Om)$ we have that $\clOm^Q$ is metrizable 
if and only if $Q$ contains a countable dense subset.
\end{prop}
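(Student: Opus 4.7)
The proof is essentially a direct assembly of Lemma~\ref{lem-Q-dense} and Theorem~\ref{thm-metrizQcom}, with one technical wrinkle for the harder direction.

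For the ``if'' direction, suppose $Q_0\subset Q$ is a countable dense subset. Then Lemma~\ref{lem-Q-dense} gives $\clOm^{Q_0}\simeq\clOm^Q$, and the implication \ref{it-Qcount}$\Rightarrow$\ref{it-metrizable} of Theorem~\ref{thm-metrizQcom}, applied to the countable set $Q_0$, yields that $\clOm^Q$ is metrizable. This direction I would write in a single sentence.

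For the ``only if'' direction, assume $\clOmQ$ is metrizable. By \ref{it-metrizable}$\Rightarrow$\ref{it-C-sep} of Theorem~\ref{thm-metrizQcom}, the sup-normed Banach space $\Cc(\clOmQ)$ is separable. Each $f\in Q$ corresponds to its extension $\fh\in\Cc(\clOmQ,\eR)$, so the natural plan is to identify $Q$ with a subset of $\Cc(\clOmQ,\eR)$ and transfer separability from the ambient space.

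The one genuine obstacle is that $\fh$ may take the values $\pm\infty$ on $\bdyQ\Om$, so $\{\fh:f\in Q\}$ is not literally a subset of the sup-normed space $\Cc(\clOmQ)$. I would handle this by fixing a homeomorphism $\phi:\eR\to[-1,1]$ (e.g.\ $\phi=(2/\pi)\arctan$) and replacing $Q$ with $Q_b:=\{\phi\circ f:f\in Q\}$. Since $\phi$ is an injective homeomorphism, the extensions $\phi\circ\fh\in\Cc(\clOmQ,[-1,1])$ separate the points of $\bdyQ\Om$ exactly when the $\fh$ do, so $\clOmQ$ is also a $Q_b$-compactification, and Proposition~\ref{prop-ImProd} gives $\clOm^{Q_b}\simeq\clOmQ$. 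Now $Q_b$ consists of bounded continuous functions and the isometric embedding $g\mapsto\gh$ places $Q_b$ inside the separable metric space $\Cc(\clOmQ)$; hence $Q_b$ admits a countable sup-dense subset $\{\phi\circ f_n\}$. Pulling back, $\{f_n\}\subset Q$ is countable and dense in $Q$ in precisely the topology under which Lemma~\ref{lem-Q-dense} was stated (uniform convergence of the extensions on $\clOmQ$ after composing with $\phi$, which reduces to ordinary sup-norm density whenever the functions involved are bounded).

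I expect the main subtlety to be this interpretation of ``dense subset'' in the presence of possibly unbounded functions; once that is pinned down via the $\phi$-trick, the rest is formal. Alternatively, if one reads the proposition as tacitly restricting to $Q\subset\Cbdd(\Om)$, the $\phi$-reduction can be skipped and $Q$ simply embeds directly into the separable $\Cc(\clOmQ)$.
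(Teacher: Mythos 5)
Your approach is the same as the paper's: Lemma~\ref{lem-Q-dense} together with \ref{it-Qcount}$\Rightarrow$\ref{it-metrizable} of Theorem~\ref{thm-metrizQcom} for ``if'', and transferring separability of $\Cc(\clOmQ)$ to $Q$ for ``only if''. The ``if'' direction matches verbatim.

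For ``only if'', the paper simply regards $Q$ as a subset of $\Cc(\clOmQ)$; you correctly note that this presupposes each extension $\fh$ is real-valued on $\clOmQ$, i.e.\ that each $f\in Q$ is bounded, and you patch this with the $\arctan$ conjugation. That is more careful than the paper. But, as you half-acknowledge, the $\phi$-trick yields a countable subset that is dense in the metric $\sup_\Om|\phi\circ f-\phi\circ g|$, not in the sup metric of $\Cc(\Om)$. Since $\phi^{-1}$ is not uniformly continuous at $\pm1$, these two notions of density genuinely differ once unbounded functions enter: for $\Om=(0,1)$ and $Q=\{\lambda/x:\lambda\in(0,1)\}$, one checks that $\clOmQ\simeq[0,1]$ is metrizable, yet $Q$ is uncountable and discrete in the sup metric (all pairwise distances are infinite), so it has no countable sup-dense subset. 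Thus for unbounded $Q$ the statement as literally phrased fails, and this gap is present both in your write-up and in the paper's own proof (which tacitly treats the $\fh$ as real-valued). Reading the proposition with $Q\subset\Cbdd(\Om)$, as your final sentence suggests, makes both proofs correct and essentially identical.
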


\begin{proof}
If 
$Q_1 \subset Q$ is countable and dense, 
then $\clOm^Q \simeq \clOm^{Q_1}$, by 
Lemma~\ref{lem-Q-dense},  and  
 Theorem~\ref{thm-metrizQcom} implies that $\clOm^{Q_1}$ is metrizable.

Conversely, if
$\clOm^Q$ is metrizable then $Q$ (seen as 
functions extended to elements in $\Cc(\clOm^Q)$) is,
according to Theorem~\ref{thm-metrizQcom}, a subset of 
a second countable metric space (with the induced topology). 
Thus it 
is itself a second countable metric space, and hence 
separable.
\end{proof}

Above we characterized the metrizable compactifications. 
Assume that the space $\Om$ has a topology given by a metric $d$. 
A natural class of continuous functions to 
compactify $\Om$ with in this situation is given by 
\[
  Q =\{d(x,\cdot\,): x \in \Om\}.
\]
If $\Om$ is separable, then it follows from Proposition~\ref{prop-metrizable}
that this leads to a 
metrizable compactification $\clOmQ$, 
for some metric $d_Q$ which is locally equivalent to the metric $d$ inside 
$\Om$ (i.e.\ it gives the same topology). 
However it is not always possible to choose $d_Q = d$ on $\Om \times \Om$. 
In particular this is  
never possible if $\Om$ is unbounded.
We do however have the following result.

\begin{prop}  \label{prop-Q=d(x,.)}
Assume that $(K,d)$ is a compact metric space, 
and that $\Om$ is an open dense subset of $K$.
Then $K \simeq \clOmQ$, where $Q=\{d(x,\cdot\,): x \in \Om\}.$
\end{prop}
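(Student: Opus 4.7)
The plan is to verify directly that the given space $K$ (with its original metric topology) is itself a $Q$-compactification of $\Om$ in the sense of Definition~\ref{def-Q-comp}, and then to invoke the uniqueness part of Proposition~\ref{prop-ImProd} (applied with $Q_1=Q_2=Q$) to conclude $K \simeq \clOmQ$. Throughout I tacitly assume $\Om$ is a proper subset of $K$, since otherwise $\Om=K$ is compact and there is nothing to prove.

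First I would check that $K$ is a compactification of $\Om$ at all. The space $K$ is compact Hausdorff by hypothesis, and $\Om$ is dense in $K$. Since $\Om$ is an open subset of the compact Hausdorff space $K$, it is automatically locally compact Hausdorff, and (being a proper open dense subset) it is noncompact. The subspace topology that $K$ induces on $\Om$ is, of course, the topology coming from the restriction of $d$, i.e.\ the original topology on $\Om$. Thus the three axioms of a compactification are satisfied.

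Next I would verify the two conditions in Definition~\ref{def-Q-comp}. For each $f=d(x,\cdot\,)\in Q$ with $x\in\Om\subset K$, the function $d(x,\cdot\,)$ is $1$-Lipschitz on $K$, so it serves as its own continuous extension $\fh\in\Cc(K)$; this gives \ref{it-def-comp-f-hat}. For \ref{it-def-comp-separate}, suppose $y,z\in K\setm\Om$ are distinct. I want $x\in\Om$ with $d(x,y)\ne d(x,z)$. If no such $x$ existed, then the continuous function $g:=d(\cdot,y)-d(\cdot,z)$ would vanish on the dense subset $\Om$ of $K$, hence on all of $K$. Evaluating at $x=y$ would give $-d(y,z)=0$, contradicting $y\ne z$. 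Therefore the family $\{\fh:f\in Q\}$ separates the points of $K\setm\Om$.

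With both conditions verified, $K$ is a $Q$-compactification of $\Om$. By Proposition~\ref{prop-ImProd} (uniqueness of $Q$-compactifications up to homeomorphism fixing $\Om$), $K\simeq\clOmQ$. The argument is short; the only place where any thought is needed is the separation step, which is exactly where the density of $\Om$ in $K$ enters in an essential way.
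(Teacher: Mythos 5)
Your proof is correct and follows exactly the same route as the paper's: show that $K$ itself is a $Q$-compactification of $\Om$ and then cite Proposition~\ref{prop-ImProd} for uniqueness. The paper's proof is a one-liner (``by definition and denseness, $K$ is a $Q$-compactification of $\Om$''), and your write-up simply fills in what that phrase means, in particular spelling out the separation argument where density of $\Om$ is used.
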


By Lemma~\ref{lem-Q-dense}, it is enough to take the distance functions
only with respect to a countable dense subset of $\Om$.

\begin{proof}
Let $\partial \Om = K \setminus \Om$ be the boundary induced by the metric $d$. 
Then, by definition and denseness, $K$ 
is a $Q$-compactification of $\Om$.
Thus, Proposition~\ref{prop-ImProd} shows that 
$K\simeq \clOmQ$. 
\end{proof}

\begin{prop} 
Assume that $(K,d)$ is a compact metric space, and 
that $\Om$ is an open dense subset of $K$.
If $\Qh \subset \Cc(K)$ 
and we let 
$Q=\{f|_{\Omega}: f \in \Qh\}$, then $\partial^Q \Om \prec \partial \Om$.
\end{prop}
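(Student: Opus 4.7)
The plan is to exhibit a continuous map $\Phi : K \to \clOmQ$ satisfying $\Phi|_{\Om} = \id$, and for this I will use the explicit construction of $\clOmQ$ from the proof of Theorem~\ref{thm-exQcom2}.

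First, by the previous proposition (with $Q$ replaced by the full family of distance functions), $K$ is a compactification of $\Om$ in the sense of our definition, with $\bdy\Om = K \setminus \Om$. Next, recall from the proof of Theorem~\ref{thm-exQcom2} that $\clOmQ$ is constructed by setting $Q' = Q \cup \Cc_c(\Om)$, embedding $\Om$ via
\[
\psi : \Om \longrightarrow \prod_{f \in Q'} I_f, \qquad \psi(x) = \{f(x)\}_{f \in Q'},
\]
and identifying $\clOmQ$ with $\itoverline{R} \subset \prod_{f\in Q'} I_f$, where $R = \psi(\Om)$.

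The key step is to extend $\psi$ continuously from $\Om$ to $K$. Every $f \in Q$ by hypothesis equals the restriction to $\Om$ of some $\fh \in \Qh \subset \Cc(K)$, and every $g \in \Cc_c(\Om)$ extends continuously to $K$ by setting $g \equiv 0$ on $\bdy \Om$ (the support of $g$ is a compact subset of $\Om$, hence of $K$, so zero extension is continuous at every boundary point). Denote these extensions by $\tilde f$, and define
\[
\Psi : K \longrightarrow \prod_{f \in Q'} I_f, \qquad \Psi(x) = \{\tilde f(x)\}_{f \in Q'}.
\]
Then $\Psi$ is continuous because each projection $\pi_f \circ \Psi = \tilde f$ is continuous on $K$, and by construction $\Psi|_{\Om} = \psi$.

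Finally, $\Om$ is dense in $K$, so by continuity $\Psi(K) \subset \itoverline{\Psi(\Om)} = \itoverline{R} = \clOmQ$. Thus $\Psi : K \to \clOmQ$ is continuous, and under the identification of $\Om$ with $R$ used in the construction of $\clOmQ$, its restriction to $\Om$ is the identity. This is exactly the witness required by the definition of $\prec$, so $\bdyQ \Om \prec \bdy\Om$. No single step is a serious obstacle; the only point that needs care is checking that compactly supported functions on $\Om$ extend continuously by zero to $K$, which is immediate because their supports are compact inside the open set $\Om \subset K$.
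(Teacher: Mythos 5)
Your proof is correct, and it takes a genuinely different (more hands-on) route than the paper. The paper's own proof is a one-liner: it appeals to Lemma~\ref{lem-CC-clOm} to see that $K$ is a $Q_2$-compactification of $\Om$ with $Q_2=\{f|_\Om : f\in\Cc(K)\}\supset Q$, and then applies the monotonicity statement Proposition~\ref{prop-ImProd} to conclude $\partial^Q\Om\prec\partial\Om$. You instead unwind the product-space construction of $\clOmQ$ from Theorem~\ref{thm-exQcom2} and directly build the witnessing map $\Psi:K\to\clOmQ$ by extending each coordinate of $\psi$ continuously to $K$: elements of $Q$ extend because $\Qh\subset\Cc(K)$, and elements of $\Cc_c(\Om)$ extend by zero (your justification for this is correct --- compact support inside the open set $\Om$ gives a neighbourhood of each boundary point on which the extension vanishes). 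The containment $\Psi(K)\subset\itoverline{\Psi(\Om)}=\itoverline{R}$ via density and continuity closes the argument. What your version buys is self-containedness and transparency about \emph{which} map realizes $\prec$; what it loses is brevity, since you are essentially re-deriving the special case of Proposition~\ref{prop-ImProd} that is needed. One small remark: you do not actually need Proposition~\ref{prop-Q=d(x,.)} to see that $K$ is a compactification of $\Om$ --- that is immediate from $K$ compact Hausdorff, $\Om$ open and dense in $K$, and the subspace topology agreeing with the metric topology. Invoking it is harmless but heavier machinery than required.
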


\begin{proof}
This is an immediate consequence of Lemma~\ref{lem-CC-clOm} 
 and Proposition~\ref{prop-ImProd}.
\end{proof}

Another way to introduce a boundary on $\Om$ is to complete it (with respect
to a given metric). 
If $\Om$ is unbounded then this will of course 
never lead to a compact space.
On the other hand, 
if 
the completion is compact, then it must be homeomorphic to the $Q$-compactification with $Q=\{d(x,\cdot\,): x \in \Om\}$, 
by Proposition~\ref{prop-Q=d(x,.)}.

When proving Theorem~\ref{thm-res-newt-res} we will need the following
result.

\begin{thm} \label{thm-dense}
Let $Q$ be a sublattice of $\Cbdd(\Omega)$ which contains the constant functions.
Then 
\[
  \Qh := \{f|_{\partial^Q \Omega}: f \in \Cc(\clOmQ) 
  \textrm{ and } f |_{\Omega} \in Q\}
\]
 is dense in $\Cc(\partial^Q \Omega)$.
\end{thm}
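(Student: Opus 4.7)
The plan is to apply the lattice form of the Stone--Weierstrass theorem (the Kakutani--Stone theorem) to the compact Hausdorff space $\bdyQ\Om$: any sublattice of $\Cc(K)$ on a compact Hausdorff space $K$ that contains the constants and separates the points is uniformly dense. I would verify these three properties for $\Qh\subset\Cc(\bdyQ\Om)$.

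First I would ensure that for each $f\in Q$ the continuous extension $\fh\in\Cc(\clOmQ,\eR)$ given by Definition~\ref{def-Q-comp}\ref{it-def-comp-f-hat} is actually finite-valued, so that $\fh\in\Cc(\clOmQ)$. Since $f\in\Cbdd(\Om)$, there is some $M$ with $|f|\le M$ on $\Om$; by continuity, the set $\fh^{-1}([-M,M])$ is closed in $\clOmQ$ and contains the dense subset $\Om$, hence equals $\clOmQ$. Therefore $\fh\in\Cc(\clOmQ)$, and $\fh|_{\bdyQ\Om}\in\Qh$. In particular, for every constant $c\in\R$ we have $c\in Q$ by assumption, and the constant function $c$ on $\clOmQ$ realises $c|_{\bdyQ\Om}\in\Qh$, so $\Qh$ contains the constants.

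Next I would check the lattice property. If $u_i = \fh_i|_{\bdyQ\Om}\in\Qh$ with $\fh_i\in\Cc(\clOmQ)$ and $\fh_i|_\Om = f_i\in Q$ for $i=1,2$, then $\max(\fh_1,\fh_2)\in\Cc(\clOmQ)$ and its restriction to $\Om$ is $\max(f_1,f_2)$, which belongs to $Q$ because $Q$ is a sublattice; hence $\max(u_1,u_2)\in\Qh$, and similarly for $\min$. For point separation on $\bdyQ\Om$, Definition~\ref{def-Q-comp}\ref{it-def-comp-separate} says precisely that $\{\fh:f\in Q\}$ separates these points; by the first paragraph, each such $\fh$ lies in $\Cc(\clOmQ)$, so its restriction to $\bdyQ\Om$ lies in $\Qh$, and thus $\Qh$ separates points.

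The three hypotheses of Kakutani--Stone are therefore met and the theorem yields the density of $\Qh$ in $\Cc(\bdyQ\Om)$. The only nontrivial point is passing from $\Cc(\clOmQ,\eR)$ (the class in which $Q$-compactifications are defined) to $\Cc(\clOmQ)$, which is precisely where the boundedness assumption $Q\subset\Cbdd(\Om)$ is essential; once that is in place the argument is a direct application of a classical lattice density theorem.
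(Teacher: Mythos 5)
Your proof is correct and follows essentially the same route as the paper's: both invoke Stone's lattice density theorem (the paper's Theorem~\ref{thm-Stone}) on the compact Hausdorff space $\bdyQ\Om$, after observing that $\Qh$ is a sublattice containing the constants and separating points by construction. Your write-up is more detailed — in particular, you explicitly justify that the extensions $\fh$ land in $\Cc(\clOmQ)$ rather than merely $\Cc(\clOmQ,\eR)$, a point the paper leaves implicit — but the underlying argument is the same.
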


The lattice structure is with respect to $\max$ and $\min$.
To prove this we need the following
fundamental theorem due to 
Stone~\cite[Corollary~1, p.\ 170]{Stone}.

\begin{thm}\label{thm-Stone}
If $T$ is a compact Hausdorff space and $L$ is a sublattice of $\Cc(T)$ 
which contains the constant functions 
and separates the points of $T$, then $L$ is dense in $\Cc(T)$.
\end{thm}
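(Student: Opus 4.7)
The plan is to apply Theorem~\ref{thm-Stone} (Stone's lattice theorem) directly with $T=\bdyQ\Om$ and $L=\Qh$. Since $\clOmQ$ is compact Hausdorff, the closed subset $\bdyQ\Om$ is also compact Hausdorff, and $\Qh\subset\Cc(\bdyQ\Om)$ by definition. So I only need to verify the three hypotheses of Theorem~\ref{thm-Stone}: that $\Qh$ is a sublattice of $\Cc(\bdyQ\Om)$, that it contains the constants, and that it separates points of $\bdyQ\Om$.

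For the lattice property, take $f,g\in\Cc(\clOmQ)$ with $f|_\Om,g|_\Om\in Q$. Then $\max(f,g)$ and $\min(f,g)$ are again in $\Cc(\clOmQ)$, and their restrictions to $\Om$ equal $\max(f|_\Om,g|_\Om)$ and $\min(f|_\Om,g|_\Om)$, which belong to $Q$ since $Q$ is a sublattice of $\Cbdd(\Om)$. Restricting to $\bdyQ\Om$ shows that $\max$ and $\min$ of two elements of $\Qh$ again lie in $\Qh$. The constant functions on $\bdyQ\Om$ arise by restriction of the corresponding constants on $\clOmQ$, whose restrictions to $\Om$ lie in $Q$ by assumption; so $\Qh$ contains the constants.

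The only step that requires a small argument is that $\Qh$ separates the points of $\bdyQ\Om$. By Definition~\ref{def-Q-comp}\ref{it-def-comp-separate}, the extensions $\{\fh:f\in Q\}\subset\Cc(\clOmQ,\eR)$ do separate the points of $\bdyQ\Om$. The subtlety is that a priori $\fh$ is only \emph{extended}-real valued, so it is not obvious that $\fh|_{\bdyQ\Om}\in\Qh$. However, since $Q\subset\Cbdd(\Om)$, each $f\in Q$ satisfies $|f|\le M_f$ on $\Om$ for some constant $M_f$. As $\Om$ is dense in $\clOmQ$ and $\fh$ is continuous, we have
\[
\fh(\clOmQ)\subset\itoverline{\fh(\Om)}=\itoverline{f(\Om)}\subset[-M_f,M_f],
\]
where the closure is taken in $\eR$. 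Hence $\fh\in\Cc(\clOmQ)$, and therefore $\fh|_{\bdyQ\Om}\in\Qh$ for every $f\in Q$. This family inherits the separation of points from Definition~\ref{def-Q-comp}\ref{it-def-comp-separate}.

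With all three hypotheses verified, Theorem~\ref{thm-Stone} gives that $\Qh$ is dense in $\Cc(\bdyQ\Om)$, as required. The main (and essentially the only) obstacle is the extended-real-valued issue just described; everything else is direct bookkeeping transported from $\Om$ to $\bdyQ\Om$ via the continuous extensions.
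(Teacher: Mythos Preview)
Your proposal does not address the stated theorem. Theorem~\ref{thm-Stone} is Stone's lattice version of the Weierstrass approximation theorem: it asserts that any sublattice $L\subset\Cc(T)$ containing the constants and separating points of a compact Hausdorff space $T$ is dense in $\Cc(T)$. Your proposal, however, takes Theorem~\ref{thm-Stone} as a black box and uses it to prove Theorem~\ref{thm-dense} instead --- that $\Qh$ is dense in $\Cc(\bdyQ\Om)$. You have proved the wrong statement; invoking the very theorem you are supposed to establish is circular.

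Note that the paper itself does not prove Theorem~\ref{thm-Stone} either: it is quoted as a classical result of Stone and then used to derive Theorem~\ref{thm-dense}. What you have written is, in fact, essentially the paper's proof of Theorem~\ref{thm-dense}, only with the verification of the hypotheses (sublattice, constants, point-separation, and the observation that boundedness of $f\in Q$ forces $\fh$ to be real-valued) spelled out more carefully than the paper's one-line ``It is easy to see that $\Qh$ is a sublattice of $\Cc(\partial^Q \Omega)$ which separates the points \ldots''. As a proof of Theorem~\ref{thm-dense}, your argument is correct and even a little more detailed than the paper's. As a proof of Theorem~\ref{thm-Stone}, it is not a proof at all.
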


\begin{proof}[Proof of Theorem~\ref{thm-dense}]
It is easy to see that $\Qh$ is a sublattice of $\Cc(\partial^Q \Omega)$ 
which separates the points of the compact space $\partial^Q \Omega$ by construction. 
Hence the statement follows from Theorem~\ref{thm-Stone}.
\end{proof}

\section{Metric measure spaces}
\label{sect-prelim}

We assume throughout the rest of the paper that $1 < p<\infty$ 
and that $X=(X,d,\mu)$ is a metric space equipped
with a metric $d$ and a positive complete  Borel  measure $\mu$ 
such that $0<\mu(B)<\infty$ for all balls $B \subset X$
(we adopt the convention that balls are nonempty and open). 
It follows that $X$ is separable.
We also assume that $\Om \subset X$ is a nonempty open 
set. Further standing assumptions will be given at the
beginning of Section~\ref{sect-superharm}.
Proofs of the results in 
this section can be found in the monographs
Bj\"orn--Bj\"orn~\cite{BBbook} and
Heinonen--Koskela--Shanmugalingam--Tyson~\cite{HKSTbook}.

A \emph{curve} is a continuous mapping from an interval,
and a \emph{rectifiable} curve is a curve with finite length.
We will only consider curves which are nonconstant, compact
and 
rectifiable, and thus each curve can 
be parameterized by its arc length $ds$. 
A property is said to hold for \emph{\p-almost every curve}
if it fails only for a curve family $\Ga$ with zero \p-modulus, 
i.e.\ there exists $0\le\rho\in L^p(X)$ such that 
$\int_\ga \rho\,ds=\infty$ for every curve $\ga\in\Ga$.

Following Heinonen--Kos\-ke\-la~\cite{HeKo98},
we introduce upper gradients 
as follows 
(they called them very weak gradients).

\begin{deff} \label{deff-ug}
A Borel function $g: X \to [0,\infty]$  is an \emph{upper gradient} 
of a function $f:X \to \eR$
if for all  curves  
$\gamma \colon [0,l_{\gamma}] \to X$,
\begin{equation} \label{ug-cond}
        |f(\gamma(0)) - f(\gamma(l_{\gamma}))| \le \int_{\gamma} g\,ds,
\end{equation}
where the left-hand side is considered to be $\infty$ 
whenever at least one of the 
terms therein is infinite.
If $g:X \to [0,\infty]$ is measurable 
and \eqref{ug-cond} holds for \p-almost every curve,
then $g$ is a \emph{\p-weak upper gradient} of~$f$. 
\end{deff}

The \p-weak upper gradients were introduced in
Koskela--MacManus~\cite{KoMc}. 
It was also shown therein
that if $g \in \Lploc(X)$ is a \p-weak upper gradient of $f$,
then one can find a sequence $\{g_j\}_{j=1}^\infty$
of upper gradients of $f$ such that $g_j-g \to 0$ in $L^p(X)$.
If $f$ has an upper gradient in $\Lploc(X)$, then
it has an a.e.\ unique \emph{minimal \p-weak upper gradient} $g_f \in \Lploc(X)$
in the sense that for every \p-weak upper gradient $g \in \Lploc(X)$ of $f$ we have
$g_f \le g$ a.e., see Shan\-mu\-ga\-lin\-gam~\cite{Sh-harm}.
Following Shanmugalingam~\cite{Sh-rev}, 
we define a version of Sobolev spaces on the metric space $X$.

\begin{deff} \label{deff-Np}
For a measurable function $f:X\to \eR$, let 
\[
        \|f\|_{\Np(X)} = \biggl( \int_X |f|^p \, d\mu 
                + \inf_g  \int_X g^p \, d\mu \biggr)^{1/p},
\]
where the infimum is taken over all upper gradients $g$ of $f$.
The \emph{Newtonian space} on $X$ is 
\[
        \Np (X) = \{f: \|f\|_{\Np(X)} <\infty \}.
\]
\end{deff}

The quotient
space $\Np(X)/{\sim}$, where  $f \sim h$ if and only if $\|f-h\|_{\Np(X)}=0$,
is a Banach space and a lattice, see Shan\-mu\-ga\-lin\-gam~\cite{Sh-rev}.
We also define
\[
   \Dp(X)=\{f : f \text{ is measurable and  has an upper gradient
     in }   L^p(X)\}.
\]
In this paper we assume that functions in $\Np(X)$
and $\Dp(X)$
 are defined everywhere (with values in $\eR$),
not just up to an equivalence class in the corresponding function space.

For a measurable set $E\subset X$, the Newtonian space $\Np(E)$ is defined by
considering $(E,d|_E,\mu|_E)$ as a metric space in its own right.
We say  that $f \in \Nploc(E)$ if
for every $x \in E$ there exists a ball $B_x\ni x$ such that
$f \in \Np(B_x \cap E)$.
The spaces $\Dp(E)$ and $\Dploc(E)$ are defined similarly.
If $f,h \in \Dploc(X)$, then $g_f=g_h$ a.e.\ in $\{x \in X : f(x)=h(x)\}$,
in particular $g_{\min\{f,c\}}=g_f \chi_{\{f < c\}}$ for $c \in \R$.

\begin{deff}
Let $\Om\subset X$ be an open set.
The (Sobolev) \emph{capacity} (with respect to $\Om$)
of a set $E \subset \Om$  is the number 
\begin{equation*} 
   \Cp(E;\Om) =\inf_u    \|u\|_{\Np(\Om)}^p,
\end{equation*}
where the infimum is taken over all $u\in \Np (\Om) $ such that $u=1$ on $E$.
\end{deff}

Observe that the above capacity is \emph{not} the so-called variational
capacity. 
For a given set $E$ we will consider the capacity taken with respect
to different sets $\Om$. When the capacity is taken with respect to the underlying
metric space  $X$, we usually drop $X$ from the notation and merely write $\Cp(E)$.
The capacity is countably subadditive. 

We say that a property holds \emph{quasieverywhere} (q.e.)\ 
if the set of points  for which the property does not hold 
has capacity zero. 
The capacity is the correct gauge 
for distinguishing between two Newtonian functions. 
If $u \in \Np(X)$, then $u \sim v$ if and only if $u=v$ q.e.
Moreover, 
if $u,v \in \Dploc(X)$ and $u= v$ a.e., then $u=v$ q.e.

The space of Newtonian functions with 
zero boundary values  is defined by
\[
\Np_0(\Om)=\{f|_{\Om} : f \in \Np(X) \text{ and }
        f=0 \text{ in } X \setm \Om\}.
\]
The space $\Dp_0(\Om)$ is defined analogously.

We say that $\mu$  is \emph{doubling} if 
there exists a \emph{doubling constant} $C>0$ such that for all balls 
$B=B(x_0,r):=\{x\in X: d(x,x_0)<r\}$ in~$X$,
\begin{equation*}
        0 < \mu(2B) \le C \mu(B) < \infty,
\end{equation*}
where $\lambda B=B(x_0,\lambda r)$.  
Recall that $X$ is \emph{proper} if all closed bounded subsets of 
$X$ are compact. 
If $\mu$ is doubling then $X$ is 
proper  if and only if it is complete.

\begin{deff} \label{def.PI.}
We say that $X$ supports a \emph{$(q,p)$-Poincar\'e inequality} if
there exist constants $C>0$ and $\lambda \ge 1$
such that for all balls $B \subset X$, 
all integrable functions $f$ on $X$ and all 
(\p-weak) upper gradients $g$ of $f$, 
\[ 
        \biggl(\vint_{B} |f-f_B|^q \,d\mu \biggr)^{1/q}
        \le C \diam(B) \biggl( \vint_{\lambda B} g^{p} \,d\mu \biggr)^{1/p},
\] 
where $ f_B 
 :=\vint_B f \,d\mu 
:= \int_B f\, d\mu/\mu(B)$.
\end{deff}

If $X$ is complete and  supports a $(1,p)$-Poincar\'e inequality
and $\mu$ is doubling, then Lipschitz functions
are dense in $\Np(X)$, see Shan\-mu\-ga\-lin\-gam~\cite{Sh-rev}, and the functions
in $\Np(X)$
and those in $\Np(\Om)$ are \emph{quasicontinuous},
i.e.\ 
for every $\eps>0$ there is an open set $U$ such that
$\Cp(U)<\eps$ and $f|_{X \setm U}$ is real-valued continuous,
see Bj\"orn--Bj\"orn--Shan\-mu\-ga\-lin\-gam~\cite{BBS5}.
This means that in the Euclidean setting, $\Np(\R^n)$ is the 
refined Sobolev space as defined in
Heinonen--Kilpel\"ainen--Martio~\cite[p.~96]{HeKiMa}, 
see \cite[Appendix~A.2]{BBbook} 
for a proof of this fact valid in weighted $\R^n$.

\section{The capacity \texorpdfstring{$\bCp(\,\cdot\,;\Omone)$}{}}
\label{sect-newcap}

In Bj\"orn--Bj\"orn--Shanmugalingam~\cite{BBSdir} the capacity
$\bCp(\,\cdot\,;\Om)$ was introduced. 
A similar capacity was considered in 
Kilpel\"ainen--Mal\'y~\cite{KilMaGenDir}.
It can also be compared to 
the reduction up to the boundary of superharmonic functions as defined in 
Doob~\cite[Section~1.III.4]{Doobbook}.
In~\cite{BBSdir}, 
such a capacity was also defined
with respect to the Mazurkiewicz distance 
\begin{equation} \label{eq-Mazurkiewicz-distance}
d_M(x,y):=\inf_E \diam E,
\end{equation} 
where the infimum is over all connected sets $E \subset \Om$
containing $x,y \in \Om$.
The completion of $\Om$ with respect to $d_M$
is compact if and only if
$\Om$ is finitely connected at the boundary,
by  Bj\"orn--Bj\"orn--Shanmugalingam~\cite[Theorem~1.1]{BBSmbdy}.
(See \cite{BBSdir} or \cite{BBSmbdy} 
for the definition of finite connectedness at the boundary.)
In this paper we consider the same capacity for arbitrary compactifications of $\Om$, 
which will turn out useful in the study of Perron solutions later in the paper. 
As in Section~\ref{sect-compactifications},
we let $\clOmone=\Om \cup \bdyone \Om$ be an arbitrary compactification of $\Om$
with topology $\tauone$.
Sometimes we will also use the somewhat abusive but convenient notation $\Om^1$ to denote the open set $\Om$ with indication of the particular compactification $\clOmone$.

\medskip

\emph{Throughout the paper, the Newtonian space $\Np(\Om)$ is always taken
with respect to the underlying metric $d$.}

\medskip

\begin{deff} \label{deff-bCp}
For $E \subset \clOmone$  let
\[
     \bCp(E;\Omone)= \inf_{u \in \A_E(\Omone)} \|u\|_{\Np(\Om)}^p,
\]
where $u \in \A_E(\Omone)$ if $u \in \Np(\Om)$ satisfies
both $u \ge 1$ on $E \cap \Om$ and 
\[
      \liminf_{\Om \ni y \toone x} u(y) \ge 1
	\quad \text{for all } x \in E \cap \bdyone \Om.
\]
\end{deff}

By truncation it is easy to see that one may as well
take the infimum over all 
$u \in \At_E(\Omone):=\{u \in \A_E(\Omone) : 0 \le u \le 1 \}$. 
For $E \subset \Om$ we have $\bCp(E;\Omone)= \Cp(E;\Om)$.

The capacity $\bCp(\,\cdot\,;\Omone)$ is easily shown to be monotone and countably
subadditive, cf.\ Proposition~3.2 in~\cite{BBSdir}.
It is also an outer capacity, which will be important for us.

\begin{prop}\label{prop-outercap}
Assume that all functions in $\Np(\Om)$ are quasicontinuous.
Then $\bCp(\,\cdot\,;\Omone)$ is an outer capacity,
i.e.\  for all $E\subset\clOmone$,
\[
	\bCp(E;\Omone)
	=\inf_{\substack{ G \supset E \\  G 
		\text{ is $\tauone$-open in\/ }\clOmone}} 
	\bCp(G;\Omone).
\]
\end{prop}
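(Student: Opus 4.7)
The inequality $\bCp(E;\Omone)\le\inf_G\bCp(G;\Omone)$ is immediate from monotonicity, so the real work is the reverse direction. My plan is to fix $\eps>0$ and pick a quasi-admissible $u\in\At_E(\Omone)$ with $0\le u\le 1$ and $\|u\|_{\Np(\Om)}^p\le\bCp(E;\Omone)+\eps$, then build, for auxiliary parameters $\delta,\eta\in(0,1)$, both a $\tauone$-open set $G\supset E$ and a function $u'\in\A_G(\Omone)$ whose norm is controlled by $\|u\|_{\Np(\Om)}$ up to small errors depending on $\delta,\eta$. Letting $\delta,\eta,\eps\to0$ then yields $\inf_G \bCp(G;\Omone)\le\bCp(E;\Omone)$.

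To construct $G$, I first use the hypothesis that every element of $\Np(\Om)$ is quasicontinuous: find an open set $U\subset\Om$ with $\Cp(U;\Om)<\eta$ such that $u|_{\Om\setminus U}$ is continuous. Then
\[
V:=\{x\in\Om\setminus U:u(x)>1-\delta\}\cup U
\]
is open in $\Om$ (at any point $x\in V\setminus U$, relative continuity produces a neighbourhood $W'$ in $\Om$ with $u>1-\delta$ on $W'\setminus U$, and $W'\cap U\subset V$), and hence is $\tauone$-open in $\clOmone$. Next, for each boundary point $x\in E\cap\bdyone\Om$, the $\liminf$-condition provides a $\tauone$-open neighbourhood $N_x\ni x$ with $u>1-\delta$ on $N_x\cap\Om$. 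Set
\[
G:=V\cup\bigcup_{x\in E\cap\bdyone\Om}N_x.
\]
This is $\tauone$-open and contains $E$: on $E\cap\Om$ we have $u\ge 1>1-\delta$, so $E\cap\Om\subset V$, and the neighbourhoods $N_x$ absorb the boundary points of $E$.

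For admissibility, pick $\phi\in\Np(\Om)$ with $\phi\ge\chi_U$ and $\|\phi\|_{\Np(\Om)}^p<\eta$, and put $u':=\min\{u/(1-\delta)+\phi,\,1\}$. On $G\cap\Om$, either $u>1-\delta$ (so $u/(1-\delta)>1$) or $x\in U$ (so $\phi\ge 1$); in both cases $u'=1$. For $z\in G\cap\bdyone\Om$ we have $z\in N_x$ for some $x\in E\cap\bdyone\Om$, and since $u'\equiv 1$ on $N_x\cap\Om$, we get $\liminf_{\Om\ni y\toone z}u'(y)\ge 1$. Thus $u'\in\A_G(\Omone)$, and using the triangle inequality together with the fact that truncation does not increase the $\Np$-norm,
\[
\bCp(G;\Omone)^{1/p}\le\|u'\|_{\Np(\Om)}\le\frac{\|u\|_{\Np(\Om)}}{1-\delta}+\eta^{1/p}.
\]
Sending $\eta,\delta\to 0$ gives $\inf_G\bCp(G;\Omone)\le\|u\|_{\Np(\Om)}^p\le\bCp(E;\Omone)+\eps$, and then $\eps\to 0$ closes the proof. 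The main obstacle is that the superlevel set $\{u>1-\delta\}$ need not be open, which is precisely why quasicontinuity is needed to enlarge it by the small-capacity exceptional set $U$; the boundary part is handled separately and cleanly by the neighbourhood formulation of the $\liminf$-condition, which avoids any sequential issues in the not-necessarily-metrizable topology $\tauone$.
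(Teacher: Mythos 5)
Your proof is correct and takes essentially the same route as the paper, which simply notes that the proof of Proposition~3.3 in Bj\"orn--Bj\"orn--Shanmugalingam~\cite{BBSdir} carries over with the metric neighbourhoods $B(x,r_x)\cap\clOm$ at boundary points replaced by $\tauone$-neighbourhoods $N_x$. Your explicit reconstruction (quasicontinuity yielding the small-capacity open $U$ so that the level set of $u$ becomes open, plus the neighbourhood form of the $\liminf$-condition at $\bdyone\Om$, then correcting by an admissible $\phi$ for $U$ and truncating) is precisely that argument spelled out.
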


\begin{proof}
The proof of Proposition~3.3
in Bj\"orn--Bj\"orn--Shanmugalingam~\cite{BBSdir}, i.e.\ of the corresponding
result for the metric boundary, applies almost verbatim.
The only slight difference is that instead of taking 
$r_x$, and thus
implicitly the neighbourhood $B(x,r_x) \cap \overline{\Om}$ of $x$, one
should choose 
a $\tauone$-neighbourhood of $x$.
\end{proof}

For the sake of clarity we make the following explicit definition.

\begin{deff} \label{deff-bCp-qcont}
A function $f \in \clOmone \to \eR$ is 
\emph{$\bCp(\,\cdot\,;\Omone)$-quasi\-con\-tin\-u\-ous}
if for every $\eps>0$ there is a $\tauone$-open set 
$U \subset \clOmone$ such that
$\bCp(U;\Omone)<\eps$ and $f|_{\clOmone \setm U}$ 
is real-valued continuous.
\end{deff}

For the Dirichlet problem in this paper
it is important to know when functions in $\Np_0(\Om)$ are 
quasicontinuous.

\begin{prop} \label{prop-qcont-Omone-Omtwo} 
Let $f: \Om \to \eR$ and let 
\[
    f_j = \begin{cases}
         f, & \text{in } \Om, \\
         0, & \text{on } \bdy^j \Om,
         \end{cases}
         \quad j=1,2.
\]
Then 
$f_1$ is $\bCp(\,\cdot\,;\Omone)$-quasi\-con\-tin\-u\-ous
if and only if 
$f_2$ is $\bCp(\,\cdot\,;\Omtwo)$-quasi\-con\-tin\-u\-ous.

In particular, if $X$ is proper, $\Om$ bounded and
all functions in $\Np(X)$ are $\Cp(\,\cdot\,)$-quasicontinuous, 
then every $f\in\Np_0(\Om)$ is 
$\bCp(\,\cdot\,;\Omone)$-quasi\-con\-tin\-u\-ous for every compactification
$\clOmone$.
\end{prop}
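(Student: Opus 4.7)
The plan is to prove the main equivalence by a symmetric argument exploiting that $\clOmone$ and $\clOmtwo$ share the open dense subspace $\Om$ with a common induced topology $\tilde\tau$, and then to derive the \emph{in particular} statement by applying the equivalence to the metric compactification $\clOm=\overline\Om$, which is compact and hence a compactification of $\Om$ since $X$ is proper and $\Om$ is bounded. Assuming $f_1$ is $\bCp(\,\cdot\,;\Omone)$-quasicontinuous, fix $\eps>0$ and choose a $\tauone$-open $U_1\subset\clOmone$ with $\bCp(U_1;\Omone)<\eps$ and $f_1|_{\clOmone\setm U_1}$ continuous. Set $V:=U_1\cap\Om$; since $\Om$ is open in each compactification with the same induced topology $\tilde\tau$, the set $V$ is $\tautwo$-open in $\clOmtwo$, and monotonicity of $\bCp$ together with the identity $\bCp(E;\Om^j)=\Cp(E;\Om)$ for $E\subset\Om$ (noted after Definition~\ref{deff-bCp}) yields $\bCp(V;\Omtwo)\le\bCp(U_1;\Omone)<\eps$. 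It thus suffices to prove that $f_2|_{\clOmtwo\setm V}$ is continuous.

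At an interior point $x\in\Om\setm V=\Om\setm U_1$, this is routine: localizing to a $\tauone$-open subset of $\Om$ (possible because $\Om$ is $\tauone$-open), the continuity of $f_1|_{\clOmone\setm U_1}$ at $x$ becomes $\tilde\tau$-continuity of $f$ on $\Om\setm U_1$ at $x$, which is equally $\tautwo$-continuity of $f_2$ there. The crux is continuity at a boundary point $x\in\bdytwo\Om$: since $f_2(x)=0$ we must show $f(y)\to 0$ as $y\to x$ in $\tautwo$ with $y\in\Om\setm V$. Supposing this fails, pick $\delta>0$ and a net $(y_\alpha)\subset\Om\setm V$ with $|f(y_\alpha)|\ge\delta$ converging to $x$ in $\tautwo$; by $\tauone$-compactness of $\clOmone$, extract a subnet $y_{\alpha_\beta}\toone z$ for some $z\in\clOmone$. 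The main obstacle is the ensuing trichotomy. If $z\in\Om$, then $y_{\alpha_\beta}\to z$ in $\tilde\tau$, hence in $\tautwo$, and uniqueness of limits in the Hausdorff space $\clOmtwo$ forces $z=x$, contradicting $x\in\bdytwo\Om$. If $z\in U_1\cap\bdyone\Om$, then $\tauone$-openness of $U_1$ puts $y_{\alpha_\beta}$ eventually into $U_1\cap\Om=V$, contradicting $y_\alpha\in\Om\setm V$. Finally, if $z\in\bdyone\Om\setm U_1$, then continuity of $f_1|_{\clOmone\setm U_1}$ at $z$, together with $y_{\alpha_\beta}\in\Om\setm U_1\subset\clOmone\setm U_1$, yields $f(y_{\alpha_\beta})=f_1(y_{\alpha_\beta})\to f_1(z)=0$, again contradicting $|f(y_{\alpha_\beta})|\ge\delta$. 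The reverse implication follows by symmetry.

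For the \emph{in particular} statement, $\clOm$ is compact, hence a compactification of $\Om$ with boundary $\bdy\Om$. Any $f\in\Np_0(\Om)$ extends by zero to $F\in\Np(X)$, which is $\Cp$-quasicontinuous by hypothesis, so for $\eps>0$ there is an open $U\subset X$ with $\Cp(U;X)<\eps$ and $F|_{X\setm U}$ continuous; set $V:=U\cap\Om$, so that $V$ has $\bCp$-capacity at most $\Cp(V;\Om)\le\Cp(U;X)<\eps$ relative to the metric compactification. A direct check shows $F|_{\clOm\setm V}$ is continuous: at points of $(\Om\setm V)\cup(\bdy\Om\setm U)$ this follows from continuity of $F|_{X\setm U}$ and closedness of $X\setm U$, while at any $x\in U\cap\bdy\Om$ the openness of $U$ supplies a metric ball $B\subset U$ with $B\cap(\clOm\setm V)\subset\bdy\Om$, on which $F\equiv 0=F(x)$. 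Hence the zero-extension of $f$ to $\clOm$ is $\bCp$-quasicontinuous with respect to the metric compactification, and the main equivalence transports this to every compactification $\clOmone$.
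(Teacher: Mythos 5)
Your proof is correct, but the first (and main) part takes a genuinely different route from the paper's. Where you argue by contradiction with nets — taking a net $(y_\alpha)\subset\Om\setm V$ that $\tautwo$-converges to a boundary point $x\in\bdytwo\Om$ with $|f|\ge\delta$, extracting a $\tauone$-convergent subnet, and ruling out each of the three possible limit locations — the paper argues directly via a sublevel-set trick: since $f_1|_{\clOmone\setm G}$ is continuous on the compact set $\clOmone\setm G$ and vanishes on $\bdyone\Om$, for each $\alpha>0$ the set $K_\alpha=\{x\in\clOmone\setm G:|f_1(x)|\ge\alpha\}$ is compact and contained in $\Om$, hence $\clOmtwo\setm K_\alpha$ is a $\tautwo$-neighbourhood of every $y\in\bdytwo\Om$ on which $|f_2|<\alpha$. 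The paper's argument is shorter and avoids subnet extraction, but yours is logically sound; your trichotomy ($z\in\Om$, $z\in U_1\cap\bdyone\Om$, $z\in\bdyone\Om\setm U_1$) is exhaustive and each case correctly produces a contradiction. One tiny omission: when verifying continuity of $f_2|_{\clOmtwo\setm V}$ at $x\in\bdytwo\Om$, you only consider approach through $\Om\setm V$; approach through points of $\bdytwo\Om$ is trivial since $f_2\equiv0$ there, but you should say so. For the \emph{in particular} statement you do essentially what the paper does, except that you verify directly that $F|_{\clOm}$ is $\bCp(\,\cdot\,;\Om)$-quasicontinuous (splitting $\clOm\setm V$ into $\clOm\setm U$ and $U\cap\bdy\Om$), whereas the paper outsources this step to Lemma~5.2 of \cite{BBSdir}. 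Both are fine; your direct check is a self-contained alternative.
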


\begin{proof}
Let $\eps >0$ and assume that $f_1$ is $\bCp(\,\cdot\,;\Omone)$-quasi\-con\-tin\-u\-ous.
Then 
there is a $\tauone$-open set $G \subset \clOmone$
such that $\bCp(G,\Omone) < \eps$ and $f_1|_{\clOmone \setm G}$ is 
real-valued continuous.
Let $U=G \cap \Om$.
It is easy to see that $\A_U(\Omone)=\A_U(\Omtwo)$ since $U \subset \Om$,
and hence $\bCp(U,\Omtwo)=\bCp(U,\Omone)\le \bCp(G,\Omone)< \eps$.

It is immediate that
$f_2|_{\clOmtwo \setm U}$ is continuous at all  $x \in \Om \setm U$, so we need to 
prove continuity at points in $\bdytwo\Om$.  
Since $f_1$ is continuous on 
the compact set
$\clOmone\setm U$, 
the set $K=\{x \in \clOmone \setm U : |f_1(x)| \ge \alp\}$
is compact for any $\alp >0$.
As $f_1=0$ on $\bdyone \Om$,  we see that $K \subset \Om$.
It follows that $|f_2|=|f_1|<\alp$ in $(\Om\setm U) \setminus K$
and $f_2=0$ on $\bdytwo\Om$. 
Since $\clOmtwo \setminus K$ is a  
$\tautwo$-neighbourhood 
of every point $y$ in $\bdytwo\Om$ we conclude that $f_2|_{\clOmtwo\setm U}$ 
is continuous at all $y\in\bdytwo\Om$.
The converse direction follows by swapping the roles of $\Omone$ and $\Omtwo$.

For the last part, extend $f$ by zero in $X\setm\Om$. 
Then $f\in\Np(X)$ and is thus $\Cp(\,\cdot\,)$-quasicontinuous, 
by assumption.
By Lemma~5.2 in Bj\"orn--Bj\"orn--Shan\-mu\-ga\-lin\-gam~\cite{BBSdir},
$\bCp(\,\cdot\,,\Om)$ is majorized by $\Cp(\,\cdot\,)$,
and thus the restriction $f|_{\overline{\Om}}$ to the metric closure
$\clOm$ of $\Om$ (induced by $X$) is $\bCp(\,\cdot\,;\Om)$-quasicontinuous.
As $X$ is proper, the set $\overline{\Om}$ is compact. Thus the 
$\bCp(\,\cdot\,;\Omone)$-quasi\-con\-tin\-uity of $f$ follows from the first
part.
\end{proof}

We conclude this section with the following modification of 
Lemma~5.3 in Bj\"orn--Bj\"orn--Shanmugalingam~\cite{BBS2}, 
which will be useful later.

\begin{lem} \label{lem-Newt}
Let\/ $\{U_k\}_{k=1}^\infty$ be a decreasing sequence of 
$\tauone$-open sets in\/ $\clOmone$ such that $\bCp(U_k;\Omone) <2^{-kp}$.
Then there exists a decreasing sequence of nonnegative 
functions\/ $\{\psi_j\}_{j=1}^\infty$ on\/ $\Om$ such that\/ $\| \psi_j \|_{\Np(\Om)} < 2^{-j}$ 
and $\psi_j \ge k-j$ in $U_k\cap\Om$.
\end{lem}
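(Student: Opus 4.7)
The plan is to produce the $\psi_j$ by a standard telescoping of capacitary test functions, exploiting that the sequence $\{U_k\}$ is nested.

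First, for each $k\ge1$, I would use the definition of $\bCp(\,\cdot\,;\Omone)$ to choose a test function $u_k\in\At_{U_k}(\Omone)$ (so $0\le u_k\le 1$ in $\Om$, $u_k\ge 1$ on $U_k\cap\Om$, and $\liminf u_k \ge 1$ at boundary points of $U_k\cap\bdyone\Om$) with $\|u_k\|_{\Np(\Om)}<2^{-k}$. This is possible because $\bCp(U_k;\Omone)<2^{-kp}$ and, by truncation, one may restrict the infimum in Definition~\ref{deff-bCp} to functions with values in $[0,1]$. I would then define, pointwise on $\Om$,
\[
  \psi_j(x):=\sum_{k=j+1}^{\infty} u_k(x), \qquad j=1,2,\ldots,
\]
allowing the value $+\infty$ a priori. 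Nonnegativity of the $u_k$ immediately gives nonnegativity of $\psi_j$ and the monotonicity $\psi_{j+1}\le\psi_j$.

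Next I would verify the norm estimate. Since $\Np(\Om)$ is a Banach space (modulo equivalence), the triangle inequality applied to the partial sums yields
\[
  \|\psi_j\|_{\Np(\Om)}\le \sum_{k=j+1}^\infty \|u_k\|_{\Np(\Om)}<\sum_{k=j+1}^\infty 2^{-k}=2^{-j}.
\]
Concretely, Minkowski's inequality gives $\|\psi_j\|_{\Lp(\Om)}<2^{-j}$, so $\psi_j$ is finite a.e.; choosing for each $k$ an upper gradient $g_k$ of $u_k$ with $\|g_k\|_{\Lp(\Om)}<2^{-k}$ and using monotone convergence along curves shows that $g:=\sum_{k=j+1}^{\infty}g_k$ is an upper gradient of $\psi_j$, with $\|g\|_{\Lp(\Om)}<2^{-j}$. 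Combining gives $\psi_j\in\Np(\Om)$ with the claimed norm bound.

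Finally, I would check the lower bound on $U_k\cap\Om$. If $k\le j$ then $k-j\le 0$, and the inequality holds trivially. If $k>j$, pick $x\in U_k\cap\Om$; since the sequence $\{U_i\}$ is decreasing, $x\in U_i$ for every $i=j+1,\ldots,k$, so $u_i(x)\ge 1$ for each such $i$, and therefore
\[
  \psi_j(x)\ge \sum_{i=j+1}^{k} u_i(x) \ge k-j.
\]
The only point requiring a bit of care is ensuring that the pointwise sum defining $\psi_j$ genuinely represents a Newtonian function; this is handled by the estimates above together with the fact that the partial sums form a Cauchy sequence in $\Np(\Om)$ whose limit agrees with the pointwise monotone limit a.e. (and, using quasicontinuity or a telescoping argument as in \cite{BBS2}, q.e.). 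After this technicality, the three required properties are immediate from the construction.
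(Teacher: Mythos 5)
Your construction—taking $\bCp$-admissible test functions $u_k$ with $\|u_k\|_{\Np(\Om)}<2^{-k}$ and defining $\psi_j=\sum_{k=j+1}^\infty u_k$—is exactly the paper's proof, which states this in a single line. Your verification of the three claimed properties is correct (the nestedness of $\{U_k\}$ giving $u_i\ge 1$ on $U_k\cap\Om$ for $j<i\le k$ is the key point), and the norm bound follows cleanly from the triangle inequality in $\Np(\Om)$; the only cosmetic remark is that your ``concrete'' decomposition into $\Lp$- and upper-gradient pieces alone would only yield $2^{1/p}\cdot 2^{-j}$, so it is the direct triangle inequality that delivers the stated $2^{-j}$.
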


\begin{proof}
Let $\psi_j=\sum_{k=j+1}^\infty f_k$, where $f_k\in\A_{U_k}(\Omone)$ with 
$\|f_k\|_{\Np(\Om)}<2^{-k}$.
\end{proof}

\begin{remark}
The results in this section hold also when $p=1$.
\end{remark}

\section{\texorpdfstring{\p-harmonic}{p-harmonic} and superharmonic functions}
\label{sect-superharm}

\emph{We assume from now on that $X$ is a complete
metric space supporting a $(1,p)$-Poincar\'e inequality,
that $\mu$ is doubling, and that\/ $1<p<\infty$.
We also assume that\/ $\Om$ is a 
bounded domain
such that $\Cp(X \setm \Om)>0$,
and that $\clOmj=\Om^j \cup \bdyj \Om$, $j=1,2$, are compactifications of 
$\Om$, where $\Om^j= \Om$ with the intended boundary $\bdyj \Om$.
Furthermore, we reserve $\partial \Omega$ and $\clOm$ 
for the metric boundary 
and closure induced by $X$ on $\Omega$.
}

\medskip

Recall that a \emph{domain} is a nonempty bounded open set.

Before introducing \p-harmonic and superharmonic functions,
we will draw some conclusions from our standing assumptions above.
First of all, functions
in $\Nploc(\Om)$ are quasicontinuous and the $\Cp$ capacity is an outer
capacity, by Theorem~1.1 and Corollary~1.3
in Bj\"orn--Bj\"orn--Shan\-mu\-ga\-lin\-gam~\cite{BBS5}
(or \cite[Theorems~5.29 and 5.31]{BBbook}).
Next, observe that $X$ supports a $(p,p)$-Poincar\'e inequality,
by Theorem~5.1 in Haj\l asz--Koskela~\cite{HaKo} (or
\cite[Corollary~4.24]{BBbook}).
Thus, by Proposition~4.14 in \cite{BBbook}, $\Dploc(\Om)=\Nploc(\Om)$
and, by Proposition~2.7 in Bj\"orn--Bj\"orn~\cite{BBnonopen},
$\Dp_0(\Om)=\Np_0(\Om)$.

\begin{deff} \label{def-quasimin}
A function $u \in \Nploc(\Om)$ is a
\emph{\textup{(}super\/\textup{)}minimizer} in $\Om$
if 
\[ 
      \int_{\phi \ne 0} g^p_u \, d\mu
           \le \int_{\phi \ne 0} g_{u+\phi}^p \, d\mu
           \quad \text{for all (nonnegative) } \phi \in \Np_0(\Om).
\] 
A \emph{\p-harmonic function} is a continuous minimizer.
\end{deff}

For various characterizations of minimizers and superminimizers  
see A.~Bj\"orn~\cite{ABkellogg}.
It was shown in Kinnunen--Shanmugalingam~\cite{KiSh01} that 
under the above assumptions,
a minimizer can be modified on a set of zero capacity to obtain
a \p-harmonic function. For a superminimizer $u$,  it was shown by
Kinnunen--Martio~\cite{KiMa02} that its \emph{lsc-regularization}
\[ 
 u^*(x):=\essliminf_{y\to x} u(y)= \lim_{r \to 0} \essinf_{B(x,r)} u
\] 
is also a superminimizer  and $u^*= u$ q.e. 

In this paper we will use  the following obstacle problem.

\begin{deff} \label{deff-obst}
For $f \in \Dp(\Om)$ and $\psi : \Om \to \eR$, we set
\[
    \K_{\psi,f}(\Om)=\{v \in \Dp(\Om) : v-f \in \Np_0(\Om)
            \text{ and } v \ge \psi \ \text{q.e.\ in } \Om\}.
\]
A function $u \in \K_{\psi,f}(\Om)$ is a \emph{solution of the $\K_{\psi,f}(\Om)$-obstacle problem}
if
\[
       \int_\Om g^p_u \, d\mu
       \le \int_\Om g^p_v \, d\mu
       \quad \text{for all } v \in \K_{\psi,f}(\Om).
\]
\end{deff}

A solution of the $\K_{\psi,f}(\Om)$-obstacle problem is easily
seen to be a superminimizer in $\Om$. Conversely, a superminimizer $u$ in $\Om$ is a solution
of the $\K_{u,u}(G)$-obstacle problem for any open  $G \Subset \Om$.

If $\K_{\psi,f}(\Om) \ne \emptyset$,  then
there is a solution $u$ of the $\K_{\psi,f}(\Om)$-obstacle problem,
which is unique up to sets of capacity zero.
Moreover, $u^*$ 
is the unique lsc-regularized solution.
If the obstacle $\psi$ is continuous, then $u^*$
is also continuous.
The obstacle $\psi$, as a continuous function, is even allowed
to take the value $-\infty$. 
See Bj\"orn--Bj\"orn~\cite{BBnonopen}, 
Hansevi~\cite{Hansevi1}
and Kinnunen--Martio~\cite{KiMa02}.

Given $f \in \Dp(\Om)$, we let $\oHpind{\Om} f$ denote the
continuous solution of the $\K_{-\infty,f}(\Om)$-obstacle problem;
this function is \p-harmonic in $\Om$ and takes on the same boundary values
(in the Sobolev sense) as $f$ on $\partial \Om$, and hence it is also
called the solution of the Dirichlet problem with Sobolev boundary values.

\begin{deff} \label{deff-superharm}
A function $u : \Om \to (-\infty,\infty]$ is \emph{superharmonic}
in $\Om$ if
\begin{enumerate}
\renewcommand{\theenumi}{\textup{(\roman{enumi})}}%
\item $u$ is lower semicontinuous;
\item \label{cond-b}
$u$ is not identically $\infty$; 
\item \label{cond-c}
for every nonempty open set $G \Subset \Om$ and all functions
$v \in \Lip(X)$, we have $\oHpind{G} v \le u$ in $G$
whenever $v \le u$ on $\bdy G$.
\end{enumerate}

A function $u : \Om \to [-\infty,\infty)$ is \emph{subharmonic}
in $\Om$ if $-u$ is superharmonic.
\end{deff}

This definition of superharmonicity is equivalent to  the ones in
Hei\-no\-nen--Kil\-pe\-l\"ai\-nen--Martio~\cite{HeKiMa} and
Kinnunen--Martio~\cite{KiMa02}, see Theorem~6.1 in A.~Bj\"orn~\cite{ABsuper}.
A superharmonic function which is either locally bounded or belongs to
$\Nploc(\Om)=\Dploc(\Om)$
is a superminimizer,
and all superharmonic functions are lsc-regularized.
Conversely, any lsc-regularized superminimizer
is superharmonic.

We will also need the following proposition, 
which in this generality is due
to Hansevi~\cite[Theorem~3.4 and Proposition~4.5]{Hansevi2}.

\begin{prop}\label{prop-3.2}
Let\/ $\{f_j\}_{j=1}^\infty$ be a q.e.\
decreasing sequence of functions in $\Dp(\Om)$ such that 
$g_{f_j-f} \to 0$ in $\Lp(\Om)$ as $j \to\infty$.
Then $\oHpind{\Om} f_j$ decreases to $ \oHpind{\Om} f$ 
locally uniformly in\/ $\Om$.

Moreover, if $u$ and $u_j$ are solutions of the $\K_{f,f}(\Om)$- and
$\K_{f_j,f_j}(\Om)$-obstacle problems, $j=1,2,\ldots$\,, then\/
$\{u_j\}_{j=1}^\infty$  decreases q.e.\ in\/ $\Omega $ to $u$.
\end{prop}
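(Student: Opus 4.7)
The plan is to exploit the minimizing characterization of $\oHpind{\Om} f_j$ together with the comparison principle. Write $u_j = \oHpind{\Om} f_j$ and $u = \oHpind{\Om} f$.

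\textbf{Monotonicity.} Since $\{f_j\}$ is q.e.\ decreasing, the comparison principle for the Dirichlet problem with Sobolev boundary data gives that $\{u_j\}$ is q.e.\ decreasing; as each $u_j$ is continuous this in fact holds everywhere, so the pointwise limit $v := \lim_j u_j$ exists and is upper semicontinuous.

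\textbf{Energy bound.} Using $u + (f_j - f)$ as a competitor in the obstacle problem $\K_{-\infty, f_j}(\Om)$ (note $u + (f_j - f) - f_j = u - f \in \Np_0(\Om)$), the minimizing property of $u_j$ yields
\[
\|g_{u_j}\|_{\Lp(\Om)} \;\le\; \|g_u\|_{\Lp(\Om)} + \|g_{f_j - f}\|_{\Lp(\Om)},
\]
so $\{g_{u_j}\}$ is bounded in $\Lp(\Om)$ and $\limsup_j \|g_{u_j}\|_{\Lp(\Om)} \le \|g_u\|_{\Lp(\Om)}$.

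\textbf{Identifying the limit.} Consider $\phi_j := (u_j - f_j) - (u - f) \in \Np_0(\Om)$. Its gradient is bounded in $\Lp(\Om)$ by the previous step and $g_{f_j-f}\to 0$, so Friedrichs' inequality (which holds in our setting since $\Dp_0(\Om)=\Np_0(\Om)$ and $\Cp(X\setm\Om)>0$) shows $\{\phi_j\}$ is bounded in $\Np_0(\Om)$. Extracting a weak limit $\phi \in \Np_0(\Om)$ and using that $g_{f_j-f}\to 0$ strongly while $\phi_j \to v - u$ pointwise, we identify $v - u = \phi \in \Np_0(\Om)$. Hence $v$ is admissible in $\K_{-\infty,f}(\Om)$, and weak lower semicontinuity of the $p$-energy combined with the bound $\limsup_j \|g_{u_j}\|_p \le \|g_u\|_p$ gives $\|g_v\|_p \le \|g_u\|_p$; uniqueness of the \p-harmonic solution up to q.e.\ forces $v = u$ q.e., and continuity of $u$ together with upper semicontinuity of $v$ upgrades this to $v = u$ everywhere. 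Locally uniform convergence on every compact $K\Subset\Om$ is then immediate from Dini's theorem, since $\{u_j\}$ is a pointwise decreasing sequence of continuous functions converging to a continuous limit.

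\textbf{Obstacle part and main obstacle.} For the obstacle problem statement, an analogous comparison principle (monotonicity of solutions in both obstacle and boundary data, as in \cite{KiMa02}) gives q.e.\ monotonicity of $u_j$, while the competitor $u + (f_j-f)^+$ (truncated appropriately so as to stay above the obstacle $f_j$) yields the same type of energy bound, and the limit identification proceeds as above, now only up to q.e.\ since continuity of the solutions is no longer in play. The principal technical obstacle is the limit identification: $u_j - u$ need not lie in $\Np_0(\Om)$, only its perturbation $\phi_j = (u_j-u) - (f_j-f)$ does, so one must carefully transfer the convergence through $\phi_j$ and use strong $\Lp$-convergence of $g_{f_j-f}$ to conclude that $v-u\in\Np_0(\Om)$.
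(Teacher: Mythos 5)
The paper itself contains no proof of this proposition: it is cited as due to Hansevi~\cite[Theorem~3.4 and Proposition~4.5]{Hansevi2}, so your argument cannot be compared line by line with the paper's. That said, your overall strategy---monotonicity by the comparison principle, an energy bound from the explicit competitor $u+(f_j-f)$, passage to a weak limit in $\Np_0(\Om)$ via the correction $\phi_j=(u_j-f_j)-(u-f)$, and uniqueness of the minimizer---is the natural one and is sound in its main lines.

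There is, however, a genuine gap in the step upgrading $v=u$ q.e.\ to $v=u$ everywhere. You claim that ``continuity of $u$ together with upper semicontinuity of $v$'' forces equality everywhere. This implication is false: for $\Om=B(0,1)\subset\R^n$ with $1<p<n$, the pair $u\equiv0$, $v=\chi_{\{0\}}$ satisfies $v\ge u$, $v$ upper semicontinuous, $u$ continuous and $v=u$ q.e.\ (the origin is polar), yet $v(0)\ne u(0)$. Upper semicontinuity only bounds $\limsup$ from above, so it cannot exclude an isolated upward jump on a set of zero capacity; one would need \emph{lower} semicontinuity of $v$, which is not available from a decreasing limit of continuous functions. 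The correct tool is Harnack's convergence theorem (Shanmugalingam~\cite[Proposition~5.1]{Sh-conv}, or \cite[Corollary~9.38]{BBbook}): since $\{u_j\}_{j=1}^\infty$ is a decreasing sequence of \p-harmonic functions bounded below by the \p-harmonic function $u$, the limit $v$ is itself \p-harmonic, hence continuous, and then $v=u$ q.e.\ together with continuity of both gives $v=u$ everywhere, after which Dini's theorem applies. Without this, continuity of $v$ is not established and the ``locally uniform'' part of the conclusion does not follow.

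A few smaller remarks. The extraction of the weak limit uses reflexivity of $\Np_0(\Om)$ (or weak sequential compactness of bounded sets together with the fact that $\Np_0(\Om)$, being a closed convex subspace, is weakly closed), and the lower semicontinuity of the energy requires $u_j\to v$ in $\Lploc(\Om)$; both hold here because $u\le u_j\le u_1$ locally, but these hypotheses should be made explicit. In the obstacle part your competitor should simply be $u+(f_j-f)$: since $u\ge f$ q.e.\ and $f_j\ge f$ q.e., one has $u+(f_j-f)\ge f_j$ q.e.\ automatically, so no truncation is needed and the parenthetical remark is superfluous.
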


For further discussion and references on these topics  see \cite{BBbook}.

\section{Perron solutions with respect to \texorpdfstring{$\Omone$}{}}
\label{sect-Perron}

We are now going to introduce the Perron solutions
with respect to $\Omone$.
The solution $\oHpind{\Om} f$
of the Dirichlet problem with Sobolev boundary values $f$ was introduced in
Section~\ref{sect-superharm}.
It is defined without use of the boundary, and hence
is independent of which compactification of $\Om$ we use.
Similarly, the obstacle problem $\K_{\psi,f}(\Om)$ and its solutions
are independent of the compactification.
For this reason we will often drop $\Om$ from the notation and write
$\oHp f = \oHpind{\Om} f$ and $\K_{\psi,f}=\K_{\psi,f}(\Om)$.

The Perron solutions are however highly dependent on 
the compactification, since that is where their boundary values are defined.

\begin{deff}   \label{def-Perron}
Given 
$f : \bdyone \Om \to \eR$, let $\UU_f(\Omone)$ be the set of all 
superharmonic functions $u$ on $\Om$, bounded from below,  such that 
\begin{equation} \label{eq-def-Perron} 
	\liminf_{\Om \ni y \toone x} u(y) \ge f(x) 
\end{equation} 
for all $x \in \bdyone \Om$.
The \emph{upper Perron solution} of $f$ is then defined to be
\[ 
    \uHpind{\Omone} f (x) = \inf_{u \in \UU_f(\Omone)}  u(x), \quad x \in \Om,
\]
while the \emph{lower Perron solution} of $f$ is defined by
\[ 
    \lHpind{\Omone} f = - \uHpind{\Omone} (-f).
\]
(It can equivalently be defined using subharmonic functions bounded from above.)
If $\uHpind{\Omone} f = \lHpind{\Omone} f$
and it is real-valued, then we let $\Hpind{\Omone} f := \uHpind{\Omone} f$
and $f$ is said to be \emph{resolutive} with respect to $\Omone$.

Further, let $\DU_f(\Omone)=\UU_f(\Omone) \cap \Dp(\Om)$,
and define the Sobolev--Perron solutions of $f$ by
\begin{equation} \label{eq-Sp} 
    \uSpind{\Omone} f (x) = \inf_{u \in \DU_f(\Omone)}  u(x),\ x \in \Om,
\quad    \text{and}  \quad
    \lSpind{\Omone} f  = -\uSpind{\Omone} (-f).
\end{equation}
If $\uSpind{\Omone} f = \lSpind{\Omone} f$ and it is real-valued, 
then we let $\Spind{\Omone} f := \uSpind{\Omone} f$
and $f$ is said to be \emph{Sobolev-resolutive} with respect to $\Omone$.
\end{deff}

Note that, as $\Om$ is bounded, the upper and lower Sobolev--Perron
solutions for \emph{bounded} $f$ can equivalently be defined 
taking the infimum in \eqref{eq-Sp} over
$u \in \UU_f(\Omone) \cap \Np(\Om)$.

There are two main reasons for why we have chosen to introduce
the Sobolev--Perron solutions.
First of all, in most of our resolutivity results we are
able to deduce Sobolev-resolutivity (which is a stronger condition
by Corollary~\ref{cor-uHp-lHp} and 
Examples~\ref{ex-Sobolev-resolutivity} and~\ref{ex-resol-not-Sob}).
Secondly, in Proposition~\ref{prop-strres-invariance} we show that
Sobolev--Perron solutions are always invariant under perturbations
on sets of capacity zero, while for the standard Perron solutions
this is only known for some functions.
Moreover, for
the results in Section~\ref{sect-quasisemi} it is essential
that we use Sobolev--Perron solutions (at least for our methods).

For the metric boundary,
$\uHpind{\Om} f$ is \p-harmonic unless it is identically 
$\pm \infty$,
see Theorem~4.1 in Bj\"orn--Bj\"orn--Shan\-mu\-ga\-lin\-gam~\cite{BBS2}
(or \cite[Theorem~10.10]{BBbook}).
The proof therein applies also to $\uHpind{\Omone}f$ without any change,
whereas for $\uSpind{\Omone}f$ one also needs to observe that
the Poisson modification of a superharmonic function in $\Dp(\Om)$ belongs 
to $\Dp(\Om)$, which is rather immediate.

A direct 
consequence of the above definition is that 
if $f_1 \le f_2$, then 
\begin{equation} \label{eq-f1<f2}
    \uHpind{\Omone}f_1\le \uHpind{\Omone}f_2
    \quad \text{and} \quad
    \uSpind{\Omone}f_1\le \uSpind{\Omone}f_2.
\end{equation}
The following comparison principle makes it possible to compare
the upper and lower Perron solutions.

\begin{prop} \label{prop-comparison}
Assume  that $u$ is superharmonic and that $v$ is subharmonic in\/ $\Om$. If 
\[ 
       \infty \ne  \limsup_{\Om \ni y \toone x} v(y)
        \le \liminf_{\Om \ni y \toone x} u(y) \ne -\infty
        \quad \text{for all } x \in \bdyone \Om,
\] 
then $v \le u$ in\/ $\Om$. 
\end{prop}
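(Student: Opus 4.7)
The plan is to argue by contradiction and reduce to the classical comparison principle on a metrically compactly contained subset of $\Om$. Suppose there exist $x_0 \in \Om$ and $\delta > 0$ with $v(x_0) > u(x_0) + 2\delta$; note that $u(x_0)$ and $v(x_0)$ are then automatically finite since $u > -\infty$ and $v < \infty$ on $\Om$ by super- and sub-harmonicity.

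First I would convert the $\tauone$-boundary hypothesis into a pointwise bound on a $\tauone$-neighborhood of $\bdyone\Om$. For every $x \in \bdyone\Om$, using $\limsup v(y) \le \liminf u(y)$ as $y \toone x$, together with the finiteness conditions $\limsup v \ne \infty$ and $\liminf u \ne -\infty$, I would produce a $\tauone$-open neighborhood $V_x$ of $x$ on which $v(y) < u(y) + \delta$ for all $y \in V_x \cap \Om$. This requires a brief case split: when both limits are finite a standard $\eps$-argument suffices, while when $\liminf u = \infty$ one exploits that $u(y) \to \infty$ on small $\tauone$-neighborhoods of $x$ to absorb the bounded $\limsup v$, and symmetrically when $\limsup v = -\infty$. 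Compactness of $\bdyone\Om$ then produces a finite subcover $V := \bigcup_j V_{x_j}$, so that $K := \clOmone \setm V$ is $\tauone$-compact and contained in $\Om$.

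If $x_0 \in V \cap \Om$ the pointwise bound immediately gives $v(x_0) < u(x_0) + \delta$, contradicting the assumption. Otherwise $x_0 \in K$, and since $X$ is proper I may choose an open set $G$ with $K \subset G \Subset \Om$. Its metric boundary $\bdy G$ lies inside $V \cap \Om$, so at every $y \in \bdy G$ the pointwise inequality $v(y) < u(y) + \delta$, combined with the upper semicontinuity of $v$ and the lower semicontinuity of $u$, yields
\[
   \limsup_{G \ni z \to y} v(z) \le v(y) < u(y) + \delta \le \liminf_{G \ni z \to y} (u(z) + \delta),
\]
with $\limsup v(z)$ bounded above by $v(y) < \infty$ and $\liminf(u(z) + \delta)$ bounded below by $u(y) + \delta > -\infty$. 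Since $u + \delta$ is again superharmonic, the standard comparison principle on the compactly contained open set $G$ then forces $v \le u + \delta$ throughout $G$, in particular at $x_0$, contradicting $v(x_0) > u(x_0) + 2\delta$.

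The main obstacle is the case analysis in the construction of the neighborhoods $V_x$: the inequality $\limsup v \le \liminf u$ on its own does not provide a pointwise gap unless the finiteness hypotheses are invoked to absorb any infinite limit into the appropriate $\tauone$-neighborhood. Once this passage from $\tauone$-boundary data to a pointwise estimate on a $\tauone$-collar of $\bdyone\Om$ is in place, the compactness of $\bdyone\Om$ and the properness of $X$ push the problem inside a metrically compactly contained domain $G$, where the conclusion follows from the well-established metric-boundary comparison principle for sub- and superharmonic functions.
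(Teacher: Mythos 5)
Your proof is correct and follows essentially the same route as the paper's, which simply defers to the argument of \cite[Proposition~7.2]{BBSdir} with Mazurkiewicz balls $B_x^M$ replaced by $\tauone$-neighbourhoods: use the limsup/liminf hypothesis at each boundary point to produce a $\tauone$-neighbourhood on which $v<u+\delta$, invoke compactness of $\bdyone\Om$ to obtain a finite cover, and reduce to the metric-boundary comparison principle on a compactly contained open set $G\Subset\Om$.
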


The corresponding result 
with respect to the given  metric $d$  was obtained in
Kinnunen--Martio~\cite[Theorem~7.2]{KiMa02},
while for the Mazurkiewicz distance $d_M$
(when it gives a compactification,
i.e.\ when $\Om$ is finitely connected at the boundary) it was given
as Proposition~7.2 in Bj\"orn--Bj\"orn--Shanmugalingam~\cite{BBSdir}.
See also Estep--Shanmugalingam~\cite[Proposition~7.3]{ES}.

\begin{proof} 
The proof of Proposition~7.2 in \cite{BBSdir} applies almost verbatim.
The only slight difference is that instead of taking a ball $B_x^M \ni x$
one
should choose a $\tauone$-neighbourhood of $x$.
(The proof in \cite{KiMa02} does not generalize so easily
to arbitrary compactifications.)
\end{proof}

\begin{cor} \label{cor-uHp-lHp}
If $f:\bdyone\Om\to\R$, then  
\[
    \lSpind{\Omone} f \le  \lHpind{\Omone} f 
    \le \uHpind{\Omone} f  \le \uSpind{\Omone} f.
\]
In particular, if $f$ is Sobolev-resolutive, then $f$ is resolutive.
\end{cor}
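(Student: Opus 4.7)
The plan is to handle the three inequalities separately and then derive the resolutivity implication as a direct consequence.

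The two outer inequalities follow almost immediately from set inclusion. Since $\DU_f(\Omone) = \UU_f(\Omone) \cap \Dp(\Om) \subseteq \UU_f(\Omone)$, taking the pointwise infimum over the smaller family yields a (pointwise) larger function, so $\uSpind{\Omone} f \ge \uHpind{\Omone} f$. Applying this inequality to $-f$ in place of $f$ and negating gives $\lSpind{\Omone} f \le \lHpind{\Omone} f$. No hypothesis on finiteness is needed here, since we interpret the infimum of the empty family as $+\infty$ in the usual way.

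The middle inequality $\lHpind{\Omone} f \le \uHpind{\Omone} f$ is where the comparison principle (Proposition~\ref{prop-comparison}) enters. Fix $u \in \UU_f(\Omone)$ and $w \in \UU_{-f}(\Omone)$, and set $v := -w$, which is subharmonic in $\Om$. By the definition of $\UU_f$ and $\UU_{-f}$,
\[
    \limsup_{\Om \ni y \toone x} v(y) = -\liminf_{\Om \ni y \toone x} w(y) \le f(x) \le \liminf_{\Om \ni y \toone x} u(y)
\]
for every $x \in \bdyone \Om$. The $\pm\infty$ side conditions of Proposition~\ref{prop-comparison} are automatic: $u$ is bounded from below by definition of $\UU_f$, so its liminf is $\ne -\infty$; and $w$ is bounded from below, so $v = -w$ is bounded from above and its limsup is $\ne +\infty$. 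The comparison principle therefore gives $v \le u$ in $\Om$, i.e.\ $-w \le u$. Taking the infimum over $w \in \UU_{-f}(\Omone)$ yields $\lHpind{\Omone} f = -\uHpind{\Omone}(-f) \le u$ in $\Om$, and then the infimum over $u \in \UU_f(\Omone)$ yields the desired middle inequality. If either $\UU_f(\Omone)$ or $\UU_{-f}(\Omone)$ is empty the inequality is vacuous.

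For the ``In particular'' statement, assume $f$ is Sobolev-resolutive, so that $\lSpind{\Omone} f = \uSpind{\Omone} f$ and this common value is real-valued. The displayed four-fold inequality then forces $\lHpind{\Omone} f = \uHpind{\Omone} f$ and both are real-valued, which is precisely resolutivity of $f$. The only genuinely non-bookkeeping step is the middle inequality, which is a routine application of the comparison principle, so I do not anticipate any real obstacle.
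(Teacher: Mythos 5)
Your proof is correct and takes the same approach the paper implicitly intends: the corollary is stated directly after Proposition~\ref{prop-comparison} (the comparison principle) without an explicit proof, and the two ingredients you use — the inclusion $\DU_f(\Omone)\subseteq\UU_f(\Omone)$ for the outer inequalities and the comparison principle for the middle one — are exactly what that placement signals. Your handling of the $\pm\infty$ side conditions and the empty-family case is careful bookkeeping that the paper leaves to the reader.
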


The following is one of the main results in this theory.

\begin{thm} \label{thm-Newt-resolve-Omm}
Let $f: \clOmone \to \eR$ be a $\bCp(\,\cdot\,;\Omone)$-quasi\-con\-tin\-u\-ous function
such that $f|_{\Om} \in \Dp(\Om)$. 
Then $f$ is Sobolev-resolutive with respect to\/ $\Omone$ 
and 
\[
   \Spind{\Omone} f = \Hpind{\Omone} f = \oHp f.
\]
\end{thm}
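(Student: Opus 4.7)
The strategy is to combine symmetry, a barrier built from the $\bCp$-quasicontinuity, and a Kellogg-type regularity input for $\oHp f$. By Corollary~\ref{cor-uHp-lHp} and the symmetry $\lSpind{\Omone}f=-\uSpind{\Omone}(-f)$, it suffices to prove $\uSpind{\Omone}f\le\oHp f$ pointwise on $\Om$; applying this to $-f$ then yields $\lSpind{\Omone}f\ge\oHp f$, and the chain $\lS\le\lP\le\uP\le\uS$ forces all four quantities to coincide with $\oHp f$.

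Using the outer-capacity property of $\bCp$ (Proposition~\ref{prop-outercap}) together with the $\bCp$-quasicontinuity of $f$, I would first pick a decreasing sequence of $\tauone$-open sets $U_k\subset\clOmone$ with $\bCp(U_k;\Omone)<2^{-kp}$ such that $f|_{\clOmone\setminus U_k}$ is real-valued continuous. Enlarging each $U_k$ if necessary, I would also arrange it to contain the cap-zero set of boundary points that are irregular for $\oHp f$ (a Kellogg-type input in this setting). Lemma~\ref{lem-Newt} (with $j=1$) then produces a nonnegative $\psi\in\Np(\Om)$ with finite Newton norm and $\psi\ge k-1$ pointwise on $U_k\cap\Om$; by choosing the constituent $f_k$ inside the proof of that lemma as lsc-regularized solutions of capacitary obstacle problems, $\psi$ can be taken superharmonic on $\Om$. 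In particular $\liminf_{\Om\ni y\toone x}\psi(y)=\infty$ for every $x\in\bigcap_k U_k\cap\bdyone\Om$.

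Set $v_\eps:=\oHp f+\eps\psi$ for $\eps>0$. This is superharmonic (sum of a \p-harmonic and a superharmonic function) and lies in $\Dp(\Om)$. For $x\in\bdyone\Om$ there are two cases. If $x\in\bigcap_k U_k$, the blow-up of $\psi$ trivially gives $\liminf v_\eps(y)=\infty$. Otherwise $x\notin U_{k_0}$ for some $k_0$, so $f$ is continuous at $x$ and $x$ is a regular boundary point for $\oHp f$ by the Kellogg-type preparation, whence $\liminf_{y\toone x}\oHp f(y)\ge f(x)$, and since $\psi\ge 0$ we deduce $\liminf v_\eps(y)\ge f(x)$. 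Hence $v_\eps\in\DU_f(\Omone)$ and $\uSpind{\Omone}f\le v_\eps=\oHp f+\eps\psi$ at every point of $\Om$ where $\psi<\infty$.

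Letting $\eps\to 0$ gives $\uSpind{\Omone}f\le\oHp f$ off a set of $\bCp$-capacity zero, which has measure zero in $\Om$. Both $\uSpind{\Omone}f$ (which is \p-harmonic by the remark after Definition~\ref{def-Perron}) and $\oHp f$ are continuous, so the inequality extends to all of $\Om$. The main obstacle is the boundary-liminf verification in the second case: one must translate the Kellogg property into the arbitrary-compactification setting with a merely quasicontinuous $f$, while the barrier $\psi$ must simultaneously absorb the discontinuity set of $f$ and the irregular boundary set of $\oHp f$. This is the step that most essentially exploits the outer-capacity property of $\bCp$ (Proposition~\ref{prop-outercap}).
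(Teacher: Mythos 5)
Your strategy breaks at a fundamental point: you claim that $v_\eps=\oHp f+\eps\psi$ is superharmonic because it is "a sum of a \p-harmonic and a superharmonic function." This is only true in \emph{linear} potential theory. The \p-Laplace equation \eqref{eq-p-Laplace} is nonlinear, and sums of \p-superharmonic functions are not \p-superharmonic (nor are sums of a \p-harmonic function and a nonnegative \p-superharmonic function). The same issue infects your preparatory claim that the $\psi$ produced by Lemma~\ref{lem-Newt} can be taken superharmonic: the lemma builds $\psi_j$ as a \emph{sum} $\sum_{k>j} f_k$, and even if each $f_k$ is chosen as a capacitary potential (hence superharmonic), the sum is not. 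This is precisely why the paper's proof (following \cite{BBS2}, \cite{BBSdir}) does not add a barrier to $\oHp f$ but instead solves a sequence of \emph{obstacle problems} $\K_{f_j,f_j}(\Om)$ with obstacles $f_j=f+\psi_j$; the lsc-regularized solutions $\phi_j$ are superharmonic by construction, dominate $f_j$ q.e., belong to $\Dp(\Om)$, and by Proposition~\ref{prop-3.2} decrease q.e.\ to $\oHp f$ because $g_{f_j-f}\to0$ in $\Lp(\Om)$. The obstacle machinery is exactly the nonlinear substitute for the linear "add a barrier" trick you are trying to run.

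There is a second gap. Your verification of $\liminf_{\Om\ni y\toone x}v_\eps(y)\ge f(x)$ at points $x\notin\bigcap_kU_k$ relies on a "Kellogg-type" statement that $\oHp f$ attains the boundary value $f(x)$ at q.e.\ $x\in\bdyone\Om$. The paper never establishes such a statement for arbitrary compactifications, and it is not available: the Kellogg property in \cite{ABkellogg} is for the metric boundary, and in any case translating "regular boundary point" into boundary convergence of $\oHp f$ toward a merely $\bCp$-quasicontinuous $f$ on an arbitrary $\clOmone$ is essentially the content of the theorem you are trying to prove, so the argument is circular. The paper's obstacle approach instead gets the required boundary $\liminf$ directly from $\phi_j\ge f_j=f+\psi_j$ q.e.\ together with the blow-up of $\psi_j$ on the $U_k$'s and the continuity of $f$ off $U_k$, with no regularity input about $\oHp f$ at the boundary. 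To repair your argument you would need to replace the additive barrier by the obstacle solutions $\phi_j$ and drop the Kellogg input; at that point you have reproduced the paper's proof.
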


There are several earlier versions of this result, the first in 
Bj\"orn--Bj\"orn--Shan\-mu\-ga\-lin\-gam~\cite[Theorem~5.1]{BBS2}. 
This was subsequently generalized
in 
\cite[Theorem~10.12]{BBbook},
Bj\"orn--Bj\"orn--Shanmugalingam~\cite[Theorem~9.1]{BBSdir}
and Hansevi~\cite[Theorem~8.1]{Hansevi2} for the metric
boundary.
Hansevi's result also applies to some 
unbounded domains. 
In \cite[Theorem~7.4]{BBSdir} a similar result was obtained for
the Mazurkiewicz boundary of domains which are
finitely connected at the boundary.
All of these results give resolutivity,
but the proofs actually also show Sobolev-resolutivity, 
a fact that has not been noticed earlier, as 
the Sobolev--Perron solutions have
not been introduced before this paper.

The following examples show that Sobolev-resolutivity can be quite restrictive
compared with resolutivity, nevertheless we obtain
Sobolev-resolutivity in Theorem~\ref{thm-Newt-resolve-Omm}
(whose proof we postpone until after the examples).

\begin{example} \label{ex-Sobolev-resolutivity}
Let $\Omega$ be a ball in $\R^n$, $n \ge 2$,
and let $p>n$. 
Let $E \subset \bdy \Om$ be a dense subset such that also $\bdy \Om \setm E$
is dense in $\bdry\Om$, and let $f=\chi_E$.
As $\Om$ is an extension domain, the Sobolev embedding theorem 
shows that any bounded $u \in \DU_f$ has a continuous extension to 
$\clOm$.
Hence $u \geq 1$ everywhere in $\Omega$, so $\uSpind{\Om} f \geq 1$. 
Similarly, $\lSpind{\Om} f \leq 0$. 
Thus $f=\chi_E$ is not Sobolev-resolutive.
 In particular we may chose $E$ to be countable,
in which case $f$ is resolutive 
and $\Hpind{\Om} f \equiv 0$
by Theorem~1.3
in A.~Bj\"orn~\cite{ABjump}.
\end{example}

\begin{example} \label{ex-resol-not-Sob}
Let $\Om=(0,1)^n$ be the open unit cube in $\R^n$, $n \ge 2$, 
and let $1 < p \le n$.
We shall construct a subset $E \subset \bdy \Om$ with 
zero $(n-1)$-dimensional measure,
such that
$\uSpind{\Om} \chi_E \equiv 1$. 

In the case when $p=2$, it follows directly that 
$E$ has zero harmonic measure, so that $\Hpind{\Om} \chi_E \equiv 0$, and thus
also $\lSpind{\Om} \chi_E \equiv 0$.
Hence, $\chi_E$ is resolutive but not Sobolev-resolutive.

The last deduction, using the harmonic measure, only works in the linear case,
but the construction of $E$ upto that point is as valid in the nonlinear
situation as in the linear. 
To emphasize this we give the construction for 
general $1<p \le n$.

Let $\Ct \subset [0,1]$ be a selfsimilar Cantor set with 
endpoints $0$ and $1$ and self\-simi\-lar\-ity constant 
$0<\alp<\frac{1}{2}$, i.e.\
$\alp$ is the largest value such that $\alp \Ct = \Ct \cap [0,\alp]$.
Next, let $C=\Ct^{n-1}$,
\[
   \Et=[0,1]^{n-1} \cap (C + \Q^{n-1})
   := \{x+q \in [0,1]^{n-1} : x \in C \text{ and } q \in \Q^{n-1}\}
\]
and let $E$ be the union of $\Et$
and the affine copies of $\Et$ to the other faces of $[0,1]^n$.
As $\Q$ is countable, 
$\dimH E = \dimH  C=- (n-1)\log 2 /{\log \alp}$.
From now on, we require $\alp$ to be so large that 
$\dimH E  > n-p$.

Next, let $\ut \in  \DU_{\chi_E}(\Om) $.
Then $u=\min\{\ut,1\} \in  \UU_{\chi_E}(\Om) \cap \Np(\Om)$.
As $\Om$ is an extension domain 
we may assume that $u \in \Np(\R^n)$.
We shall show below that $u=1$ q.e.\ on $\bdy \Om$.
Since $u$ is an lsc-regularized solution of the 
$\K_{u,u}(\Om)$-obstacle problem (by Proposition~7.15 in~\cite{BBbook}),
the comparison principle (Lemma~8.30 in~\cite{BBbook}) then implies
that $u \ge \oHp 1 =1$ in $\Om$,
and thus $\uSpind{\Om} \chi_E \equiv 1$. 

To show that $u=1$ q.e.\ on $\bdy \Om$, we will use 
fine continuity, 
and we therefore need to introduce some
more terminology.
A set $A \subset \R^n$ is \emph{thin} at $x$ if 
\[ 
\int_0^1 \biggl( \frac{\cp(A\cap B(x,t),B(x,2t))}{\cp(B(x,t),B(x,2t))}
           \biggr)^{1/(p-1)} \frac{dt}{t} < \infty,
\] 
where $\cp$ is the variational capacity
defined by
\[
   \cp(A,B)=\inf \int_{B} g_u^p\, dx,
\]
where the infimum is taken over all $u \in \Np_0(B)$ such that $u=1$ on $A$.
A set $U\subset \R^n$ is \emph{finely open} if $X\setm U$ is thin
at every $x\in U$.
By J.~Bj\"orn~\cite[Theorem~4.6]{JB-pfine} or
Korte~\cite[Corollary~4.4]{korte08} (or \cite[Theorem~11.40]{BBbook}),
any Newtonian function, and in particular $u$, is
finely continuous q.e.
Thus there is a set $A_u$ with zero capacity such
that $u$ is finely continuous at  
all $x \in \R^n \setm A_u$.
We claim that $E$ is nonthin at every $x\in\bdry\Om$. 
From this it follows that every fine neighbourhood of $x$ contains
points in $E$. Hence $u(x)=\lim_{E \ni  y \to x} u(y)= 1$ if 
$x \in \bdy\Om \setm A_u$.

To show that $E$ is nonthin at every  
$x\in\bdy \Om$, assume without loss of generality that
$x \in \bigl[0,\tfrac{1}{2}\bigr]^{n-1}$.
Let $B_s=B(0,s)$.
By the selfsimilarity of $C$ and Theorem~2.27 in 
Heinonen--Kilpel\"ainen--Martio~\cite{HeKiMa},
we see that
\begin{equation}   \label{eq-selfsim-al}
    \cp(C \cap B_{\alp t},B_{5\alp t})
    = \alp^{n-p}     \cp(C \cap B_t,B_{5t}) > 0
    \quad \text{if } 0 <t< 1,
\end{equation}
since $\dimH E  > n-p$.
Also 
\begin{equation}   \label{eq-B-al}
    \cp(B(x,\alp t) ,B(x,2\alp t))
    = \alp^{n-p}     \cp(B_t ,B_{2t})
    \quad \text{if } t>0.
\end{equation}
If $0 < t < \frac{1}{2}$, then we can find 
$q \in \Q^{n-1}\cap\bigl[0,\tfrac{1}{2}\bigr]^{n-1}$ such that $|q-x|<t/2$.
By the monotonicity and translation invariance of $\cp$, 
\begin{align*}
\cp(E \cap B(x,t) , B(x,2t)) 
   & \ge \cp(E \cap B(q,t/2), B(q,5t/2)) \\
   & \ge  \cp(C \cap B_{t/2},B_{5t/2}).
\end{align*}
It follows from this and the scaling invariances~\eqref{eq-selfsim-al}
and~\eqref{eq-B-al} that 
for $m=1,2,\ldots$,
\begin{align*}
\int_{\alp^{m+1}}^{\alp^m} \biggl( 
     \frac{\cp(E \cap B(x,t),B(x,2t))}
          {\cp(B(x,t),B(x,2t))} \biggr)^{1/(p-1)} \frac{dt}{t} 
          \kern -15em\\
   &        \ge 
   \int_{\alp^{m+1}}^{\alp^m} \biggl( 
    \frac{\cp(C \cap B_{t/2},B_{5t/2})}
   {\cp(B_t ,B_{2t})} \biggr)^{1/(p-1)} \frac{dt}{t}  
\\
   &        =  
   \int_{\alp^{2}}^{\alp} \biggl( 
    \frac{\cp(C \cap B_{t/2},B_{5t/2})}
   {\cp(B_t ,B_{2t})} \biggr)^{1/(p-1)} \frac{dt}{t}  \\   
   & > 0.
\end{align*}
Hence
\[
  \int_{0 }^{1} \biggl( 
     \frac{\cp(E \cap B(x,t),B(x,2t))}
          {\cp(B(x,t),B(x,2t))} \biggr)^{1/(p-1)} \frac{dt}{t}
          = \infty,
\]
i.e.\ $E$ is nonthin at $x$, which concludes the argument.
\end{example}

\begin{proof}[Proof of Theorem~\ref{thm-Newt-resolve-Omm}]
The proof is very close to the one given for Theorem~7.4 in \cite{BBSdir}. 
Therein, properties related to $\bCp(\,\cdot\,;\Om)$ 
and $\clOm$ need to be replaced
by similar ones for $\bCp(\,\cdot\,;\Omone)$ and $\clOmone$, 
which are provided by 
Proposition~\ref{prop-qcont-Omone-Omtwo}, 
Lemma~\ref{lem-Newt} and Corollary~\ref{cor-uHp-lHp}  here.
As we also want to use functions merely in $\Dp$ we need to use
Proposition~\ref{prop-3.2}.
Moreover,
the open sets and neighbourhoods need to be taken with respect to
the $\tauone$-topology on $\clOmone$.

To deduce Sobolev-resolutivity, 
it suffices to observe that the solutions
$\phi_j$ of the  obstacle problems
in the proof 
belong to $\DU_f(\Omone)$ (which shows that 
$\phi_j \ge \uSpind{\Omone} f$),
and to replace 
$\uHpind{\Om^M} f$ and $\lHpind{\Om^M} f$ by 
$\uSpind{\Omone} f$ and $\lSpind{\Omone} f$
in the remaining part of the proof. 
Finally, 
$\Hpind{\Omone} f = \Spind{\Omone} f$,
by Corollary~\ref{cor-uHp-lHp}.
\end{proof}

We end this section by comparing Perron solutions with respect to
different compactifications.

\begin{thm} \label{thm-res-prec}
Let $\partial^1 \Omega \prec \partial^2 \Omega$, 
$\Phi : \clOmtwo \to \clOmone$ 
denote the projection, 
and let $f : \partial^1 \Omega \rightarrow \eR$. Then  
\begin{equation} \label{eq-B}
\uP_{\Om^1}f = \uP_{\Om^2}({f \circ \Phi}) 
\quad \text{and} \quad
\lP_{\Om^1}f = \lP_{\Om^2}({f \circ \Phi}).
\end{equation}
In particular,
 $f$ is resolutive if and only if 
$f \circ \Phi: \partial^2 \Omega \rightarrow \R$ is resolutive,
and in that case 
$P_{\Om^1}f = P_{\Om^2}({f \circ \Phi})$.

Similarly
\[
\uS_{\Om^1}f = \uS_{\Om^2}({f \circ \Phi}) 
\quad \text{and} \quad
\lS_{\Om^1}f = \lS_{\Om^2}({f \circ \Phi}),
\]
and $f$ is Sobolev-resolutive if and only if $f \circ \Phi$ is 
Sobolev-resolutive, in which case 
$S_{\Om^1}f = S_{\Om^2}({f \circ \Phi})$.
\end{thm}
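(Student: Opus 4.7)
The plan is to reduce all four identities to the single set equality $\UU_f(\Omone) = \UU_{f \circ \Phi}(\Omtwo)$. From this, the first equality in \eqref{eq-B} is immediate from the pointwise infimum definition of $\uP$, the second follows by applying the first to $-f$, and the resolutivity plus value statement $\Hpind{\Omone}f=\Hpind{\Omtwo}(f\circ\Phi)$ are automatic consequences. The Sobolev case reduces analogously to $\DU_f(\Omone) = \DU_{f\circ\Phi}(\Omtwo)$, which follows from the non-Sobolev version because the space $\Dp(\Om)$ is intrinsic to the underlying metric space and is unaffected by the choice of compactification.

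Before proving the set equality, I would record one topological fact: $\Phi(\bdytwo\Om)\subset\bdyone\Om$. Indeed, if $z\in\bdytwo\Om$ then by density there is a net $y_\lambda\in\Om$ with $y_\lambda\to z$ in $\tautwo$, so $y_\lambda=\Phi(y_\lambda)\to\Phi(z)$ in $\tauone$ by continuity; if $\Phi(z)$ were in $\Om$ then $y_\lambda\to\Phi(z)$ already in the original topology of $\Om$, hence also in $\tautwo$, and Hausdorffness of $\clOmtwo$ would force $z=\Phi(z)\in\Om$, a contradiction.

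For the inclusion $\UU_f(\Omone)\subset\UU_{f\circ\Phi}(\Omtwo)$, take $u\in\UU_f(\Omone)$ and $z\in\bdytwo\Om$. Given $\eps>0$, the boundary condition at $\Phi(z)\in\bdyone\Om$ supplies a $\tauone$-open neighbourhood $V$ of $\Phi(z)$ with $u>f(\Phi(z))-\eps$ on $V\cap\Om$; by continuity of $\Phi$, the set $\Phi^{-1}(V)$ is a $\tautwo$-neighbourhood of $z$, and since $\Phi|_\Om=\id$ it meets $\Om$ in exactly $V\cap\Om$. Sending $\eps\to 0$ yields the required liminf inequality at $z$. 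Note that superharmonicity and lower boundedness of $u$ are conditions on $\Om$ alone and independent of the compactification.

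The reverse inclusion is the main point, and I would argue by contradiction. Given $u\in\UU_{f\circ\Phi}(\Omtwo)$ and $x\in\bdyone\Om$, suppose the liminf at $x$ falls strictly below some $\alpha<f(x)$; then for every $\tauone$-neighbourhood $V$ of $x$ we can choose $y_V\in V\cap\Om$ with $u(y_V)<\alpha$. The net $(y_V)$ converges to $x$ in $\tauone$, so by compactness of $\clOmtwo$ some subnet converges in $\tautwo$ to a point $z\in\clOmtwo$. Continuity of $\Phi$ and Hausdorffness of $\clOmone$ then force $\Phi(z)=x$, and by the preliminary observation $z\in\bdytwo\Om$; but this subnet exhibits a $\tautwo$-approach to $z$ along which $u$ stays below $\alpha<(f\circ\Phi)(z)$, contradicting $u\in\UU_{f\circ\Phi}(\Omtwo)$. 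The principal obstacle, and the reason for working with nets rather than sequences throughout, is that neither $\clOmone$ nor $\clOmtwo$ need be metrizable.
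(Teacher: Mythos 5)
Your argument is correct and rests on the same key idea as the paper's proof: reduce everything to the equality $\UU_f(\Omone)=\UU_{f\circ\Phi}(\Omtwo)$, with the nontrivial inclusion handled via compactness of $\clOmtwo$; the paper does this constructively (covering the fiber $\Phi^{-1}(y)$ by good $\tautwo$-neighbourhoods $G_z$ and taking $U=\clOmone\setminus\Phi(\clOmtwo\setminus\bigcup_z G_z)$), whereas you package the same compactness argument as a contradiction via convergent subnets. One small remark: at the point where you conclude $z\in\bdytwo\Om$, the relevant fact is simply $\Phi|_\Om=\id$ (so $\Phi(z)=x\in\bdyone\Om$ forces $z\notin\Om$), not the preliminary observation $\Phi(\bdytwo\Om)\subset\bdyone\Om$; the latter is instead what guarantees $f\circ\Phi$ is well-defined on $\bdytwo\Om$ in the first place.
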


\begin{proof}
Let $u \in \UU_{f \circ \Phi}(\Om^2)$.
We shall 
prove that $u \in \UU_f(\Om^1)$, i.e.\
that for any $\eps >0$ and any $y \in \partial^1 \Om$ there is a 
$\tau^1$-neighbourhood $U$ of $y$ in $\overline{\Om}^1$ such that 
\begin{equation} \label{eq-A}
u(x) + \eps > f(y)
\quad \text{for all }
x \in U \cap \Om.
\end{equation}
(If $f(y)=\pm \infty$ we instead require that $\pm u(x) > 1/\eps$.)
To do so, 
note that by definition there is for every $z \in \Phi^{-1}(y)$ 
a $\tau^2$-neighbourhood $G_z$ of $z$ in $\overline{\Om}^2$ 
such that $u(x) + \eps > f(\Phi(z))=f(y)$ for all $x \in G_z \cap \Om$. 
Let 
\[ 
K= \Phi(\clOmtwo \setminus G),
\quad \text{where }
G= \bigcup_{z \in \Phi^{-1}(y)} G_z.
\]
Then $K$ is a compact subset of $\overline{\Om}^1$, and 
$U = \overline{\Om}^1 \setminus K$ is a $\tau^1$-neighbourhood of $y$. 
Now clearly $\Phi^{-1}(U) \subset G$ and hence 
\eqref{eq-A} holds, 
as required.

We have thus shown that $\UU_{f \circ \Phi}(\Om^2) \subset \UU_f(\Om^1)$.
The converse inclusion can be shown similarly, and thus
the first
identity in \eqref{eq-B} holds.
The other three identities are shown similarly.
 \end{proof}

\section{Invariance for Sobolev-resolutive functions}
\label{sect-Sobolev-res}

\begin{prop}  \label{prop-strres-invariance}
Let $f,h:\bdyone\Om\to\eR$ 
and assume that $h$ 
is zero  $\bCp(\,\cdot\,;\Omone)$-q.e.,
i.e.\ that 
\[
\bCp(\{x \in \bdyone \Om : h(x) \ne 0\};\Omone)=0.
\]
Then $\uS_\Omone f = \uS_\Omone(f+h)$.

In particular, if $f$ is Sobolev-resolutive then so is $f+h$ and 
$
   \Spind{\Omone} (f+h) = \Spind{\Omone} f
$.
\end{prop}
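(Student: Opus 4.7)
The plan is first to reduce the statement to the one-sided inequality
$\uS_\Omone(f+h) \le \uS_\Omone f$; the opposite direction then follows by applying this with $(f+h,-h)$ in place of $(f,h)$, and the ``In particular'' statement by combining the two inequalities with the corresponding ones for $(-f,-h)$ together with the definition of $\lS$. The key idea is to produce, for each $j$, a superharmonic function $w_j \in \Dp(\Om)$ with $\|w_j\|_{\Np(\Om)} \to 0$ and
\[
\liminf_{\Om \ni y \toone x} w_j(y) = \infty
\quad \text{for every } x \in E:=\{x\in\bdyone\Om:h(x)\ne 0\}.
\]
For any $u \in \DU_f(\Omone)$ the function $u + w_j$ is then superharmonic, bounded from below, lies in $\Dp(\Om)$, and at each $x\in\bdyone\Om$ satisfies $\liminf_{\Om\ni y\toone x}(u+w_j)(y)\ge f(x)+h(x)$: this is immediate on $\bdyone\Om\setm E$ since $w_j\ge 0$ and $h(x)=0$ there, while on $E$ the blow-up of $w_j$ dominates $h(x)$. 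Hence $u+w_j \in \DU_{f+h}(\Omone)$, and taking the infimum over $u$ gives $\uS_\Omone(f+h) \le \uS_\Omone f + w_j$.

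To construct the bumps, I would combine Proposition~\ref{prop-outercap} with Lemma~\ref{lem-Newt}. Since $\bCp(\,\cdot\,;\Omone)$ is an outer capacity and $\bCp(E;\Omone)=0$, one can choose decreasing $\tauone$-open sets $U_k \supset E$ in $\clOmone$ with $\bCp(U_k;\Omone) < 2^{-kp}$; Lemma~\ref{lem-Newt} then yields nonnegative $\psi_j \in \Np(\Om)$ with $\|\psi_j\|_{\Np(\Om)} < 2^{-j}$ and $\psi_j \ge k-j$ pointwise on $U_k\cap\Om$. To obtain a superharmonic replacement, let $w_j$ be the lsc-regularization of the solution of the $\K_{\psi_j,\psi_j}(\Om)$-obstacle problem; then $w_j$ is superharmonic with $w_j\ge\psi_j$ q.e., and the energy inequality $\int g_{w_j}^p\le\int g_{\psi_j}^p$ combined with the Poincar\'e inequality applied to $w_j-\psi_j\in\Np_0(\Om)$ forces $\|w_j\|_{\Np(\Om)}\to 0$. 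The $\tauone$-openness of $U_k$ together with the pointwise lower bound on $\psi_j$ should then yield the required boundary blow-up of $w_j$ on $E$.

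Finally I would pass to a subsequence with $w_j\to 0$ quasi-everywhere in $\Om$ (from $\Np$-convergence) to deduce $\uS_\Omone(f+h)\le\uS_\Omone f$ q.e.; since both sides are each either identically $\pm\infty$ or continuous (being \p-harmonic), and sets of zero capacity have empty interior in our setting, the inequality extends to hold everywhere. The main obstacle I anticipate is transferring the pointwise bound $\psi_j\ge k-j$ on $U_k\cap\Om$ to a genuine $\liminf$ statement for the superharmonic replacement $w_j$: obstacle-problem solutions are only determined up to q.e.\ equivalence, so one must rely on lower semicontinuity of $w_j$ together with the formula $w_j(y)=\essliminf_{z\to y}w_j(z)$ to upgrade the resulting a.e.\ bound $w_j\ge k-j$ on $U_k\cap\Om$ first to a pointwise bound on $U_k\cap\Om$, and then to the required boundary behavior at points of $E$ in the $\tauone$-topology.
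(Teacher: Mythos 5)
Your construction of the bump functions $\psi_j$ via Proposition~\ref{prop-outercap} and Lemma~\ref{lem-Newt}, and your reduction to the one-sided inequality by swapping $(f,h)\mapsto(f+h,-h)$, are sound and close to the paper's strategy. However, there is a fatal gap in the core step: you assert that $u+w_j$ is superharmonic whenever $u$ and $w_j$ are. This is true for $p=2$ but false for $p\ne 2$: the class of \p-superharmonic functions is not closed under addition, since the \p-Laplacian is not additive. Thus $u+w_j$ is in general not an admissible element of $\DU_{f+h}(\Omone)$, and the inequality $\uS_\Omone(f+h)\le u+w_j$ does not follow.

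The paper's repair is to avoid forming the sum $u+w_j$ altogether. Instead, let $\phi_j$ be the lsc-regularized solution of the $\K_{u+\psi_j,u+\psi_j}(\Om)$-obstacle problem (there is no need to first superharmonize $\psi_j$, as you do with $w_j$). The function $\phi_j$ is automatically superharmonic, being a superminimizer; it dominates $u+\psi_j$ quasieverywhere, hence dominates $u$ everywhere by the comparison principle for lsc-regularized obstacle solutions, and it blows up at every $x\in E$ via the bound $\phi_j\ge\psi_j^*+\inf_\Om u\ge m+\inf_\Om u$ on $U_{m+j}\cap\Om$. This gives $\phi_j\in\DU_{f+h}(\Omone)$, hence $\phi_j\ge\uS_\Omone(f+h)$. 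The remaining ingredient is the convergence $\phi_j\searrow u$ q.e.\ as $j\to\infty$, supplied by Proposition~\ref{prop-3.2} applied with $f_j=u+\psi_j$; your proposal to pass to a subsequence with $w_j\to 0$ q.e.\ must be replaced by this step, since after the fix the quantities to compare are $\phi_j$ and $u$, not $w_j$ and $0$, and mere $\Np$-smallness of the perturbation does not by itself control the obstacle solution pointwise. With these corrections your outline essentially becomes the paper's proof.
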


\begin{proof}
First assume that $h \ge 0$.
Let $E=\{x \in \bdyone \Om : h(x) \ne 0\}$.
By assumption, $\bCp(E;\Omone)=0$ and thus
Proposition~\ref{prop-outercap} shows that
for each $j$ there is
a $\tauone$-open set $G_j \subset \clOmone$ such that $E \subset G_j$
and $\bCp(G_j;\Omone) < 2^{-jp}$. 
Letting $U_k=\bigcup_{j=k+1}^\infty G_j$ we see that
$\bCp(U_k;\Omone) < 2^{-kp}$.
Let $\{\psi_j\}_{j=1}^\infty$ be the decreasing sequence of nonnegative functions 
given by Lemma~\ref{lem-Newt} with respect to $\{U_k\}_{k=1}^\infty$.

Let $u \in \DU_f(\Omone)$.
Since $u \in \Dp(\Om)$, $u$ is a superminimizer in $\Om$ and
it is the lsc-regularized solution of the $\K_{u,u}$-obstacle problem.
Let $v_j=u+\psi_j \in \Dp(\Om)$ and 
let $\phi_j$ be the lsc-regularized solution 
of the $\K_{v_j,v_j}$-obstacle problem. 
Then $\phi_j \ge u$  in $\Om$,
by 
the comparison principle in
Bj\"orn--Bj\"orn~\cite[Corollary~4.3]{BBnonopen}.
(As both functions are lsc-regularized the inequality holds everywhere.)
It follows that if $x \in \bdyone \Om \setm E$, then
\[
     \liminf_{\Om \ni y \toone x} \phi_j(y) 
     \ge \liminf_{\Om \ni y \toone x} u(y) \ge f(x) = (f+h)(x).
\]
On the other hand, if $x \in E$, then
for positive integers $m$, by Lemma~\ref{lem-Newt}, 
\[ 
    \phi_j \ge \psi_j^* +\inf_{\Om} u \ge m +\inf_{\Om} u \quad \text{on }U_{m+j}\cap\Om,
\] 
which implies that
\[
     \liminf_{\Om \ni y \toone x} \phi_j(y) = \infty  \ge (f+h)(x).
\]
As $\phi_j \in \Dp(\Om)$, we see that $\phi_j \in \DU_{f+h}(\Omone)$
and hence that $\phi_j \ge \uSpind{\Omone} (f+h)$.

By Proposition~\ref{prop-3.2}, the sequence $\{\phi_{j}\}_{j=1}^\infty$ 
decreases q.e.\ to $u$. 
It follows that $\uSpind{\Omone} (f+h) \le u$ q.e.\ in $\Om$,
and hence everywhere in $\Om$, since both functions are 
lsc-regularized.
As $u \in \DU_f(\Omone)$ was arbitrary, we conclude that
$\uSpind{\Omone}(f+h) \le \uSpind{\Omone} f$.
The converse inequality is trivial,
and thus $\uSpind{\Omone}(f+h) = \uSpind{\Omone} f$ if $h \ge 0$.

For a general $h$, we then get that
$\uSpind{\Omone}(f+h) = \uSpind{\Omone}(f+h_\limplus) = \uSpind{\Omone} f$.
The last part follows by applying this also to $-f$ and $-h$.
\end{proof}

\begin{cor}   \label{cor-Sf-only-qe}
In the definition of the Sobolev--Perron solutions, it is enough if 
condition~\eqref{eq-def-Perron} is satisfied only $\bCp(\,\cdot\,;\Omone)$-q.e.
\end{cor}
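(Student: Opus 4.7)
The plan is to show that admitting functions that satisfy~\eqref{eq-def-Perron} only $\bCp(\,\cdot\,;\Omone)$-q.e.\ does not lower the infimum defining $\uSpind{\Omone}f$; applying this to $-f$ then takes care of $\lSpind{\Omone}f$. Let $\widetilde{\DU}_f(\Omone)$ denote the enlarged admissible class. Since $\DU_f(\Omone) \subset \widetilde{\DU}_f(\Omone)$, the inequality $\inf_{u \in \widetilde{\DU}_f(\Omone)} u \le \uSpind{\Omone}f$ is automatic. So the task is, given $u \in \widetilde{\DU}_f(\Omone)$ with exceptional set $E \subset \bdyone \Om$ of zero $\bCp(\,\cdot\,;\Omone)$-capacity, to prove $u \ge \uSpind{\Omone}f$ in $\Om$.

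The strategy is to perturb $u$ upwards so as to recover membership in the strict class $\DU_f(\Omone)$, mirroring the mechanism of Proposition~\ref{prop-strres-invariance}. Using outerness of $\bCp(\,\cdot\,;\Omone)$ from Proposition~\ref{prop-outercap}, I first choose $\tauone$-open sets $G_j \supset E$ with $\bCp(G_j;\Omone) < 2^{-jp}$ and set $U_k = \bigcup_{j>k} G_j$. Lemma~\ref{lem-Newt} then supplies a decreasing sequence of nonnegative $\psi_j \in \Np(\Om)$ with $\|\psi_j\|_{\Np(\Om)} < 2^{-j}$ and $\psi_j \ge k-j$ on $U_k \cap \Om$. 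Since each $U_k$ is a $\tauone$-neighbourhood of every $x \in E$, this forces $\liminf_{\Om \ni y \toone x} \psi_j(y) = \infty$ at each $x \in E$.

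Set $v_j = u + \psi_j \in \Dp(\Om)$ and let $\phi_j$ be the lsc-regularized solution of the $\K_{v_j,v_j}$-obstacle problem. Then $\phi_j \in \Dp(\Om)$ is superharmonic, and $\phi_j \ge u$ in $\Om$ by comparison (both sides being lsc-regularized). The key verification is that $\phi_j \in \DU_f(\Omone)$: at $x \in \bdyone\Om \setm E$ the liminf condition for $\phi_j$ is inherited from $\phi_j \ge u$, while at $x \in E$ the $\psi_j$-blow-up through the neighbourhoods $U_k$ forces the liminf of $\phi_j$ to equal $+\infty$, trivially dominating $f(x) \in \eR$. Consequently $\phi_j \ge \uSpind{\Omone}f$ in $\Om$.

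Finally, $g_{v_j-u} = g_{\psi_j} \to 0$ in $\Lp(\Om)$, so Proposition~\ref{prop-3.2} yields that $\phi_j$ decreases q.e.\ to the lsc-regularized solution of the $\K_{u,u}$-obstacle problem, which is $u$ itself. Passing to the limit gives $u \ge \uSpind{\Omone}f$ q.e., and this upgrades to everywhere since $u$ is lsc-regularized and $\uSpind{\Omone}f$ is either $-\infty$ or \p-harmonic, hence continuous. The main obstacle is arranging the correct boundary behaviour of $\phi_j$ simultaneously at points of $E$ and of $\bdyone\Om \setm E$, but the construction of $\psi_j$ from Lemma~\ref{lem-Newt} is tailor-made for exactly this.
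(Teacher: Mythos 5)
Your proof is correct, and it relies on the same obstacle-perturbation machinery (Proposition~\ref{prop-outercap}, Lemma~\ref{lem-Newt}, the comparison principle from~\cite{BBnonopen}, and Proposition~\ref{prop-3.2}) that underlies Proposition~\ref{prop-strres-invariance}. The paper obtains the corollary as an immediate consequence of that proposition -- replace $f$ by a function $k$ that equals $f$ off the exceptional set $E$ and $-\infty$ on $E$, observe that $u\in\DU_k(\Omone)$ so $u\ge\uSpind{\Omone}k$, and then invoke $\uSpind{\Omone}k=\uSpind{\Omone}f$ -- whereas you re-run the perturbation argument inline, but the underlying approach is the same.
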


The proof of the following result is 
rather straightforward using \eqref{eq-f1<f2}, cf.\ 
Theorem~10.25 in \cite{BBbook}. 

\begin{prop} \label{prop-unif-Omm}
Let $f_j : \bdyone \Om \to \eR$, $j=1,2,\ldots$\,, be  
\textup{(}Sobolev\/\textup{)}-resolutive functions 
and assume that $f_j \to f$ uniformly on $\bdyone \Om$.
Then $f$ is 
\textup{(}Sobolev\/\textup{)}-resolutive 
and 
$\Hpind{\Omone} f_j \to \Hpind{\Omone} f$
\textup{(}resp.\ $\Spind{\Omone} f_j \to \Spind{\Omone} f$\textup{)} 
uniformly in\/ $\Om$.
\end{prop}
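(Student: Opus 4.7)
The plan is to squeeze the upper/lower (Sobolev--)Perron solutions of $f$ between those of $f_j \pm \eps$, using monotonicity \eqref{eq-f1<f2} together with translation invariance under additive constants. Fix $\eps>0$ and, by uniform convergence, choose $J$ so that $|f_j-f|\le\eps$ on $\bdyone\Om$ for all $j\ge J$. Then
\[
f_j-\eps \le f \le f_j+\eps \quad \text{on } \bdyone\Om,
\]
so that \eqref{eq-f1<f2} gives
\[
\uHpind{\Omone}(f_j-\eps)\le \uHpind{\Omone}f \le \uHpind{\Omone}(f_j+\eps),
\]
and analogously for $\lHpind{\Omone}$, $\uSpind{\Omone}$ and $\lSpind{\Omone}$.

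The second ingredient is the identity
\begin{equation}\label{eq-plan-shift}
\uHpind{\Omone}(g+c)=\uHpind{\Omone}g+c, \qquad
\uSpind{\Omone}(g+c)=\uSpind{\Omone}g+c,
\end{equation}
valid for any $g:\bdyone\Om\to\eR$ and any constant $c\in\R$, and similarly for the lower solutions. This is immediate from the definitions: $u\in\UU_g(\Omone)$ if and only if $u+c\in\UU_{g+c}(\Omone)$, since adding a constant preserves superharmonicity, boundedness from below, and shifts the liminf inequality in \eqref{eq-def-Perron} by $c$. For the Sobolev version one additionally observes that $u+c\in\Dp(\Om)$ if and only if $u\in\Dp(\Om)$, because $u$ and $u+c$ share the same minimal \p-weak upper gradient. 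Applying \eqref{eq-plan-shift} with $g=f_j$ and using the resolutivity of $f_j$ gives
\[
\Hpind{\Omone}f_j-\eps \;\le\; \lHpind{\Omone}f \;\le\; \uHpind{\Omone}f \;\le\; \Hpind{\Omone}f_j+\eps
\quad \text{in } \Om,
\]
and the same chain with $\Spind{\Omone}$, $\lSpind{\Omone}$, $\uSpind{\Omone}$ in the Sobolev case.

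From these bounds one reads off everything at once: $0\le \uHpind{\Omone}f-\lHpind{\Omone}f\le 2\eps$ for all $\eps>0$, hence $\uHpind{\Omone}f=\lHpind{\Omone}f$, and this common value is real-valued because it lies within $\eps$ of the real-valued function $\Hpind{\Omone}f_j$; thus $f$ is resolutive. The same inequality also shows $\sup_\Om|\Hpind{\Omone}f_j-\Hpind{\Omone}f|\le \eps$ for $j\ge J$, i.e.\ uniform convergence. Replacing $\Hp$ by $\Sp$ throughout proves the Sobolev-resolutive version.

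There is no real obstacle here; the only point that deserves explicit verification is the translation identity \eqref{eq-plan-shift} for the Sobolev variants, and that reduces to the trivial observation that membership in $\Dp(\Om)$ is invariant under additive constants, so that $u\in\DU_g(\Omone)$ iff $u+c\in\DU_{g+c}(\Omone)$.
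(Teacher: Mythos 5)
Your proof is correct, and it follows exactly the route the paper has in mind: the paper gives no argument at all but only points to the monotonicity \eqref{eq-f1<f2} and the analogous Theorem~10.25 in \cite{BBbook}, which is precisely the squeeze via $f_j\pm\eps$ together with translation invariance under constants that you spell out. Your observation that the translation identity carries over to the Sobolev case because membership in $\Dp(\Om)$ is unchanged by adding a constant is the only genuinely new point relative to the metric-boundary proof, and you handle it correctly.
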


We also obtain the following corollary.

\begin{cor} \label{cor-unif-pert} 
Let $f_j: \clOmone \to \eR$ be 
$\bCp(\,\cdot\,;\Omone)$-quasi\-con\-tin\-u\-ous functions
 such that $f_j|_{\Om} \in \Dp(\Om)$, $j=1,2,\ldots$\,.
Assume also that $f_j \to f$ uniformly on $\bdyone \Om$ as $j \to \infty$.
Let $h: \bdyone \Om \to \eR$ be a function which is zero 
$\bCp(\,\cdot\,;\Omone)$-q.e.\ on $\bdyone \Om$.
Then $f$ and $f+h$ are Sobolev-resolutive 
and
$\Spind{\Omone} f = \Spind{\Omone} (f+h)$.
\end{cor}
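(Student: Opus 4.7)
The plan is to chain together the three main results of this section and the resolutivity theorem from the previous section, so there is no real obstacle to overcome; the value of the statement lies in the combination.

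First, I would apply Theorem~\ref{thm-Newt-resolve-Omm} to each $f_j$. The hypotheses match exactly: $f_j$ is $\bCp(\,\cdot\,;\Omone)$-quasicontinuous on $\clOmone$ and $f_j|_{\Om}\in\Dp(\Om)$. The conclusion is that $f_j|_{\bdyone\Om}$ is Sobolev-resolutive with respect to $\Omone$. Next, since $f_j\to f$ uniformly on $\bdyone\Om$, Proposition~\ref{prop-unif-Omm} (in its Sobolev-resolutive version) immediately gives that the limit $f$ is Sobolev-resolutive and that $\Spind{\Omone}f_j\to\Spind{\Omone}f$ uniformly on $\Om$; we do not actually need the uniform convergence of the solutions for the present corollary, but it comes along for free.

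Finally, to incorporate the perturbation $h$, I would invoke Proposition~\ref{prop-strres-invariance} with the pair $(f,h)$. Since $h=0$ holds $\bCp(\,\cdot\,;\Omone)$-q.e.\ on $\bdyone\Om$ and $f$ is already known to be Sobolev-resolutive by the previous step, that proposition yields that $f+h$ is Sobolev-resolutive and
\[
   \Spind{\Omone}(f+h)=\Spind{\Omone}f,
\]
which is exactly the conclusion we want. Thus the entire proof reduces to the sequence \emph{Theorem~\ref{thm-Newt-resolve-Omm} $\Rightarrow$ Proposition~\ref{prop-unif-Omm} $\Rightarrow$ Proposition~\ref{prop-strres-invariance}}, with no additional estimates required.
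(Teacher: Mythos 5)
Your proof is correct and follows exactly the same route as the paper's own (one-line) proof, which chains Theorem~\ref{thm-Newt-resolve-Omm}, Proposition~\ref{prop-unif-Omm}, and Proposition~\ref{prop-strres-invariance}. The order of application and the role of each cited result match the paper precisely.
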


\begin{proof}
This follows directly from 
Theorem~\ref{thm-Newt-resolve-Omm} and 
Propositions~\ref{prop-strres-invariance} and~\ref{prop-unif-Omm}.
\end{proof}

\section{Harmonizability}
\label{sect-harm}

In the previous sections we developed the theory for the Dirichlet problem 
on general compactifications. 
In this section we will look at the question of constructing 
resolutive compactifications. 

\begin{deff}
A compactification of $\Omega$ is \emph{\textup{(}Sobolev\/\textup{)}-resolutive} 
if every continuous function on the boundary is (Sobolev)-resolutive.

A compactification is \emph{internally\/ \textup{(}Sobolev\/\textup{)}-resolutive} 
if every bounded \p-harmonic function on $\Omega$ is the (Sobolev)--Perron solution 
of some resolutive boundary function.
\end{deff}

We will focus on (Sobolev)-resolutivity in this article. 
Internal resolutivity  seems to be 
essentially untouched in the nonlinear potential theory (even on unweighted $\R^N$), 
but also seems to be quite difficult in this situation. 
In linear potential theory both these concepts are very well understood. 
For instance the 
Martin compactification 
(see Example~\ref{ex-Martin})
is always internally resolutive.
(For linear potential theory on $\R^N$ see e.g.\ 
Armitage--Gardiner~\cite{AGbook} or Doob~\cite{Doobbook}. 
See also Bishop~\cite{Bishop} and Mountford--Port~\cite{MP} where the relation 
between internal resolutivity and 
harmonic measure is discussed.)

We have the following fundamental result.

\begin{prop} \label{prop-resolutive-prec}
Assume that $\partial^1 \Omega \prec \partial^2 \Omega$.
Then the following are true\/\textup{:}
\begin{enumerate}
\item \label{k-res}
If $\partial^2 \Omega$ is \textup{(}Sobolev\/\textup{)}-resolutive,
then so is $\partial^1 \Omega$.
\item \label{k-in-res}
If $\partial^1 \Omega$ is internally\/ \textup{(}Sobolev\/\textup{)}-resolutive,
then so is $\partial^2 \Omega$.
\end{enumerate}
\end{prop}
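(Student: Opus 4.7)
The plan is to reduce both parts directly to Theorem~\ref{thm-res-prec}, which tells us how Perron (and Sobolev--Perron) solutions behave under the projection $\Phi:\clOmtwo\to\clOmone$ associated with the inequality $\partial^1\Om\prec\partial^2\Om$.

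For part~\ref{k-res}, I would take an arbitrary $f\in\Cc(\bdyone\Om)$ and look at the pullback $f\circ\Phi:\bdytwo\Om\to\R$. Since $\Phi$ is continuous on $\clOmtwo$, its restriction to $\bdytwo\Om$ is continuous, and hence so is $f\circ\Phi$. By the assumed (Sobolev)-resolutivity of $\partial^2\Om$, the function $f\circ\Phi$ is (Sobolev)-resolutive. Theorem~\ref{thm-res-prec} then says that $f$ itself is (Sobolev)-resolutive and that $P_{\Om^1}f = P_{\Om^2}(f\circ\Phi)$ (resp.\ $S_{\Om^1}f=S_{\Om^2}(f\circ\Phi)$). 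Since $f\in\Cc(\bdyone\Om)$ was arbitrary, $\partial^1\Om$ is (Sobolev)-resolutive.

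For part~\ref{k-in-res}, I would start with an arbitrary bounded \p-harmonic function $u$ on $\Om$. By the internal (Sobolev)-resolutivity of $\partial^1\Om$, there is a resolutive function $g:\bdyone\Om\to\eR$ with $u=P_{\Om^1}g$ (resp.\ $u=S_{\Om^1}g$). The pullback $g\circ\Phi:\bdytwo\Om\to\eR$ is again (Sobolev)-resolutive by Theorem~\ref{thm-res-prec}, and moreover $P_{\Om^2}(g\circ\Phi)=P_{\Om^1}g=u$ (resp.\ with $S$ in place of $P$). Thus $u$ is realized as the (Sobolev)--Perron solution of a (Sobolev)-resolutive function on $\bdytwo\Om$, so $\partial^2\Om$ is internally (Sobolev)-resolutive.

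The only subtlety is purely mechanical: one must check that continuity of $g\circ\Phi$ and the identity $P_{\Om^1}g=P_{\Om^2}(g\circ\Phi)$ are valid also when $g$ is merely extended real-valued, but both are already contained in the statement of Theorem~\ref{thm-res-prec}. There is no real obstacle here; the whole argument is a clean functorial application of Theorem~\ref{thm-res-prec}, and the two parts are dual to each other in that part~\ref{k-res} uses the implication ``$f\circ\Phi$ resolutive $\Rightarrow$ $f$ resolutive'' while part~\ref{k-in-res} uses the reverse implication ``$g$ resolutive $\Rightarrow$ $g\circ\Phi$ resolutive''.
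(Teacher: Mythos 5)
Your proof is correct and takes essentially the same approach as the paper: both parts are reduced to Theorem~\ref{thm-res-prec} via the projection $\Phi:\clOmtwo\to\clOmone$, with part~\ref{k-res} pulling a continuous $f$ back to $f\circ\Phi$ and part~\ref{k-in-res} pushing a representing boundary function from $\bdyone\Om$ up to $\bdytwo\Om$.
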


\begin{proof}
Let $\Phi : \clOmtwo \to \clOmone$
denote the projection.

\ref{k-res}
Let $f \in C(\bdy^1 \Omega)$. Then $f \circ \Phi \in C(\bdy^2 \Omega)$
and it 
is resolutive by assumption.
By Theorem~\ref{thm-res-prec}, $P_{\Om^1} f=P_{\Om^2}(f \circ \Phi)$,
and hence $\bdy^1 \Om$  is resolutive.

\ref{k-in-res}
Let $u$ be a bounded \p-harmonic function on $\Om$.
By assumption there is a resolutive $f : \partial^1 \Omega \rightarrow \R$ 
such that $u=P_{\Om^1}f$.
By Theorem~\ref{thm-res-prec} again, $u=P_{\Om^2}(f \circ \Phi)$,
and thus $\partial^2 \Omega$ is internally resolutive.

The Sobolev cases are shown similarly.
\end{proof}

It follows from Proposition~\ref{prop-pharm} and
Theorem~\ref{thm-cont-harm-res} below that
the Stone--\v{C}ech compactification
(see Example~\ref{ex-compactification})
is internally resolutive.

To see which $Q$-compactifications  are resolutive we introduce 
the fundamental concept of harmonizability due to  
Constantinescu--Cornea~\cite{CC} 
who studied it in the case of linear potential theory on Riemann surfaces 
(see also their article \cite[\S 2]{CC2}). 
Wiener~\cite{Wien} used a similar construction when he constructed
solutions of the Dirichlet problem for harmonic functions with 
continuous boundary data 
on nonregular domains in $\R^n$ using approximations by regular domains.
In the nonlinear theory, Wiener solutions and harmonizability 
have (as far as we know) only been studied by 
Maeda--Ono~\cite{MaedaOnoJMSJ2000}, \cite{MaedaOnoHiro2000} 
who studied them on weighted $\R^n$
for more general
equations with right-hand sides than is under consideration here.
Sobolev--Wiener solutions and Sobolev-harmonizability have not been 
considered earlier.

\begin{deff} \label{def-Weiner-soln}
For an arbitrary function $f:\Om\to\eR$ we let $\UU^W_f$
be the set of superharmonic functions $u$ in $\Om$ which are bounded from below
and such that there is a compact set $K \Subset \Om$ with $u \ge f$ in
$\Om \setm K$.

Then the \emph{upper} and \emph{lower Wiener solutions} are defined by
\[
   \uW f(x) = \inf_{u \in  \UU^W_f} u(x), \ x \in \Om,
\quad \text{and} \quad 
  \lW f = -  \uW(-f).
\]
If $\uW f = \lW f$ and it is real-valued, 
then we denote the common value by $W f$ and say that $f$ is \emph{harmonizable}.

Similarly we let
$\DU^W_f = \UU^W_f \cap \Dp(\Omega)$
and define the 
\emph{Sobolev--Wiener solutions} by
\begin{equation} \label{eq-def-uK} 
   \uZ f(x) = \inf_{u \in  \DU^W_f}u(x), \ x \in \Om,
\quad \text{and} \quad 
\lZ f = -\uZ (-f).
\end{equation}
If $\uZ f = \lZ f$ and it is real-valued, 
then we denote the common value by $Z f$ and say that $f$ is 
\emph{Sobolev-harmonizable}.
\end{deff}

As we will only consider these solutions on the set $\Om$, we have omitted
$\Om$ from the notation.
It can be shown in the same way as for the (Sobolev)--Perron solutions
that 
$\uW f$, $\lW f$, $\uZ f$ and $\lZ f$ 
are \p-harmonic in $\Om$, unless they are identically $\pm \infty$.

Note that, as $\Om$ is bounded, the Sobolev--Wiener
solutions for \emph{bounded} $f$ can equivalently be defined by
taking the infimum in \eqref{eq-def-uK} over
$u \in \UU^W_f \cap \Np(\Om)$.

The following inequalities are fundamental.

\begin{prop} \label{prop-lK<=uK}
Let $f: \Om \to \eR$ be arbitrary.
Then 
$\lZ f \leq \lW f \leq \uW f \leq \uZ f$.
\end{prop}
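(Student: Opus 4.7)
The first and third inequalities are immediate from the set inclusion $\DU^W_f\subset\UU^W_f$: taking infima over the smaller class gives $\uW f\le\uZ f$, and applying the same inclusion with $-f$ in place of $f$ and then negating yields $\lZ f\le\lW f$. The substance of the proposition therefore lies in the middle inequality $\lW f\le\uW f$, which I would prove by a comparison argument.

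The plan is to fix $u\in\UU^W_f$ and $v\in\UU^W_{-f}$ and to show that $u\ge -v$ pointwise on $\Om$. By definition there exist compact sets $K_1,K_2\Subset\Om$ with $u\ge f$ on $\Om\setm K_1$ and $v\ge -f$ on $\Om\setm K_2$. Setting $K=K_1\cup K_2$ then gives the pointwise inequality $-v\le f\le u$ on $\Om\setm K$, and this remains correct even at points where $f$ takes the values $\pm\infty$.

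To promote this inequality from $\Om\setm K$ to all of $\Om$, I would pick any open $G$ with $K\Subset G\Subset\Om$ and apply the comparison principle (Proposition~\ref{prop-comparison}) to the superharmonic function $u$ and the subharmonic function $-v$ on $G$ equipped with its metric compactification $\itoverline{G}$. At each $x\in\bdy G\subset\Om\setm K$, the lower semicontinuity of $u$ and upper semicontinuity of $-v$ together with the pointwise inequality at $x$ produce
\[
  \limsup_{G\ni y\to x}(-v)(y)\le -v(x)\le u(x)\le\liminf_{G\ni y\to x}u(y),
\]
and the forbidden values $+\infty$ on the left and $-\infty$ on the right are ruled out because $-v$ is bounded above and $u$ is bounded below on all of $\Om$ by the very definition of $\UU^W$. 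The comparison principle then yields $-v\le u$ on $G$, which together with the earlier pointwise inequality outside $K\subset G$ gives $u\ge -v$ throughout $\Om$.

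Taking the infimum over $u\in\UU^W_f$ gives $\uW f(x)\ge -v(x)$ for every $v\in\UU^W_{-f}$, and a second infimum produces $\uW f\ge -\uW(-f)=\lW f$, completing the chain. The one step I expect to need genuine care is the verification of the comparison-principle hypothesis at those boundary points of $G$ where $u$ or $-v$ happens to be infinite, but as noted the global one-sided boundedness built into the definition of $\UU^W$ renders these degenerate cases automatic.
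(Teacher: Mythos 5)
Your proof is correct and follows essentially the same route as the paper's: the outer inequalities come from the inclusion $\DU^W_f\subset\UU^W_f$, and the middle inequality $\lW f\le\uW f$ is obtained by applying the comparison principle (Proposition~\ref{prop-comparison}) on an intermediate open set $G$ with $K\Subset G\Subset\Om$ and then taking infima/suprema. You have in fact spelled out the verification of the comparison principle's boundary hypotheses (semicontinuity of $u$ and $-v$ plus the one-sided boundedness built into the Wiener classes ruling out the forbidden values $\pm\infty$) more explicitly than the paper does, but the argument is the same.
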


It will be convenient to define 
$\LL_{f}:=\{u : -u \in \UU_{-f}\}$
and
$\LL^W_{f}:=\{u : -u \in \UU^W_{-f}\}$.

\begin{proof}
Let $u \in \UU^W_f$ and $v \in \LL^W_{f}$.
Then 
$v \le f \le u$ in $\Om \setm K$ for some $K \Subset \Om$.
Let $G$ be open and such that $K \subset G \Subset \Om$.
Then by Proposition~\ref{prop-comparison} 
applied to $\itoverline{G}$, we
see that $v \le u$ in $G$, and hence in all of $\Om$.
Taking infimum over all $u \in \UU^W_f$ and supremum
 over all $v \in \LL^W_{f}$
yields $\lW f \leq \uW f$.
The remaining inequalities are trivial.
\end{proof}

\begin{prop} \label{prop-pharm}
If $f: \Om \to \R$ is a bounded superharmonic function, 
then $f$ is harmonizable 
and 
$Wf \leq f$. 

In particular if $f: \Om \to \R$ is a bounded \p-harmonic function 
then $W f = f$,
and if $f \in \Dp(\Om)$ is a bounded \p-harmonic function 
then $Zf=W f = f$.
\end{prop}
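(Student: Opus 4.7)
The plan rests on a short but crucial observation: for bounded superharmonic $f$, the function $\uW f$ is itself \p-harmonic, hence subharmonic, and therefore qualifies as an admissible competitor in the definition of $\lW f$. The proof then just consists of verifying the hypotheses needed to make this observation work.

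First I would note that $f\in\UU^W_f$ by taking $K=\emptyset$ in Definition~\ref{def-Weiner-soln}: $f$ is superharmonic, bounded below (since bounded), and trivially $f\ge f$ on $\Om$. This immediately yields $\uW f\le f\le\sup_\Om f$ in $\Om$. Next I would verify that $\uW f\ge\inf_\Om f>-\infty$. For any $u\in\UU^W_f$ with associated compact $K\Subset\Om$, and any open $D$ with $K\subset D\Subset\Om$, on $\partial D\subset\Om\setm K$ we have $u\ge f\ge\inf_\Om f$; applying condition~\ref{cond-c} of Definition~\ref{deff-superharm} to $u$ with the constant Lipschitz function $v\equiv\inf_\Om f$ (for which $\oHpind{D}v=v$) forces $u\ge\inf_\Om f$ on $D$. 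Letting $D$ exhaust $\Om$ gives $u\ge\inf_\Om f$ throughout $\Om$, so $\uW f\ge\inf_\Om f$. Since $\uW f$ is now real-valued, the remark following Definition~\ref{def-Weiner-soln} gives that it is \p-harmonic in $\Om$.

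The conclusion then follows instantly: the \p-harmonic function $\uW f$ is subharmonic and bounded above, and $\uW f\le f$ everywhere in $\Om$, so $\uW f\in\LL^W_f$ (again with $K=\emptyset$). Hence $\lW f\ge\uW f$, and Proposition~\ref{prop-lK<=uK} gives the reverse inequality, so $\lW f=\uW f=:Wf$; $f$ is harmonizable with $Wf\le f$.

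For the \emph{in particular} part I would proceed directly rather than through the general case: if $f$ is bounded \p-harmonic, it is both super- and subharmonic, so $f\in\UU^W_f\cap\LL^W_f$, which together with Proposition~\ref{prop-lK<=uK} pinches the chain $f\le\lW f\le\uW f\le f$ and yields $Wf=f$. If additionally $f\in\Dp(\Om)$, then $f$ and $-f$ lie in $\DU^W_f$ and $\DU^W_{-f}$ respectively, so the identical reasoning applied to $\uZ,\lZ$ (using Proposition~\ref{prop-lK<=uK} again) gives $Zf=f$. I do not foresee a genuine obstacle; the only delicate step is the uniform lower bound on arbitrary competitors $u$, but this works precisely because they all share the common reference value $\inf_\Om f$ on $\partial D$.
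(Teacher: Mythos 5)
Your proof is correct and follows essentially the same route as the paper's: show $\uW f\le f$ directly from the definition, observe that $\uW f$ is therefore a bounded $p$-harmonic function lying in $\LL^W_f$, deduce $\lW f\ge\uW f$, and combine with Proposition~\ref{prop-lK<=uK}. You add an explicit verification that $\uW f$ is bounded below (the paper leaves this implicit) and you pinch $f\in\UU^W_f\cap\LL^W_f$ directly for the $p$-harmonic case instead of applying the first part to $-f$, but these are cosmetic variants of the same argument.
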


See Theorem~\ref{thm-Np-imp-harm} below for a substantial
generalization of the last part.

\begin{proof}
Since $f$ is bounded, we directly have $\uW f \leq f$ by definition. 
But this also implies that
\[
   \lW f \ge \uW f,
\]
since  $\uW f$ is a bounded \p-harmonic function which 
belongs to $\LL^W_{f}$. Together 
with Proposition~\ref{prop-lK<=uK} this
shows that $f$ is harmonizable and
$W f \le f$.

If $f$ is a bounded \p-harmonic function, then so is $-f$,
and hence $f \le - W(-f) = W f  \le f$, so $Wf =f$.
The last part
follows similarly.
\end{proof}

Proposition~\ref{prop-lK<=uK} shows
that 
 being Sobolev-harmonizable is a stronger concept than being harmonizable.
That it is strictly stronger follows from the following example.

\begin{example} \label{ex-harm-not-sob}
Let $\Om=(0,1)^n$ and $E\subset\bdry\Om$ be as in 
Example~\ref{ex-resol-not-Sob}. 
For $p=2$, it is shown therein that $\Hpind{\Om} \chi_E\equiv0$ and hence there
exists a bounded superharmonic function $v\in\UU_{\chi_E}(\Om)$ such that $v(0)<1$.
By Proposition~\ref{prop-pharm}, $v$ is harmonizable
and 
$W v\le v$.

At the same time, if $u\in\DU^W_v$ then
\[
\liminf_{\Om\ni y\to x}u(y) \ge \liminf_{\Om\ni y\to x} v(y) \ge \chi_E(x)
\]
for all $x\in\bdry\Om$ and since $E$ is nonthin at every $x\in\bdry\Om$,
it follows as in Example~\ref{ex-resol-not-Sob} that $u\ge1$ in $\Om$.
Consequently, $\uZ v\equiv1\ne W v$, since $v(0)<1$.

A similar example can be based on Example~\ref{ex-Sobolev-resolutivity},
this time for a ball $\Om \subset \R^n$ with $p > n \ge 2$.
\end{example}

The following example shows that the boundedness assumption
cannot be dropped from Proposition~\ref{prop-pharm}.

\begin{example} \label{ex-punctured-ball}
Let  $\Om=B(0,1) \setm \{0\} \subset \R^n$
be the punctured ball
with $1 <p \le n$ and let $f(x)=|x|^{(p-n)/(p-1)}-1$ 
(or $f(x)=-\log|x|$ if $p=n$)
be the Green function in $B(0,1)$ with
pole at $0$. 
Then $\uW f \le f$ and using Proposition~\ref{prop-comparison} 
as in the proof of Proposition~\ref{prop-lK<=uK}, we see that $\uW f = f$.
(The above argument shows that $\uW f=f$ for any \p-harmonic $f$
bounded from below.)
On the other hand if $u \in \LL^W_f$, then also $u_\limplus \in \LL^W_f$,
and $u_\limplus$ being bounded has an extension as a subharmonic
function to $B(0,1)$,
by Theorem~7.35 in Heinonen--Kilpel\"ainen--Martio~\cite{HeKiMa}
(or \cite[Theorem~12.3]{BBbook}).
Comparing with a constant function in $B(0,1-\de)$, $\de>0$,
using Proposition~\ref{prop-lK<=uK} and letting $\de \to 0$,
shows that $u \le 0$.
Hence $\lW f =0$.

Furthermore, an easy calculation shows that $f \notin \Nploc(B(0,1))$.
If there were $u \in \Np(\Om)$ such that $u \ge f$ in $\Om$, 
then $u$
would have an extension to $\Np(B(0,1))$,
by Theorem~2.44 in 
\cite{HeKiMa},
and it would follow from Corollary~9.6 in \cite{BBbook} that
$f \in \Nploc(B(0,1))$, a contradiction.
Hence no such $u$ exists, and  $\uZ f = \infty$,
thus providing another example when $\uZ f \ne \uW f$.

We also have $\Hpind{\Om} f = \Spind{\Om} f = 0$,
by e.g.\ Proposition~\ref{prop-strres-invariance}.
\end{example}

\begin{prop}    \label{prop-usc-Wf-le-Pf}
If $f:\clOmone\to\eR$ is upper 
semicontinuous,
then $\lW f\le \lP_\Omone f$ and $\lZ f\le \lS_\Omone f$.
If moreover $f<\infty$ on $\bdyone \Om$, then also
$\uW f \le \uP_\Omone f$ and $\uZ f \le \uS_\Omone f$.
\end{prop}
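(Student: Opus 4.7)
\medskip

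The plan is to show each inequality by an inclusion of the relevant classes of admissible test functions (possibly after an $\eps$-shift), so that the infimum defining the Wiener solution becomes bounded by the infimum defining the Perron solution.

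I would start with the easier inequality $\lW f \le \lP_\Omone f$, which (by the definitions $\lW f = -\uW(-f)$ and $\lP_\Omone f = -\uP_\Omone(-f)$) is equivalent to $\uW g \ge \uP_\Omone g$ for $g := -f$. Since $f$ is upper semicontinuous, $g$ is lower semicontinuous. Pick any $u \in \UU^W_g$ and let $K \Subset \Om$ be the associated compact set with $u \ge g$ on $\Om \setminus K$. For each $x \in \bdyone \Om$, $K$ is $\tauone$-closed in $\clOmone$ and $x \notin K$, so some $\tauone$-neighbourhood $V$ of $x$ is disjoint from $K$. Consequently $u \ge g$ on $V \cap \Om$, so
\[
\liminf_{\Om \ni y \toone x} u(y) \ge \liminf_{\Om \ni y \toone x} g(y) \ge g(x)
\]
by lower semicontinuity of $g$; no finiteness assumption is needed here. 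Hence $\UU^W_g \subset \UU_g(\Omone)$, which gives $\uW g \ge \uP_\Omone g$. The argument for $\lZ f \le \lS_\Omone f$ is identical, using $\DU^W_g \subset \DU_g(\Omone)$.

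The inequality $\uW f \le \uP_\Omone f$ requires the additional hypothesis $f<\infty$ on $\bdyone\Om$ and is the more delicate step. Fix $u \in \UU_f(\Omone)$ and $\eps > 0$; I will show $u+\eps \in \UU^W_f$. For each $x \in \bdyone \Om$ with $f(x)$ finite, upper semicontinuity of $f$ (valid since $f(x)<\infty$) gives a $\tauone$-neighbourhood $W'_x$ of $x$ with $f < f(x) + \eps/2$ there, while the $\liminf$ condition on $u$ yields a $\tauone$-neighbourhood $W_x$ of $x$ with $u > f(x) - \eps/2$ on $W_x \cap \Om$. On $N_x := W_x \cap W'_x$, then $u + \eps > f$ on $N_x \cap \Om$. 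At points where $f(x) = -\infty$, upper semicontinuity together with the lower bound on $u$ gives the inequality $u+\eps > f$ directly on some $\tauone$-neighbourhood $N_x$. Since $\bdyone \Om$ is $\tauone$-closed in the compact space $\clOmone$, it is compact, so finitely many $N_{x_1}, \ldots, N_{x_m}$ cover $\bdyone\Om$. Set
\[
K = \clOmone \setminus \bigcup_{i=1}^m N_{x_i}.
\]
Then $K$ is $\tauone$-compact and contained in $\Om$, so $K \Subset \Om$, and $u+\eps > f$ on $\Om \setminus K$. Since $u+\eps$ is superharmonic and bounded from below, $u+\eps \in \UU^W_f$, so $u + \eps \ge \uW f$. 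Taking the infimum over $u$ and letting $\eps\to 0$ yields $\uP_\Omone f \ge \uW f$.

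For the Sobolev version $\uZ f \le \uS_\Omone f$, the same proof works verbatim: if $u \in \DU_f(\Omone)$, then $u+\eps \in \Dp(\Om)$ (its minimal \p-weak upper gradient is unchanged), so $u+\eps \in \DU^W_f$ and the same chain of inequalities applies. The main obstacle throughout is the compactness-of-$\bdyone\Om$ step together with the interplay between the $\liminf$ and the upper semicontinuity, which is precisely where the finiteness of $f$ on the boundary is used; the rest is routine bookkeeping with the definitions.
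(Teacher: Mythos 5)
Your proof is correct and follows essentially the same approach as the paper's: show that (up to an $\eps$-shift) the class of admissible functions for the Wiener solution is contained in that for the Perron solution, then pass to infima. The only cosmetic differences are that you phrase the first pair of inequalities dually (working with $\UU^W_{-f}\subset\UU_{-f}(\Omone)$ via lower semicontinuity of $-f$, rather than directly with $\LL^W_f\subset\LL_f$), you spell out the finite subcover of $\bdyone\Om$ instead of observing that $\Om\setm\bigcup_x V_x$ is a closed subset of the compact space $\clOmone$, and you separate out the case $f(x)=-\infty$, which the paper treats implicitly.
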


Note that the inequalities may be strict, e.g.\ for  $f=\chi_{\bdyone \Om}$
we have $W f= Zf \equiv 0$ and $P_\Omone f = S_\Omone f \equiv 1$.
Example~\ref{ex-punctured-ball} shows that the condition
$f<\infty$ cannot be dropped from the last part.
This result
will be partially extended to $\bCp(\,\cdot\,;\Omone)$-quasisemicontinuous 
functions in Propositions~\ref{prop-q-lsc-uK-ge-uS} 
and~\ref{prop-q-usc-uK-le-uS}.

\begin{proof}
To prove the first inequality, let $v\in\LL^W_f$ be arbitrary. 
Then $v\le f$ in $\Om\setm K$ for some $K\Subset\Om$ and hence,
by the upper semicontinuity of $f$,
\[
\limsup_{\Om \ni y \toone x} v(y) \le \limsup_{\Om \ni y \toone x} f(y) \le f(x)
\quad \text{for all } x\in\bdyone\Om.
\]
It follows that 
$v \in \LL_f$,
and hence $v\le \lP_\Omone f$.
Taking supremum over all $v\in\LL^W_f$ shows that $\lW f\le \lP_\Omone f$.
The second inequality is proved similarly.

For the third inequality, let $u\in\UU_f$ and $\eps>0$ be arbitrary.
As $f<\infty$ on $\bdyone \Om$ and 
\[
\liminf_{\Om \ni y \toone x} u(y) \ge f(x) \ge \limsup_{\Om \ni y \toone x} f(y)
\]
for every $x\in\bdyone\Om$, there are $\tau^1$-neighbourhoods $V_x\ni x$ such
that $u+\eps>f$ in $V_x$. 
Thus, $u+\eps>f$ in $\Om\setm K$, where $K:= \Om\setm\bigcup_{x\in\bdyone\Om}V_x$
is compact, i.e.\  $u+\eps\in \UU^W_f$ and consequently $u+\eps\ge \uW f$.
Taking infimum over all $u\in\UU_f$ and letting $\eps\to0$ proves the
third inequality $\uW f \le \uP_\Omone f$.
The fourth inequality is proved similarly.
\end{proof}

The following theorem is now a direct consequence of 
Proposition~\ref{prop-usc-Wf-le-Pf}.
It will be partially generalized to quasicontinuous functions in
Theorem~\ref{thm-qcont-harm-res}.

\begin{thm}           \label{thm-cont-harm-res}
Let $f\in \Cc(\clOmone)$.
Then
$\lW f = \lP_\Omone f$, $\uW f = \uP_\Omone f$,
$\lZ f = \lS_\Omone f$ and $\uZ f = \uS_\Omone f$.

Moreover,
$f|_\Om$ is\/ \textup{(}Sobolev\/\textup{)}-harmonizable if and only if 
$f|_{\bdyone\Om}$ is\/ \textup{(}Sobolev\/\textup{)}-res\-o\-lut\-ive,
and when this happens we have
$W f = P_\Omone f$ \textup{(}resp.\ $Zf = S_\Omone f$\textup{)}.
\end{thm}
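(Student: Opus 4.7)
The plan is to derive all four equalities in one stroke by applying Proposition~\ref{prop-usc-Wf-le-Pf} to both $f$ and $-f$, and then to read off the moreover statement as a formal consequence. Since $f\in\Cc(\clOmone)$ and $\clOmone$ is compact Hausdorff, $f$ is real-valued and bounded on $\clOmone$; in particular $f<\infty$ and $-f<\infty$ on $\bdyone\Om$, so the hypothesis about finiteness in the second half of Proposition~\ref{prop-usc-Wf-le-Pf} is automatic.

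First I would note that $f$ is upper semicontinuous on $\clOmone$ (being continuous), so Proposition~\ref{prop-usc-Wf-le-Pf} yields
\[
   \lW f\le\lP_\Omone f,\qquad \uW f\le\uP_\Omone f,\qquad
   \lZ f\le\lS_\Omone f,\qquad \uZ f\le\uS_\Omone f.
\]
Next I would apply the same proposition to $-f\in\Cc(\clOmone)$, which is also upper semicontinuous and satisfies $-f<\infty$ on $\bdyone\Om$. Using the defining identities
\[
   \lW f=-\uW(-f),\qquad \lP_\Omone f=-\uP_\Omone(-f),
\]
and the analogous identities for the upper Wiener/Perron solutions and for the Sobolev versions, the four inequalities obtained by plugging $-f$ into Proposition~\ref{prop-usc-Wf-le-Pf} turn into the reverse inequalities
\[
   \lW f\ge\lP_\Omone f,\qquad \uW f\ge\uP_\Omone f,\qquad
   \lZ f\ge\lS_\Omone f,\qquad \uZ f\ge\uS_\Omone f.
\]
Combining the two sets of inequalities gives the four equalities claimed in the theorem.

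For the moreover part, harmonizability of $f|_\Om$ means $\uW f=\lW f$ with real-valued common value, while resolutivity of $f|_{\bdyone\Om}$ means $\uP_\Omone f=\lP_\Omone f$ with real-valued common value. The first two equalities above identify these two pairs of solutions, so the equivalence and the identity $W f=P_\Omone f$ follow at once; real-valuedness is automatic because $\inf f\le \lW f\le\uW f\le\sup f$, both finite. The Sobolev version is identical, using the other two equalities and the identity $Zf=S_\Omone f$. There is no real obstacle here beyond invoking Proposition~\ref{prop-usc-Wf-le-Pf} twice; the only thing to watch is that continuity on the compact space $\clOmone$ is what simultaneously supplies upper semicontinuity for $f$ and $-f$ and ensures the finiteness condition needed for the upper inequalities.
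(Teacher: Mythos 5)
Your proof is correct and takes essentially the same route as the paper, which states the theorem is a direct consequence of Proposition~\ref{prop-usc-Wf-le-Pf}; applying that proposition to both $f$ and $-f$ (continuity supplying upper semicontinuity and boundedness in both cases) and unwinding the sign conventions $\lW f=-\uW(-f)$, $\lP_\Omone f=-\uP_\Omone(-f)$ is exactly the intended argument, and the moreover part is the formal consequence you describe.
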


For Wiener solutions and harmonizability, the corresponding
result on $\R^n$ was obtained by 
Maeda--Ono~\cite[Proposition~2.2]{MaedaOnoHiro2000}.

\begin{cor}\label{cor-resolutive}
$\clOmone$  is\/ 
\textup{(}Sobolev\/\textup{)}-resolutive if and only if every 
$f \in \Cc(\clOmone)$ is\/
\textup{(}Sobolev\/\textup{)}-harmonizable.
\end{cor}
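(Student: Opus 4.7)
The plan is to deduce Corollary~\ref{cor-resolutive} directly from Theorem~\ref{thm-cont-harm-res}, which provides the crucial bridge between harmonizability of $f|_\Om$ and resolutivity of $f|_{\bdyone\Om}$ for continuous $f\in\Cc(\clOmone)$. The only subtle point is that the two conditions in the corollary are phrased asymmetrically: \textup{(}Sobolev\textup{)}-resolutivity of $\clOmone$ is a condition on all continuous functions on $\bdyone\Om$, while harmonizability is imposed on restrictions to $\Om$ of continuous functions on the whole compactification. The two directions therefore require slightly different handling.

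For the forward direction, I would take an arbitrary $f\in\Cc(\clOmone)$. Its restriction $f|_{\bdyone\Om}$ is continuous on $\bdyone\Om$, and by the assumed \textup{(}Sobolev\textup{)}-resolutivity of $\clOmone$, $f|_{\bdyone\Om}$ is \textup{(}Sobolev\textup{)}-resolutive. Theorem~\ref{thm-cont-harm-res} then immediately yields that $f|_\Om$ is \textup{(}Sobolev\textup{)}-harmonizable, as required.

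For the converse, given any continuous $g:\bdyone\Om\to\R$, I need to exhibit some $f\in\Cc(\clOmone)$ whose boundary restriction equals $g$. This is where Tietze's extension theorem enters: since $\clOmone$ is a compact Hausdorff space, it is normal (see the argument already used in Lemma~\ref{lem-CC-clOm}), and $\bdyone\Om$ is closed in $\clOmone$, so $g$ extends to some $f\in\Cc(\clOmone)$. By assumption, $f|_\Om$ is \textup{(}Sobolev\textup{)}-harmonizable, and Theorem~\ref{thm-cont-harm-res} then gives that $f|_{\bdyone\Om}=g$ is \textup{(}Sobolev\textup{)}-resolutive. Since $g$ was arbitrary, $\clOmone$ is \textup{(}Sobolev\textup{)}-resolutive.

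There is no real obstacle: the substantive work lies entirely in Theorem~\ref{thm-cont-harm-res}, while this corollary is a clean two-line packaging of that theorem together with Tietze's extension theorem. The only thing worth being explicit about is that the Sobolev and non-Sobolev cases can be handled in parallel, since Theorem~\ref{thm-cont-harm-res} provides both equivalences simultaneously.
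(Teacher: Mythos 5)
Your proof is correct and follows the same route the paper intends: the corollary is stated immediately after Theorem~\ref{thm-cont-harm-res} with no separate argument, so it is understood as following directly from that theorem, with the converse direction requiring exactly the Tietze extension step you supply (noting $\bdyone\Om$ is closed in the compact Hausdorff, hence normal, space $\clOmone$). Your observation that the only nontrivial point is the asymmetry between functions on $\bdyone\Om$ and restrictions to $\Om$ of functions on $\clOmone$ is accurate, and both the Sobolev and non-Sobolev cases go through verbatim.
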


\begin{example}
Let $\Om=B(0,1)$ 
be the unit ball in $\R^n$, $n \ge 1$, and let $f(x)=\sin(1/(1-|x|))$.
Then $f\in\Cc(\Om)$.
By the minimum principle for superharmonic functions, every $u\in\UU^W_f$
must satisfy $u\ge1$ in $\Om$ (since this holds on spheres accumulating
towards $\bdry\Om$).
It follows that $\uW f\equiv1$.

Similarly, $\lW f\equiv-1$, so $f$ is a bounded continuous function which is
not harmonizable in the unit ball.
Let $Q=\{f\}$.
Then $f$ has a continuous extension $\ft \in \Cc(\clOmQ)$,
showing (because of Proposition~\ref{prop-metrizable})
that $\clOmQ$ is a metrizable nonresolutive compactification.

Observe that $f\notin\Dp(\Om)$, cf.\ Theorem~\ref{thm-Np-imp-harm}.
\end{example}

Note that in the proof of Proposition~\ref{prop-usc-Wf-le-Pf} we 
only
used that $f$ is upper semicontinuous at every $x\in\bdyone\Om$, not in $\Om$.
A similar observation concerning continuity thus applies to 
Theorem~\ref{thm-cont-harm-res} as well. 
This is even true more generally,
as seen by the following result.

\begin{prop}
Let $f_1,f_2: \Om \to \eR$ be such that
\[
   \lim_{\Om \ni y \toone x} (f_1(y)-f_2(y))=0
\quad \text{for all } x \in \bdyone \Om.
\]
Then $\uW f_1 = \uW f_2$ and $\uZ f_1 = \uZ f_2$.
\end{prop}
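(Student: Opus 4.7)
The plan is to exploit the local-near-the-boundary nature of Wiener solutions: membership in $\UU^W_f$ only requires the inequality $u\ge f$ outside some compact $K\Subset\Om$, so pointwise equality of $f_1$ and $f_2$ in the limit at $\bdyone\Om$ should force the infima to coincide. The key auxiliary fact is that $\bdyone\Om$ is compact (being a closed subset of the compact space $\clOmone$), which will let us pass from a pointwise boundary condition to a uniform estimate outside some compact subset of $\Om$.

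Fix $\eps>0$. For each $x\in\bdyone\Om$, the hypothesis supplies a $\tauone$-neighbourhood $V_x\subset\clOmone$ of $x$ such that $|f_1(y)-f_2(y)|<\eps$ for every $y\in V_x\cap\Om$. The family $\{V_x\}_{x\in\bdyone\Om}$ covers $\bdyone\Om$, and since $\bdyone\Om$ is compact I extract a finite subcover $V_{x_1},\dots,V_{x_N}$. Set
\[
K_\eps := \Om\setm\bigcup_{i=1}^N V_{x_i},
\]
which is closed in $\clOmone$ and disjoint from $\bdyone\Om$, hence $K_\eps\Subset\Om$. On $\Om\setm K_\eps$ one has $|f_1-f_2|<\eps$.

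Now take any $u\in\UU^W_{f_2}$. By definition there is $K'\Subset\Om$ with $u\ge f_2$ on $\Om\setm K'$. Then $u+\eps\ge f_2+\eps\ge f_1$ on $\Om\setm(K_\eps\cup K')$, and $K_\eps\cup K'\Subset\Om$, so $u+\eps\in\UU^W_{f_1}$. Taking the infimum, $\uW f_1\le u+\eps$; then infimising over $u$ gives $\uW f_1\le \uW f_2+\eps$, and letting $\eps\to 0$ yields $\uW f_1\le \uW f_2$. Swapping the roles of $f_1$ and $f_2$ gives the opposite inequality, hence $\uW f_1=\uW f_2$. The same argument works verbatim for $\uZ$, because $u+\eps\in\Dp(\Om)$ whenever $u\in\Dp(\Om)$, so $u+\eps\in\DU^W_{f_1}$ when $u\in\DU^W_{f_2}$.

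The only nontrivial step is the compactness extraction giving the uniform estimate outside a compact subset of $\Om$; once that is in hand, closure of $\UU^W_f$ (and $\DU^W_f$) under adding a constant finishes the proof with no further work. No routine calculations are needed beyond this.
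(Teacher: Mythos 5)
Your proof is correct and follows essentially the same strategy as the paper's: use the boundary-limit hypothesis to get a $\tauone$-neighbourhood of $\bdyone\Om$ on which $|f_1-f_2|<\eps$, whose complement in $\Om$ is then a compact set, and shift upper-class functions by $\eps$ to pass between $\UU^W_{f_1}$ and $\UU^W_{f_2}$. The one small cosmetic difference: you extract a finite subcover of $\bdyone\Om$ to build $K_\eps$, whereas the paper simply sets $\Kt=\Om\setm\bigcup_{x\in\bdyone\Om}U_x$ directly and notes it is compact (since $\clOmone\setm\bigcup_x U_x\subset\Om$ is closed in the compact space $\clOmone$) — the finite subcover is unnecessary but harmless. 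Also, the paper shifts the target function, showing $u\in\UU^W_{f_2-\eps}$ and concluding $\uW(f_2-\eps)\le\uW f_1$, while you shift the upper-class function $u\mapsto u+\eps$; these are equivalent since $\uW(g-\eps)=\uW g-\eps$. Your remark that $u+\eps$ stays in $\Dp(\Om)$ correctly handles the $\uZ$ case.
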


Note that we do not require $f_1$ and $f_2$ to have limits
at $\bdyone \Om$, it is enough that the difference has limits.
In fact, if $f_1$ and $f_2$ do have equal limits everywhere on $\bdyone \Om$,
but some of the limits are infinite, then the first identity does 
not necessarily hold.
Let e.g.\ $f_1=f$ and $f_2=2f$ in Example~\ref{ex-punctured-ball}.
Then $f_1$ and $f_2$ have the same limits at all boundary points,
but $\uW f_1=f_1 \ne f_2 =\uW f_2$.

\begin{openprob}
Are there functions $f_1$ and $f_2$ with the same (necessarily not
only finite)
limits at all boundary
points, but with $\uZ f_1 \ne \uZ f_2$?
\end{openprob}

\begin{proof}
Let $\eps >0$.
Then for each $x \in \bdyone \Om$, there is a $\tauone$-neighbourhood
$U_x$ of $x$ such that $|f_1-f_2|<\eps$ in $U_x$.
Let $\Kt=\Om \setm \bigcup_{x \in \bdyone \Om} U_{x}$
 which is a compact subset of $\Om$.

For each $u \in \UU_{f_1}^W$ there is $K \Subset \Om$
such that $u \ge f_1$ in $\Om \setm K$.
Hence $u \ge f_2-\eps$ in $\Om \setm (K \cup \Kt)$,
and thus $u \in \UU_{f_2-\eps}^W$.
Taking infimum over all $u \in \UU_{f_1}^W$, shows that
$\uW (f_2 -\eps) \le \uW f_1$.
Letting $\eps \to 0$, shows that $\uW f_2\le \uW f_1$.
Similarly $\uZ f_2 \le \uZ f_1$.
The converse inequalities follow by swapping the roles of $f_1$ and $f_2$.
\end{proof}

Let
\begin{align*}
\mathcal{W}(\Omega) &= \{f:\Om\to\eR : f \textrm{ is } \textrm{harmonizable}\}, \\
\mathcal{SW}(\Omega) &= \{f:\Om\to\eR : f \textrm{ is Sobolev-harmonizable}\}.
\end{align*}
It is straightforward to show that 
$\mathcal{W}(\Omega)$ and $\mathcal{SW}(\Omega)$ are closed in the supremum norm,
cf.\ 
Proposition~\ref{prop-unif-Omm}.

\begin{thm}  \label{thm-res-newt-res}
Let $Q$ be a sublattice of $\Cbdd(\Omega)$ which contains the constant functions.
Then $\partial^Q \Omega$ is resolutive if and only if $Q \subset \mathcal{W}(\Omega)$, 
and Sobolev-resolutive if and only if $Q \subset \mathcal{SW}(\Omega)$.
\end{thm}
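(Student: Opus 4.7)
My plan is to combine three earlier results: the correspondence between resolutivity on $\bdy^Q\Om$ and harmonizability on $\Om$ for continuous functions (Theorem~\ref{thm-cont-harm-res}), the density statement of Theorem~\ref{thm-dense}, and the stability of (Sobolev)-resolutivity under uniform convergence (Proposition~\ref{prop-unif-Omm}). I will handle both the resolutive and Sobolev-resolutive cases in parallel, since each of the three auxiliary results comes in both flavours.

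For the ``only if'' direction, suppose $\bdy^Q\Om$ is (Sobolev)-resolutive, and take $f\in Q$. By Definition~\ref{def-Q-comp}\,\ref{it-def-comp-f-hat}, $f$ extends to some $\fh\in\Cc(\clOmQ)$. Then $\fh|_{\bdy^Q\Om}\in\Cc(\bdy^Q\Om)$ is (Sobolev)-resolutive by hypothesis, so Theorem~\ref{thm-cont-harm-res} forces $\fh|_\Om = f$ to be (Sobolev)-harmonizable; hence $f\in\mathcal{W}(\Om)$ (resp.\ $f\in\mathcal{SW}(\Om)$).

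For the ``if'' direction, assume $Q\subset\mathcal{W}(\Om)$ (resp.\ $Q\subset\mathcal{SW}(\Om)$), and set
\[
\Qh = \{f|_{\bdy^Q\Om} : f\in\Cc(\clOmQ) \text{ and } f|_\Om\in Q\}.
\]
For each such $f$, the restriction $f|_\Om\in Q$ is (Sobolev)-harmonizable by hypothesis, so Theorem~\ref{thm-cont-harm-res} shows that $f|_{\bdy^Q\Om}$ is (Sobolev)-resolutive; thus every element of $\Qh$ is (Sobolev)-resolutive. Since $Q$ is a sublattice of $\Cbdd(\Om)$ containing the constants, Theorem~\ref{thm-dense} implies that $\Qh$ is dense in $\Cc(\bdy^Q\Om)$. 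Given any $h\in\Cc(\bdy^Q\Om)$, choose $\{h_j\}\subset\Qh$ with $h_j\to h$ uniformly on $\bdy^Q\Om$; Proposition~\ref{prop-unif-Omm} then shows that $h$ is (Sobolev)-resolutive. Hence $\bdy^Q\Om$ is (Sobolev)-resolutive.

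There is no substantial obstacle: the preceding sections have been arranged so that each ingredient is available in both a resolutive and a Sobolev-resolutive version, making the two halves run in tandem without modification. The only bookkeeping item worth noting is that the hypothesis ``$Q$ is a sublattice containing the constants'' is used solely to invoke Theorem~\ref{thm-dense}; the remaining steps use only that $Q\subset\Cbdd(\Om)$ and the defining properties of a $Q$-compactification.
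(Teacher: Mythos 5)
Your proof is correct and matches the paper's argument essentially verbatim: both directions rest on Theorem~\ref{thm-cont-harm-res} to move between harmonizability on $\Om$ and resolutivity on $\bdy^Q\Om$, with Theorem~\ref{thm-dense} and Proposition~\ref{prop-unif-Omm} supplying the density/uniform-limit step in the ``if'' direction, and the two cases (resolutive and Sobolev-resolutive) run in parallel exactly as you describe.
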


\begin{proof} 
First assume that $Q \subset \mathcal{W}(\Omega)$.
Let $
   \Qh=\{f|_{\partial^Q \Omega}: 
     f \in \Cc(\clOmQ)
     \text{ and } f|_\Omega \in Q\}
$.
Then 
all functions in $\Qh$ are resolutive, by Theorem~\ref{thm-cont-harm-res}.
As $\Qh$ is dense in $\Cc(\partial^Q \Omega)$,
by Theorem~\ref{thm-dense},
it follows from Proposition~\ref{prop-unif-Omm}
that all functions in $\Cc(\partial^Q \Omega)$ are resolutive,
i.e.\ $\partial^Q \Omega$ is resolutive.

Conversely, assume that $\partial^Q \Omega$ is resolutive,
i.e.\ the functions in $\Cc(\partial^Q \Omega)$ are
resolutive.
It then follows from Theorem~\ref{thm-cont-harm-res} that
$\{f|_\Om: f\in \Cc(\clOmQ)\} \subset \mathcal{W}(\Omega)$. 
By the definition of $\bdy^Q \Om $, we see that 
$Q \subset \{f|_\Om: f\in \Cc(\clOmQ)\} 
\subset \mathcal{W}(\Omega)$.

The second equivalence is proved similarly.
\end{proof}

There is one technical difficulty here in which 
the nonlinear potential theory differs
from the linear. In the linear potential theory it is 
well known that $\QW=\mathcal{W}(\Omega)\cap \Cbdd(\Omega)$ is a 
vector 
lattice. 
Hence the $\QW$-compactification, 
which in this case is usually called the Wiener 
compactification, is well known to be resolutive 
(e.g.\ by Theorem~\ref{thm-res-newt-res}),
and it is the largest resolutive compactification. 
In the nonlinear setting we do not know whether this compactification 
becomes resolutive or not, 
as we do not know if
$\QW$ is a 
vector 
lattice. The problem is that 
the construction of a $Q$-compactification only guarantees that the 
functions in $Q$ separate the points of the boundary, not that the set is 
dense among all continuous functions on the boundary. However, in case 
$\QW$ is 
also a vector 
lattice then Theorem~\ref{thm-dense} guarantees that this is 
indeed the case.

At the same time,
although not directly applicable here, the main result
(Theorem~1.1)
in Llorente--Manfredi--Wu~\cite{LoMaWu} indicates that
in general $\QW$ is not likely to be a vector lattice. 
Their result shows that for the upper half plane (which is unbounded and thus
not included here) whenever $1<p<\infty$ and $p \ne 2$, 
there are finitely many sets $E_1,\  E_2, \ldots, E_n$ such
that $\R=\bigcup_{j=1}^n E_j$ while $\Hp \chi_{E_j} \equiv0$ for all $j$
and $\Hp \chi_{\R} \equiv 1$.
Thus if $E\subset \R$ is a set so that $\chi_E$ is nonresolutive
(such sets can be constructed in the same way as nonmeasurable sets),
and we let $f_j=\chi_{E \cap E_j}$,
then $\Hp f_j = 0$ (so $f_j$ is resolutive), but $\sum_{j=1}^n f_j = \chi_E$ is 
nonresolutive.
Hence $\QW$ is \emph{not} a vector space.

\begin{example} \label{ex-Martin}
We end this section by recalling some results from 
the linear potential theory for the Laplacian in $\R^n$, $n \ge 2$. 
Assume that $\Omega$ is a bounded domain in (unweighted) $\R^n$.
Apart from the Euclidean boundary, which is always resolutive, 
there is in particular one boundary which is of large 
interest for the Dirichlet problem, the  
Martin boundary. 
For sets which have nice (e.g.\ Lipschitz) 
boundaries these two compactifications are homeomorphic. 
However if $\Omega$ has for instance a large part of the boundary 
which is not one-sided (such as a slit disc in the plane), then the Euclidean boundary is not internally resolutive. 
Hence there is no chance 
of having
a Poisson-type representation of all bounded harmonic functions 
such as for balls.
To remedy this problem, Martin introduced his compactification in \cite{martin}, which is a metrizable compactification with many useful properties. 

To understand how the Martin boundary is constructed,
we fix a ball $B \Subset \Omega$ and introduce the functions
\[
   M(x,y)= \frac{G_{\Omega}(x,y)}{\int_B G_{\Omega}(z,y) \, d \mu(z)},
\]
where $G_\Om$ is the Green function for $\Om$.
The function $M$ is the  
\emph{Martin kernel}. 
The $Q_M$-compactification with $Q_M=\{M(x,\cdot\,):x \in \Omega\}$ is called the 
\emph{Martin compactification} and the \emph{Martin boundary}
 is  
$\bdy^{Q_M} \Om$.

It is well known that the Martin boundary is 
always both resolutive and internally resolutive 
(see Doob~\cite[Section~1.XII.10]{Doobbook}).
Furthermore, there is a Poisson-type 
representation for every bounded harmonic function of the form
\[
  h(x)=\int_{\bdy^{Q_M} \Om} M(x,y) f(y)\, d \nu(y),
\]
where $\nu$ is the harmonic measure on $\bdy^{Q_M} \Om$.

For simply connected planar domains
this boundary is reasonably simple to understand. 
Indeed, it is the compactification induced by relating the domain 
to the unit disc via the Riemann map (and is hence homeomorphic to
the prime end boundary of Carath\'eodory). 
This implies that even
for simply connected planar domains the following cases are possible:
(i) $\bdy \Om \simeq  \bdy^{Q_M} \Om$;
(ii) $\bdy \Om \prec \bdy^{Q_M} \Om \not \prec \bdy \Om$;
(iii) $\bdy^{Q_M} \Om \prec \bdy \Om \not \prec \bdy^{Q_M} \Om$;
and (iv) $\bdy \Om \not \prec \bdy^{Q_M} \Om \not \prec \bdy \Om$,
see Example~10.2 in Bj\"orn--Bj\"orn--Shanmugalingam~\cite{BBSdir}.
Thus in 
general there is no immediate topological relation between the 
metric boundary 
$\bdy \Om$ and the Martin boundary $\bdy^{Q_M} \Om$.
(There is however a measure-theoretic relation in the sense that there is a 
``measurable'' projection from $\bdy^{Q_M} \Om$ 
to $\bdy \Om$ 
defined on a set of full harmonic measure and such that the harmonic measure 
on $\bdy \Om$ 
is the image of the harmonic measure on $\bdy^{Q_M} \Om$
for this map. So from the point of view of harmonic measure, 
the Martin boundary is always larger than the Euclidean boundary. 
See for instance Mountford--Port~\cite{MP}.)

In higher dimensions the situation is more subtle even for reasonably simple domains. 
For instance, let 
$\Om = B_2 \setm \itoverline{B}_1$, 
where $B_1 \subset B_2 \subset \R^3$ are balls such that
$\bdy B_1 \cap \bdy B_2=\{0\}$.
Then every point of the Euclidean boundary apart from $0$ corresponds to one point 
on the Martin boundary,  
but $0$ corresponds to infinitely many points. 
(By a suitable Kelvin transformation, the problem can be 
transformed 
to the unbounded region between two parallel planes and 
with $0$ corresponding to $\infty$. 
Then the Martin compactification ``identifies'' $0$
with a circle attached at infinity.)

For the above results we refer to 
Armitage--Gardiner~\cite[Chapter~8]{AGbook}. 
The construction of the compactification is however more direct both
in \cite{AGbook} and Martin's original article \cite{martin} 
introducing the 
Martin metric  
and making a completion, which is then proved to be compact.
\end{example}

\section{Sobolev-harmonizability}
\label{sect-Sobolev-harm}

\begin{prop}   \label{prop-Kf-qe-invar}
Let $f,h:\Om\to\eR$ be arbitrary functions. 
If $h=0$ q.e.\ in $\Om$, then $\uZ f = \uZ(f+h)$.

In particular, if $f$ is Sobolev-harmonizable, then so is $f+h$ 
and $Z f= Z(f+h)$.
\end{prop}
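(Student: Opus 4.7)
The plan is to transplant the argument of Proposition~\ref{prop-strres-invariance} from the boundary (working with $\bCp(\,\cdot\,;\Omone)$) to the interior (working with $\Cp$) and from Sobolev--Perron to Sobolev--Wiener solutions. First I would reduce to the case $h\ge0$: once the identity is established for nonnegative $h$, the general case follows by applying it twice, to $h_\limplus$ and to $(-h)_\limplus$. Moreover, the trivial monotonicity $\DU^W_{f_1}\supset\DU^W_{f_2}$ for $f_1\le f_2$ yields the easy inequality $\uZ f\le\uZ(f+h)$ when $h\ge0$, so only $\uZ(f+h)\le\uZ f$ needs real work.

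Fix $u\in\DU^W_f$, say with $u\ge f$ on $\Om\setm K$ for some compact $K\Subset\Om$, and let $E=\{x\in\Om:h(x)\ne0\}$, so $\Cp(E)=0$. Using that $\Cp$ is an outer capacity, I would choose open $G_j\subset\Om$ with $E\subset G_j$ and $\Cp(G_j;\Om)<2^{-jp}$, and set $U_k=\bigcup_{j>k}G_j$; these decrease, contain $E$, and satisfy $\Cp(U_k;\Om)<2^{-kp}$ (this is where $p>1$ is used). Applying Lemma~\ref{lem-Newt} to the metric compactification, on subsets of which $\bCp(\,\cdot\,;\Om)=\Cp(\,\cdot\,;\Om)$, produces decreasing nonnegative $\psi_j\in\Np(\Om)$ with $\|\psi_j\|_{\Np(\Om)}<2^{-j}$ and $\psi_j\ge k-j$ pointwise on $U_k$. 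I would then set $v_j=u+\psi_j\in\Dp(\Om)$ and take $\phi_j$ to be the lsc-regularized solution of the $\K_{v_j,v_j}(\Om)$-obstacle problem; by the comparison principle from~\cite[Corollary~4.3]{BBnonopen}, $\phi_j\ge u$ in all of $\Om$.

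The main obstacle, and where the proof genuinely diverges from Proposition~\ref{prop-strres-invariance}, is that membership in $\DU^W_{f+h}$ demands a pointwise inequality $\phi_j\ge f+h$ on $\Om\setm K$, whereas its boundary analogue only requires a liminf; since $h$ may be unbounded on $E$, I must show $\phi_j\equiv\infty$ pointwise on $E$. The key observation is that $\psi_j\ge m$ holds pointwise on the \emph{open} set $U_{m+j}$, so the obstacle constraint $\phi_j\ge v_j=u+\psi_j$ q.e.\ yields $\phi_j\ge m+\inf_\Om u$ a.e.\ on $U_{m+j}$. The lsc-regularization formula $\phi_j(x)=\lim_{r\to0}\essinf_{B(x,r)}\phi_j$, combined with the inclusion $B(x,r)\subset U_{m+j}$ for sufficiently small $r$ when $x\in U_{m+j}$, then upgrades this to the everywhere bound $\phi_j(x)\ge m+\inf_\Om u$ on $U_{m+j}$. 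Since $E\subset U_{m+j}$ for every $m$, taking $m\to\infty$ yields $\phi_j\equiv\infty$ on $E$.

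Given this, $\phi_j\in\DU^W_{f+h}$: it is superharmonic, lies in $\Dp(\Om)$, is bounded below by $\inf_\Om u$, and on $\Om\setm K$ satisfies $\phi_j\ge u\ge f=f+h$ outside $E$ and $\phi_j=\infty\ge f+h$ on $E$. Hence $\uZ(f+h)\le\phi_j$. Because $v_j\searrow u$ q.e.\ and $g_{v_j-u}=g_{\psi_j}\to0$ in $L^p(\Om)$, Proposition~\ref{prop-3.2} yields $\phi_j\searrow u$ q.e., so $\uZ(f+h)\le u$ q.e., and hence everywhere by lsc-regularization of $u$ and continuity of the \p-harmonic function $\uZ(f+h)$. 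Taking the infimum over $u\in\DU^W_f$ gives $\uZ(f+h)\le\uZ f$, completing the first claim. Applying the identity also with $(f,h)$ replaced by $(-f,-h)$ gives $\lZ(f+h)=\lZ f$, so Sobolev-harmonizability of $f$ is inherited by $f+h$ with $Z(f+h)=Zf$.
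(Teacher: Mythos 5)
Your proof is correct and follows essentially the same route as the paper's: reduce to $h\ge 0$, use outer regularity of $\Cp$ to build decreasing open sets $U_k$ covering $E=\{h\ne 0\}$ with $\Cp(U_k)<2^{-kp}$, apply Lemma~\ref{lem-Newt} to get $\psi_j$, solve the obstacle problem with obstacle $v_j=u+\psi_j$, show the resulting $\phi_j$ lie in $\DU^W_{f+h}$ (with $\phi_j\equiv\infty$ on $E$), and pass to the limit via Proposition~\ref{prop-3.2}. The only differences from the paper are cosmetic: you build $U_k$ as a union $\bigcup_{j>k}G_j$ while the paper picks a decreasing sequence directly, and you unpack the lsc-regularization step $\phi_j=\phi_j^*\ge f_j^*\ge\psi_j^*$ via the explicit $\essliminf$ formula together with openness of $U_{m+j}$ — which is exactly what the paper's $*$-notation is shorthand for. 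Your version is in fact slightly more careful in carrying the additive constant $\inf_\Om u$ through the estimate rather than tacitly normalizing it away.
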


\begin{proof}
First assume that $h \ge 0$.
Since $h=0$ q.e.\ and $C_p$ is an outer capacity,
by Corollary~1.3 in Bj\"orn--Bj\"orn--Shanmugalingam~\cite{BBS5} 
(or \cite[Theorem~5.31]{BBbook}),
we can find a decreasing 
sequence of open sets $U_j\subset \Om$ such that $h=0$ in $\Om\setm U_j$ and 
$\bCp(U_j;\Om)\le\Cp(U_j)<2^{-jp}$, $j=1,2,\ldots$.
Consider the decreasing sequence
of nonnegative functions $\{\psi_j\}_{j=1}^\infty$ given by 
Lemma~\ref{lem-Newt}
with respect to $\{U_j\}_{j=1}^\infty$.

Let $u\in\DU^W_f$ be arbitrary, 
set $f_j = u+\psi_j$ and let $\phi_j$ be the 
lsc-regularized solution of the $\K_{f_j,f_j}$-obstacle problem.
Then 
\[
\phi_j = \phi_j^* \ge f_j^* \ge u^* = u \ge f
\]
in $\Om\setm K$ for some $K\Subset\Om$.
Moreover, in $U_{j+m}$ we have
\[
\phi_j = \phi_j^* \ge f_j^* \ge \psi_j^* \ge m.
\]
Thus, if $h(x)\ne0$, then $\phi_j(x)\ge m$ for all $m=1,2,\ldots$, i.e.\ 
$\phi_j(x)=\infty\ge f(x)+h(x)$.
It follows that $\phi_j\ge f+h$ in $\Om\setm K$ and hence 
$\phi_j\ge\uZ(f+h)$ in $\Om$.
Since $u\in\Dp(\Om)$ is a solution of the 
$\K_{u,u}(\Om)$-obstacle problem and $g_{f_j-u} \to 0$ in $L^p(\Om)$, 
Proposition~\ref{prop-3.2} implies that the sequence $\{\phi_{j}\}_{j=1}^\infty$ 
decreases q.e.\ to $u$, showing that $u\ge\uZ(f+h)$ q.e.\ in $\Om$.
As both functions are lsc-regularized,
the inequality 
holds everywhere in $\Om$.
Taking infimum over all $u\in\DU^W_f$ gives $\uZ f \ge \uZ(f+h)$, 
while the opposite inequality is trivial as $h\ge 0$.

Using this we see that for a general $h$, we have
$\uZ (f+h) = \uZ (f+h_\limplus) = \uZ f$.
The last part follows by applying this also to $-f$ and $-h$.
\end{proof}

\begin{thm}   \label{thm-Np-imp-harm}
Every $f\in\Dp(\Om)$ is Sobolev-harmonizable and 
\begin{equation} \label{eq-thm-Np-imp-harm}
W f=Z f=Hf.
\end{equation}
\end{thm}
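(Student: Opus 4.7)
The plan is to show, using Proposition \ref{prop-lK<=uK} (which gives $\lZ f \le \lW f \le \uW f \le \uZ f$), that it suffices to prove $\uZ f \le Hf$; applying the same inequality to $-f$ then yields $\lZ f \ge Hf$, and \eqref{eq-thm-Np-imp-harm} together with the Sobolev-harmonizability of $f$ follows. Since $Hf \in \Dp(\Om)$ is $p$-harmonic, it is real-valued on $\Om$. To handle the possible unboundedness of $f$ from below, I first truncate: set $f_N = \max(f,-N) \in \Dp(\Om)$, so that $f_N$ decreases to $f$ and
\[
g_{f_N-f} = g_f\chi_{\{f<-N\}} \to 0 \quad \text{in } \Lp(\Om)
\]
by dominated convergence. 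Proposition \ref{prop-3.2} then gives $Hf_N \to Hf$ locally uniformly, reducing the task to showing $\uZ f \le Hf_N$ for each $N$.

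Fix $N$, and let $u_N$ be the lsc-regularized solution of the $\K_{f_N,f_N}(\Om)$-obstacle problem. Then $u_N$ is superharmonic, belongs to $\Dp(\Om)$ with $u_N - f_N \in \Np_0(\Om)$, and since $u_N \ge f_N \ge -N$ q.e.\ and $u_N$ is lsc-regularized, $u_N \ge -N$ everywhere in $\Om$. I next choose an exhaustion $G_1 \Subset G_2 \Subset \dots \Subset \Om$ with $\bigcup_j G_j = \Om$ and define $v_j$ to be the Poisson modification of $u_N$ in $G_j$: $v_j = \oHpind{G_j}(u_N|_{G_j})$ on $G_j$ and $v_j = u_N$ on $\Om\setm G_j$. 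Then $v_j$ is superharmonic on $\Om$, $p$-harmonic in $G_j$, lies in $\Dp(\Om)$, and $v_j \le u_N$. Since $v_j - f_N \in \Np_0(\Om)$, the standard comparison of the minimizer $Hf_N$ with the superminimizer $v_j$ (sharing Sobolev boundary data) yields $v_j \ge Hf_N \ge -N$, so $v_j$ is bounded below.

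In $\Om\setm G_j$ we have $v_j = u_N \ge f_N \ge f$ q.e., so after modifying $f$ on a capacity-zero set (which does not affect $\uZ f$, by Proposition \ref{prop-Kf-qe-invar}) I obtain $v_j \in \DU^W_f$ and hence $\uZ f \le v_j$. The sequence $\{v_j\}$ is decreasing, and its pointwise limit $v$ is $p$-harmonic on $\Om$ as a locally uniform limit of $p$-harmonic functions, with $v \ge Hf_N$. Using that $v_j - u_N \in \Np_0(G_j) \subset \Np_0(\Om)$ with uniformly bounded $\Np$-norm (from energy minimization of $v_j$ in $G_j$), I extract a weak limit showing $v - u_N \in \Np_0(\Om)$, whence $v - f_N \in \Np_0(\Om)$; uniqueness of the Dirichlet problem with Sobolev boundary data then forces $v = Hf_N$. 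This gives $\uZ f \le Hf_N$, completing the reduction. The main obstacle is precisely this identification $v = Hf_N$ of the decreasing limit of the Poisson modifications, which requires careful control of $\Np_0$-convergence; the remaining steps are routine applications of the obstacle and comparison machinery developed earlier in the paper.
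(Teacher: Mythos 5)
Your proof is correct in strategy but takes a genuinely different route from the paper's. The paper fixes a single exhaustion $\Om_1 \subset \Om_2 \subset \cdots$ and solves the $\K_{\psi_n,f}(\Om)$-obstacle problem where $\psi_n = f$ in $\Om\setm\Om_n$ and $\psi_n = -\infty$ in $\Om_n$; this single obstacle problem yields, in one shot, a superminimizer $u_n\in\Dp(\Om)$ that is $p$-harmonic in $\Om_n$ and dominates $f$ outside $\Om_n$, so $u_n\in\DU^W_{\tilde f}$ for a suitable $\tilde f = f$ q.e.\ and hence $u_n\ge\uZ f$. You instead first truncate $f_N=\max\{f,-N\}$ and reduce via Proposition~\ref{prop-3.2}, then solve $\K_{f_N,f_N}$ and Poisson-modify on an exhaustion $G_j$, so you have two layers of approximation where the paper has one. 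What your truncation buys: the functions in $\UU^W_f$ are required to be bounded from below, and your $u_N\ge -N$ makes this immediate, whereas the paper does not explicitly verify that its $u_n$ is bounded below when $f$ itself is unbounded below (one can restore this, but it is a point worth making explicit). What the paper's route buys: no truncation step, and the identification of the limit $h=\lim_n u_n$ with $Hf$ is done cleanly by combining lower semicontinuity of the $p$-energy (to show $h\in\Dp(\Om)$) with the sandwich lemma $Hf\le h\le u_1$ to conclude $h-f\in\Np_0(\Om)$. Your corresponding step --- ``extract a weak limit showing $v-u_N\in\Np_0(\Om)$'' --- is the least finished part of your argument: $\Np(\Om)$ is not obviously reflexive, so weak compactness should not be invoked casually; you would do better to mimic the paper here by first establishing $v\in\Dp(\Om)$ via lower semicontinuity of the energy along the decreasing sequence $v_j$, and then apply the sandwich $Hf_N\le v\le u_N$ (with $Hf_N-f_N,\,u_N-f_N\in\Np_0(\Om)$) to get $v-f_N\in\Np_0(\Om)$ and hence $v=Hf_N$. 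With that substitution your proof is complete and no less rigorous than the paper's.
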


For Wiener solutions and harmonizability, the corresponding
result on $\R^n$ was obtained by 
Maeda--Ono~\cite[Theorem~2.2]{MaedaOnoHiro2000}.
Their proof is quite different from ours.

\begin{proof}
It is enough to prove that $\uZ f \leq Hf$.
Then also 
\[
\lZ f =-\uZ(-f) \geq -H(-f)=Hf, 
\]
and we get \eqref{eq-thm-Np-imp-harm} 
using Proposition~\ref{prop-lK<=uK}.

To prove that $\uZ f \leq Hf$, 
let $\Om_1\subset \Om_2 \subset \ldots \Subset \Omega$ 
be an exhaustion of $\Omega$ by open sets.
Let $u_n$ be the lsc-regularized solution of the $\K_{\psi_n,f}(\Om)$-obstacle
problem with 
\[
\psi_n= \begin{cases}
         f, & \text{in } \Om\setm\Om_n, \\
         -\infty, & \text{in }  \Om_n, 
         \end{cases}   \quad n=1,2\ldots.
\]
Then $u_n$ is superharmonic in $\Om$, \p-harmonic in $\Om_n$
 and belongs to $\Dp(\Om)$.
Moreover, $u_n\ge\psi_n$ in $\Om\setm E_n$, where $\Cp(E_n)=0$.
It follows that $u_n\ge \tilde{f}$ in $\Om\setm\Om_n$, where
\[
\tilde{f} = \begin{cases}
         f, & \text{in } \Om\setm\bigcup_{n=1}^\infty E_n, \\
         -\infty, & \text{in }  \bigcup_{n=1}^\infty E_n,
         \end{cases}
\]
and $\tilde{f}= f$ q.e.
Thus, $u_n\in\DU^W_{\tilde{f}}(\Om)$ and, 
in view of Proposition~\ref{prop-Kf-qe-invar}, this implies that 
$u_n\ge \uZ \tilde{f} = \uZ f$.

By 
the comparison principle in
Bj\"orn--Bj\"orn~\cite[Corollary~4.3]{BBnonopen},
we see that $\{u_n\}_{n=1}^\infty$ is a decreasing sequence
and
$u_n\ge Hf\not\equiv-\infty$.
(As these functions are 
lsc-regularized this holds everywhere.)
Harnack's convergence theorem,
see Shanmugalingam~\cite[Proposition~5.1]{Sh-conv} 
(or \cite[Corollary~9.38]{BBbook}), implies that
$u_n \searrow h\in \Nploc(\Om)$, where $h$ is \p-harmonic in $\Omega$.
By construction, $\|g_{u_n}\|_{L^p(\Om)}$ is decreasing.
As $u_n \in \Nploc(\Om)$, we see that for each $j$, 
$\|u_n\|_{L^p(\Om_j)} \le \|u_1\|_{L^p(\Om_j)}+ \|Hf\|_{L^p(\Om_j)}$
and thus $\{u_n\}_{n=1}^\infty$ is a bounded sequence in $\Np(\Om_j)$.
Hence, by Corollary~6.3 in \cite{BBbook}, 
\[
   \int_\Om g_h^p \,d\mu \le \liminf_{n \to \infty} 
  \int_\Om g_{u_n}^p \,d\mu,
\]
showing that $h \in \Dp(\Om)$.
Since   $Hf \le h \le u_1$, $Hf-f \in \Np_0(\Om)$ and $u_1-f \in \Np_0(\Om)$,
it follows from Lemma~2.8 in Hansevi~\cite{Hansevi1} that
$h-f \in \Np_0(\Om)$.
Hence $h=Hf$, by the 
uniqueness of $Hf$.
Finally, as $u_n\ge \uZ f$ for all $n$, we conclude that
$Hf\ge \uZ f$, and the result follows.
\end{proof}

\begin{cor} \label{cor-N1P-res}
If $Q \subset \Qh:=\Cbdd(\Omega) \cap \Dp(\Om)$, 
then $\partial^Q \Om$ is Sobolev-resolutive.
\end{cor}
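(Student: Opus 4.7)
The plan is to reduce to Theorem~\ref{thm-res-newt-res} applied to a slightly enlarged collection $\Kt \supset Q$ that is a sublattice of $\Cbdd(\Om)$ containing the constants. The hypothesis $Q \subset \Cbdd(\Om)\cap \Dp(\Om)$ by itself need not provide a sublattice, so the first step is to close $Q$ up under the lattice operations and adjunction of constants, while staying inside $\Cbdd(\Om)\cap\Dp(\Om)$.

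First, I would observe that $\Cbdd(\Om)\cap\Dp(\Om)$ is itself a sublattice of $\Cbdd(\Om)$ containing the constants. The only nontrivial point is that if $f,h \in \Dp(\Om)$, then so are $\max\{f,h\}$ and $\min\{f,h\}$, with minimal \p-weak upper gradient bounded a.e.\ by $\max\{g_f,g_h\}$; combined with the obvious fact that constants lie in $\Dp(\Om)$, this gives the lattice property. Let $\Kt$ denote the sublattice of $\Cbdd(\Om)$ generated by $Q \cup \R$; then $\Kt \subset \Cbdd(\Om)\cap\Dp(\Om)$.

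Next, Theorem~\ref{thm-Np-imp-harm} shows that every element of $\Dp(\Om)$ is Sobolev-harmonizable, and hence $\Kt \subset \mathcal{SW}(\Om)$. Since $\Kt$ is a sublattice of $\Cbdd(\Om)$ containing the constant functions, Theorem~\ref{thm-res-newt-res} applies and gives that $\bdy^{\Kt}\Om$ is Sobolev-resolutive.

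Finally, to descend from $\Kt$ back to $Q$: since $Q \subset \Kt$, Proposition~\ref{prop-ImProd} yields $\bdy^Q \Om \prec \bdy^{\Kt}\Om$, and Proposition~\ref{prop-resolutive-prec}\,\ref{k-res} then transfers Sobolev-resolutivity downwards, so $\bdy^Q \Om$ is Sobolev-resolutive as claimed. There is no real obstacle beyond checking the lattice closure step; all the substantial content is absorbed into Theorems~\ref{thm-Np-imp-harm} and~\ref{thm-res-newt-res}.
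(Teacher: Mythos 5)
Your proof is correct and follows essentially the same route as the paper: observe that one has a sublattice of $\Cbdd(\Om)\cap\Dp(\Om)$ containing the constants, apply Theorem~\ref{thm-Np-imp-harm} to conclude it lies in $\mathcal{SW}(\Om)$, invoke Theorem~\ref{thm-res-newt-res}, and descend to $\bdy^Q\Om$ via Proposition~\ref{prop-resolutive-prec}. The only cosmetic difference is that you pass through the lattice $\Kt$ generated by $Q\cup\R$, whereas the paper applies the argument directly to the full space $\Qh=\Cbdd(\Om)\cap\Dp(\Om)$, which (as you note) is already a sublattice containing the constants.
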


Note that $\Qh= \Cbdd(\Omega) \cap \Np(\Om)$ as $\Om$ 
is bounded.
We do not know if $\bdy^{\Qh} \Om$ is metrizable in general.
To see that it can be
metrizable, let $\Om = B(0,1)$ be the unit ball in (unweighted) $\R^n$ 
and let $p>n$. As
$\Om$ is an extension domain it follows that 
\[
  \{f|_{\Om} : f \in \Lip(\itoverline{\Om})\}
 \subset \Qh
 \subset  \{f|_{\Om} : f \in C(\itoverline{\Om})\}
\]
and hence $\clOm^{\Qh}\simeq \clOm$ is metrizable.

For resolutivity,  the corresponding
result on $\R^n$ was obtained by 
Maeda--Ono~\cite[Theorem~3.2]{MaedaOnoJMSJ2000} (and is the main 
result therein). Their
proof is not based on Theorem~\ref{thm-Np-imp-harm},
which they did not have at their disposal at that time.
In Maeda--Ono~\cite[bottom p.\ 519]{MaedaOnoHiro2000},
they do note that this corollary can be obtained in 
a way similar to our proof.

\begin{proof}
By Theorems~\ref{thm-res-newt-res}  and~\ref{thm-Np-imp-harm},
$\bdy^{\Qh} \Om$ is Sobolev-resolutive.
Hence, by Proposition~\ref{prop-resolutive-prec}, $\bdy^Q \Om$ 
is Sobolev-resolutive.
\end{proof}

\begin{remark} 
Let $d_1$ be a metric on $\Om$ and assume that its completion 
leads to a compact space $\clOmone$.
If  $d_1(x,\cdot\,)\in \Dp(\Om)$ for each $x\in\Om$, then by 
Corollary~\ref{cor-N1P-res} together with Proposition~\ref{prop-Q=d(x,.)} 
we see that $\partial^1 \Om$ is Sobolev-resolutive.

One particular application is the case of the Mazurkiewicz distance 
$d_M$, see \eqref{eq-Mazurkiewicz-distance}.
The completion with respect to $d_M$ is 
compact if and only if $\Om$ is finitely connected at the boundary,
by Bj\"orn--Bj\"orn--Shanmugalingam~\cite[Theorem~1.1]{BBSmbdy}.
Moreover, 
for every curve $\ga$ connecting $x$ and $y$, we have
that $d_M(x,y) \le \int_\ga 1\,ds$, which shows that
for each  $x \in \Omega$, 
the constant function $1$ is an upper gradient of $d_M(x,\cdot\,)$. 
As $\Om$ is assumed to be bounded with respect to the given metric, 
it is also bounded with respect to $d_M$.
Hence, if $\Omega$ is finitely connected at the boundary, 
then 
$d_M(x,\cdot\,) \in N^{1,p}(\Omega)$, and the above applies in this case.
Perron solutions with respect to the Mazurkiewicz compactification
were studied in 
Bj\"orn--Bj\"orn--Shanmugalingam~\cite{BBSdir}. 

Similarly, Sobolev-resolutivity is guaranteed
by Corollary~\ref{cor-N1P-res}
for every metrizable compactification such 
that the distance functions $d'(x,\cdot\,)\in N^{1,p}(\Omega)$,
for all $x$ belonging
to a countable dense subset of $\Om$, where $d'$ is the
metric associated with the compactification.
\end{remark}

\begin{remark} \label{rmk-qe}
In Definition~\ref{def-Perron} 
we required that \eqref{eq-def-Perron} should hold 
for \emph{all} $x \in \bdyone \Om$, and similarly
we required that $u \ge f$ \emph{everywhere} in  $\Om \setm K$
in Definition~\ref{def-Weiner-soln}.
In the linear case it is well known that one can equivalently require
these inequalities to hold q.e., and one may ask if this is also true
in the nonlinear case.
For Sobolev--Perron  
and Sobolev--Wiener solutions this is indeed 
equivalent, by Corollary~\ref{cor-Sf-only-qe} and 
Proposition~\ref{prop-Kf-qe-invar}, 
but for ordinary Perron solutions
this is an open question in the nonlinear potential theory.
Nonlinear Perron solutions $\itoverline{Q} f$ and $\itunderline{Q} f$
with \eqref{eq-def-Perron} holding q.e.\ 
were studied
in Bj\"orn--Bj\"orn--Shan\-mu\-ga\-lin\-gam~\cite[Section~10]{BBS2}
and a major open question is if $\itunderline{Q} f \le \itoverline{Q} f$
always holds.

On the other hand, if we denote the 
upper and lower q.e.-Wiener solutions of $f$ by
$\overline{Y} f$ and $\itunderline{Y} f$, respectively,
then it is always true that $\itunderline{Y} f \le \overline{Y} f$.
To see this, let $u$ be a 
superharmonic function bounded from below, 
$v$ be a subharmonic function bounded from above,
and $K \Subset \Om$ be
such that $v \le f \le u$ q.e.\ in $\Om \setm K$.
If now $G$ is an open set such that $K \subset G \Subset \Om$,
then 
$v \le u$ \emph{everywhere} in $G$,
by Proposition~10.28 in \cite{BBbook} 
(as $u,v \in \Nploc(\Om) \subset \Np(\itoverline{G})$).
Since $G$ was arbitrary, $v \le u$ everywhere in $\Om$,
and hence $\itunderline{Y} f \le \overline{Y} f$.

Moreover, if $f$ is lower semicontinuous, then $\uW f = \overline{Y} f$,
as if $u$ is superharmonic and 
$u \ge f$ q.e.\ in $\Om \setm K$ for some compact $K \Subset \Om$, then
\[
   f(x) 
   \le \essliminf_{y \to x} f(y) 
   \le \essliminf_{y \to x} u(y) 
   =u(x),
\]
and thus $u \in \UU^W_f$ yielding $\uW f \le  \overline{Y} f$,
while the converse inequality is trivial.
\end{remark}

Lucia--Puls~\cite{LucPuls} studied resolutivity
for
the $Q_R$-compactification
of the whole space $X$ (when it is unbounded)
with respect to the \p-Royden algebra $Q_R=\Dp(X) \cap \Cbdd(X)$,
cf.\ Corollary~\ref{cor-N1P-res}.
Instead of the lattice property and Theorem~\ref{thm-Stone}, they used the 
Stone--Weierstrass theorem.

\section{\texorpdfstring{$\bCp(\,\cdot\,;\Omone)$}{}-quasi\-semi\-con\-tin\-u\-ous 
functions} 
\label{sect-quasisemi}

In this section we partially extend some of the results from 
Section~\ref{sect-harm} to quasisemicontinuous functions.
In particular, we relate Sobolev-harmonizability to Sobolev-resolutivity
for quasicontinuous functions.
For the results in this section it is important that we deal
with Sobolev--Perron and Sobolev--Wiener solutions.

\begin{deff}
A function $f:\clOmone\to\eR$ is \emph{lower {\rm(}upper\/{\rm)}
$\bCp(\,\cdot\,;\Omone)$-quasi\-semi\-con\-tin\-u\-ous} if for every
$\eps>0$ there exists a $\tauone$-open set $U$ such that $\bCp(U;\Omone)<\eps$ 
and $f|_{\clOmone\setm U}$ is lower (upper) semi\-con\-tin\-u\-ous.
\end{deff}

As $\bCp(\,\cdot\,;\Omone)$ is an outer capacity, by
Proposition~\ref{prop-outercap}, $u+h$ is 
$\bCp(\,\cdot\,;\Omone)$-quasisemicontinuous whenever
$u$ is $\bCp(\,\cdot\,;\Omone)$-quasisemicontinuous 
and $h=0$ $\bCp(\,\cdot\,;\Omone)$-q.e.

\begin{lem}   \label{lem-lim-max(f,m)}
For all functions $f$ it is true that 
$\uW f= \lim_{m\to-\infty} \uW \max\{f,m\}$,
and similar statements hold for $\uZ f$, $\uP_\Omone f$ and $\uS_\Omone f$.
\end{lem}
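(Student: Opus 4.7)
The plan is to use the monotonicity of $\uW$ (and the other operators) together with the fact that every function in $\UU^W_f$ is bounded from below by definition. Writing $f_m := \max\{f, m\}$, I observe that $f_m \ge f$ and $f_m$ decreases pointwise as $m \to -\infty$, so the limit exists (as a pointwise limit of a decreasing sequence of functions).

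First I would establish the easy inequality $\uW f \le \lim_{m\to-\infty} \uW f_m$. Since $f_m \ge f$, any $u \in \UU^W_{f_m}$ (superharmonic, bounded below, with $u \ge f_m$ on $\Om \setminus K$ for some $K \Subset \Om$) automatically satisfies $u \ge f$ on $\Om \setminus K$ and hence lies in $\UU^W_f$. Taking infimum gives $\uW f \le \uW f_m$ for every $m$, and the limit inequality follows.

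For the reverse direction, let $u \in \UU^W_f$ be arbitrary; by definition $u$ is superharmonic and bounded from below, say $u \ge M$ in $\Om$, and $u \ge f$ in $\Om \setminus K$. For any $m \le M$ we then have $u \ge m$ everywhere in $\Om$ and $u \ge f$ in $\Om \setminus K$, so $u \ge \max\{f,m\} = f_m$ in $\Om \setminus K$. Hence $u \in \UU^W_{f_m}$ and therefore $\uW f_m \le u$. Letting $m \to -\infty$ gives $\lim_{m \to -\infty} \uW f_m \le u$, and taking infimum over $u \in \UU^W_f$ yields the desired inequality $\lim_{m \to -\infty} \uW f_m \le \uW f$.

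The statements for $\uZ f$, $\uP_\Omone f$ and $\uS_\Omone f$ follow from essentially identical arguments: in each case, the class of admissible functions consists of functions bounded from below (from the superharmonic side), the inequality defining admissibility is preserved when $f$ is replaced by the larger $f_m$ in one direction, and in the other direction any admissible $u$ bounded below by $M$ remains admissible for $f_m$ when $m \le M$ (using $\liminf$ along $\tauone$ for the Perron solutions in place of the pointwise inequality on $\Om \setminus K$). The only point worth checking is that replacing $\UU^W_f$ by $\DU^W_f = \UU^W_f \cap \Dp(\Om)$ or by $\UU_f(\Omone)$ or $\DU_f(\Omone)$ does not affect the argument, which is immediate since membership in $\Dp(\Om)$ is unchanged and the lower bound of $u$ is intrinsic to $u$. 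There is really no serious obstacle here; the proof is a short two-inequality verification.
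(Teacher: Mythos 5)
Your proof is correct and uses exactly the idea the paper relies on: the paper dispatches the lemma in one line by noting that the classes $\UU_f$, $\DU_f$, $\UU^W_f$ and $\DU^W_f$ only contain functions bounded from below, which is precisely the fact driving both of your inequalities. You have simply written out the two-inequality verification that the paper leaves to the reader.
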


\begin{proof}
This follows directly from the definition, since the classes $\UU_f$, $\DU_f$
$\UU^W_f$ and $\DU^W_f$ only contain functions which are bounded from below.
\end{proof}

\begin{prop}    \label{prop-q-lsc-uK-ge-uS}
If $f:\clOmone\to\eR$ is lower $\bCp(\,\cdot\,;\Omone)$-quasisemicontinuous
then $\uZ f\ge \uS_\Omone f$.
\end{prop}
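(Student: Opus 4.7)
The plan is to adapt the obstacle-problem argument from the proof of Proposition~\ref{prop-strres-invariance}, starting from an arbitrary competitor $u\in\DU^W_f$ for $\uZ f$ and showing that a suitable modification of $u$ belongs to the admissible class $\DU_f(\Omone)$ for $\uS_\Omone f$. I would let $M$ be a lower bound for $u$ and $K\Subset\Om$ a compact set with $u\ge f$ on $\Om\setm K$. The lower $\bCp(\,\cdot\,;\Omone)$-quasisemicontinuity of $f$, together with Proposition~\ref{prop-outercap} and the countable subadditivity of $\bCp(\,\cdot\,;\Omone)$, produces a decreasing sequence of $\tauone$-open sets $U_k\subset\clOmone$ with $\bCp(U_k;\Omone)<2^{-kp}$ on the complements of which $f$ is lower semicontinuous, and Lemma~\ref{lem-Newt} then supplies a decreasing sequence of nonnegative $\psi_j\in\Dp(\Om)$ with $\|\psi_j\|_{\Np(\Om)}<2^{-j}$ and $\psi_j\ge k-j$ on $U_k\cap\Om$.

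Next, let $\phi_j$ denote the lsc-regularized solution of the $\K_{u+\psi_j,u+\psi_j}$-obstacle problem. Then $\phi_j\in\Dp(\Om)$ is superharmonic, $\phi_j\ge u\ge M$ everywhere in $\Om$ by comparison, and by lsc-regularity $\phi_j\ge M+(k-j)$ on the open set $U_k\cap\Om$ whenever $k>j$. The crux is to verify that $\phi_j\in\DU_f(\Omone)$ by checking $\liminf_{\Om\ni y\toone x}\phi_j(y)\ge f(x)$ at every $x\in\bdyone\Om$. If $x\in\bigcap_k U_k$, each $U_k$ ($k>j$) is a $\tauone$-neighbourhood of $x$ on which $\phi_j\ge M+(k-j)$, forcing the liminf to be $\infty$. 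Otherwise, pick $k_0$ with $x\notin U_{k_0}$; given $c<f(x)$, enlarge $k_0$ so that $M+k_0-j>c$ as well (still $x\notin U_{k_0}$ since $\{U_k\}$ is decreasing). The lower semicontinuity of $f|_{\clOmone\setm U_{k_0}}$ at $x$, together with Hausdorffness of $\clOmone$, then furnishes a $\tauone$-neighbourhood $V\ni x$ with $V\cap K=\emptyset$ and $f>c$ on $V\cap(\clOmone\setm U_{k_0})$: for $y\in V\cap\Om\setm U_{k_0}$ one has $\phi_j(y)\ge u(y)\ge f(y)>c$, while for $y\in V\cap\Om\cap U_{k_0}$ one has $\phi_j(y)\ge M+k_0-j>c$, and letting $c\nearrow f(x)$ finishes the boundary check.

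Having established $\phi_j\in\DU_f(\Omone)$, I would conclude $\phi_j\ge\uS_\Omone f$ pointwise for every $j$. Since $g_{\psi_j}\to 0$ in $\Lp(\Om)$ and $\{u+\psi_j\}$ decreases to $u$, Proposition~\ref{prop-3.2} ensures that $\phi_j$ decreases q.e.\ in $\Om$ to $u$; hence $u\ge\uS_\Omone f$ q.e., and then everywhere in $\Om$ by lsc-regularity of both sides. Taking the infimum over $u\in\DU^W_f$ yields $\uZ f\ge\uS_\Omone f$. The main obstacle will be the boundary check above: one must simultaneously exploit the lower semicontinuity of $f$ on $\clOmone\setm U_{k_0}$ to control $\phi_j$ outside the bad set and the $\psi_j$-estimate to absorb points inside $U_{k_0}$, while also separating $x\in\bdyone\Om$ from the compact $K\Subset\Om$ by a Hausdorff neighbourhood; this triple interplay is precisely what the $\K_{u+\psi_j,u+\psi_j}$-obstacle modification is designed to reconcile.
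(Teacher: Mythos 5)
Your proof is correct and follows essentially the same strategy as the paper's: start from $u\in\DU^W_f$, perturb by the Lemma~\ref{lem-Newt} functions $\psi_j$, solve the $\K_{u+\psi_j,u+\psi_j}$-obstacle problem to get $\phi_j\in\DU_f(\Omone)$, and pass to the limit via Proposition~\ref{prop-3.2}. The only differences are cosmetic: the paper first reduces to $f\ge0$ and then recovers the general case through $\max\{f,m\}$ and Lemma~\ref{lem-lim-max(f,m)}, whereas you track the lower bound $M=\inf_\Om u$ directly, which lets you treat all $f$ at once; you are also a bit more explicit than the paper about shrinking the neighbourhood $V$ to miss the compact $K$.
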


\begin{proof}
First assume that $f \ge 0$.
Since $f$ is lower $\bCp(\,\cdot\,;\Omone)$-quasi\-semi\-con\-tin\-u\-ous on 
$\clOmone$,  we can find a decreasing sequence of $\tauone$-open subsets 
$U_k$ of $\clOmone$ such that $\bCp(U_k;\Omone) < 2^{-kp}$ 
and $f|_{\clOmone \setm U_k}$ is lower semicontinuous.
Consider the decreasing sequence
of nonnegative functions $\{\psi_j\}_{j=1}^\infty$ given by
Lemma~\ref{lem-Newt} with respect to this sequence of sets. 

Let $u\in\DU^W_f$, set $f_j = u + \psi_j$ 
and let $\phi_j$ be the lsc-regularized solution 
of the $\K_{f_j,f_j}$-obstacle problem.
For $m=1,2,\ldots$, we have by the lsc-regularity of $\phi_j$ that
\begin{equation} \label{eq-N1}
\phi_j=\phi^*_j\ge f^*_j \ge \psi^*_j \ge m \quad \text{on }U_{m+j}\cap\Om.
\end{equation}
Let $\eps >0$ and $x \in \bdyone \Om$.  If $x\notin U_{m+j}$, then
by the lower semicontinuity of $f|_{\clOmone \setm U_{m+j}}$
there is a $\tauone$-neighbourhood $V_x$ of $x$ 
in $\clOmone$ such that 
\begin{equation} \label{eq-N2}
\phi_j=\phi^*_j\ge f^*_j \ge u^* = u \ge f \ge \min\{f(x) -\eps,m\}
 \quad \text{in } (V_x\cap\Om) \setm U_{m+j},
\end{equation}
where the equality $u^*=u$ is due to the lsc-regularity of $u$.
(For the last inequality, we either have $f \ge f(x)-\eps$ if
$f(x)< \infty$, or otherwise $f(x) \ge m$.)
Combining \eqref{eq-N1} and \eqref{eq-N2} we see that for 
$x\in\bdyone\Om\setminus U_{m+j}$,
\begin{equation}   \label{eq-phij-ge-m}
    \phi_j \ge \min\{f(x)-\eps,m\}
       \quad \text{in  }V_x\cap\Om.
\end{equation}
On the other hand, if $x\in U_{m+j}\cap \bdyone \Om$, 
then setting $V_x=U_{m+j}$, we see
by~\eqref{eq-N1} that~\eqref{eq-phij-ge-m} holds as well.
Hence 
\[ 
   \liminf_{\Om \ni y \toone x} \phi_j(y) \ge \min\{f(x)-\eps,m\}.
\]
Letting $\eps \to 0$ and $m \to \infty$ yields
\[
   \liminf_{\Om \ni y \toone x} \phi_j(y) \ge f(x) 
       \quad \text{for all } x \in \bdyone \Om.
\]
As $\phi_j$ is superharmonic, it follows that $\phi_j \in \DU_f(\Omone)$,
and hence that $\phi_j \ge \uSpind{\Omone} f$.

Since $u$ clearly is a solution of the $\K_{u,u}$-obstacle problem,
we see by Proposition~\ref{prop-3.2} that $\{\phi_{j}\}_{j=1}^\infty$ 
decreases q.e.\ to $u$. Hence $u\ge \uSpind{\Omone} f$ q.e.\ in $\Om$.
As both functions are lsc-regularized, the inequality
holds everywhere in $\Om$.
Taking infimum over all $u \in\DU^W_f$ 
shows that $\uZ f\ge \uS_\Omone f$.

Finally, let $f$ be arbitrary. The above argument
applied to $\max\{f,m\}$, together with Lemma~\ref{lem-lim-max(f,m)},
implies that
\[
      \uSpind{\Omone} f = \lim_{m \to -\infty} \uSpind{\Omone} \max\{f,m\}
          \le \lim_{m \to -\infty} \uZ \max\{f,m\}
          = \uZ f  \quad \text{in }\Om.\qedhere
\]
\end{proof}

\begin{prop}        \label{prop-q-usc-uK-le-uS}
If $f:\clOmone\to[-\infty,\infty)$ 
is upper $\bCp(\,\cdot\,;\Omone)$-quasisemicontinuous
and bounded from above then $\uZ f \le \uS_\Omone f$.
\end{prop}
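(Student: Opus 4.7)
The strategy is to mirror the proof of Proposition~\ref{prop-q-lsc-uK-ge-uS} but now moving from Sobolev--Perron solutions to Sobolev--Wiener solutions: starting from an arbitrary $u \in \DU_f(\Omone)$, which is bounded below by some $-N$, and using that $f \le M < \infty$, I will construct for every $\eps>0$ a sequence $\phi_j\in\DU^W_f$ (after a q.e.-irrelevant adjustment of $f$) that decreases q.e.\ to $u+\eps$, giving $u+\eps\ge\uZ f$, and then let $\eps\to 0$ and take the infimum. By upper $\bCp(\,\cdot\,;\Omone)$-quasi\-semi\-continuity I choose a decreasing sequence of $\tauone$-open sets $U_k\subset\clOmone$ with $\bCp(U_k;\Omone)<2^{-kp}$ on the complements of which $f$ is upper semicontinuous, and let $\{\psi_j\}_{j=1}^\infty$ be the associated sequence furnished by Lemma~\ref{lem-Newt}.

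For each $j$, set $k=k(j):=j+M+N+2$. The set $\bdyone\Om\setm U_k$ is compact. For each $x$ in it, combining the boundary condition $\liminf_{\Om\ni y\toone x}u(y)\ge f(x)$ with the upper semicontinuity of $f|_{\clOmone\setm U_k}$ at $x$, I produce a $\tauone$-neighborhood $V_x$ of $x$ such that $u>f-\eps$ on $V_x\cap\Om\setm U_k$; the case $f(x)=-\infty$ is handled separately by exploiting $u\ge -N$ together with $f(y)\to-\infty$ along $\clOmone\setm U_k$. A finite subcover yields a $\tauone$-open set $W\supset\bdyone\Om\setm U_k$ with $u>f-\eps$ on $W\cap\Om\setm U_k$, and then $K_j:=\Om\setm(W\cup U_k)$ is a compact subset of $\Om$ because $W\cup U_k\supset\bdyone\Om$.

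Let $v_j:=u+\psi_j+\eps\in\Dp(\Om)$ and let $\phi_j$ be the lsc-regular\-ized solution of the $\K_{v_j,v_j}$-obstacle problem; then $\phi_j$ is superharmonic, bounded below, and $\phi_j\ge v_j$ q.e.\ in $\Om$. On $W\cap\Om\setm U_k$ this gives $\phi_j\ge u+\eps>f$ q.e., while on $U_k\cap\Om$ the estimate $\psi_j\ge k-j$ from Lemma~\ref{lem-Newt} yields $\phi_j\ge -N+(k-j)+\eps>M\ge f$ q.e. Consequently $\phi_j\ge f$ q.e.\ in $\Om\setm K_j$. Replacing $f$ by $-\infty$ on the q.e.-null exceptional set gives a function $\tilde{f}$ with $\phi_j\in\DU^W_{\tilde{f}}$, and Proposition~\ref{prop-Kf-qe-invar} shows that $\uZ\tilde{f}=\uZ f$, so that $\phi_j\ge\uZ f$ in $\Om$.

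Since $\|\psi_j\|_{\Np(\Om)}<2^{-j}$, the sequence $\psi_j$ decreases q.e.\ to $0$ and $g_{v_j-(u+\eps)}=g_{\psi_j}\to 0$ in $L^p(\Om)$. As $u+\eps$ is a superminimizer and hence its own lsc-regular\-ized $\K_{u+\eps,u+\eps}$-solution, Proposition~\ref{prop-3.2} yields $\phi_j\searrow u+\eps$ q.e., and by lsc-regularity everywhere in $\Om$. Thus $u+\eps\ge\uZ f$ in $\Om$, and letting $\eps\to 0$ and then taking the infimum over $u\in\DU_f(\Omone)$ gives $\uS_\Omone f\ge\uZ f$. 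The delicate points I expect are the uniform treatment of boundary points where $f=-\infty$ in the cover argument, and the careful passage from the q.e.\ inequality $\phi_j\ge f$ in $\Om\setm K_j$ to actual membership in $\DU^W_{\tilde f}$ using Proposition~\ref{prop-Kf-qe-invar}.
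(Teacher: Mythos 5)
Your proof is correct and follows the same overall strategy as the paper's: take $u\in\DU_f(\Omone)$, perturb by the $\psi_j$ from Lemma~\ref{lem-Newt}, solve obstacle problems to get superharmonic $\phi_j$ dominating $f$ near $\bdyone\Om$, and use Proposition~\ref{prop-3.2} to make $\phi_j$ decrease to (a translate of) $u$. There are two instructive differences in execution. First, the paper normalizes at the outset to $0\le f\le1$ and only at the very end extends to $f$ unbounded below via Lemma~\ref{lem-lim-max(f,m)} applied to $\max\{f,m\}$; this lets them use the fixed index $U_{j+1}$ and the clean estimate $\psi_j^*\ge1\ge f-\eps$ there. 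You instead handle the general case head-on by importing the lower bound $u\ge -N$ and the upper bound $f\le M$ and choosing $k(j)=j+M+N+2$ so that $\psi_j\ge k-j$ on $U_{k(j)}$ dominates $M+N+2$; this works but introduces index bookkeeping that the paper's normalization avoids. Second, you establish $\phi_j\ge f$ only q.e.\ in $\Om\setm K_j$ and then patch this via an ad-hoc modification $\tilde f$ and Proposition~\ref{prop-Kf-qe-invar}, whereas the paper gets the pointwise inequality everywhere directly through lsc-regularization, $\phi_j=\phi_j^*\ge f_j^*\ge\psi_j^*$, so no invariance result is needed. Your patch is legitimate, but the paper's lsc-regularity argument is the more economical route; you could have used $\phi_j=\phi_j^*\ge v_j^*\ge\psi_j^*+\eps-N$ to avoid invoking Proposition~\ref{prop-Kf-qe-invar} altogether. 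Also note that the parenthetical ``after a q.e.-irrelevant adjustment of $f$'' in your opening should really say that $\phi_j\in\DU^W_{\tilde f}$ for a q.e.\ modification $\tilde f$ of $f$, not $\phi_j\in\DU^W_f$; you handle this correctly later, but the wording at the start is misleading.
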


\begin{proof}
First, assume that $f$ is bounded, say $0\le f\le1$.
Since $f$ is upper $\bCp(\,\cdot\,;\Omone)$-quasi\-semi\-con\-tin\-u\-ous on 
$\clOmone$,  we can find a decreasing sequence of $\tauone$-open subsets 
$U_k$ of $\clOmone$ such that $\bCp(U_k;\Omone) < 2^{-kp}$ 
and $f|_{\clOmone \setm U_k}$ is upper semicontinuous.
Consider the decreasing sequence
of nonnegative functions $\{\psi_j\}_{j=1}^\infty$ given by
Lemma~\ref{lem-Newt} with respect to this sequence of sets. 

Let $u\in\DU_f$, set $f_j = u + \psi_j$ 
and let $\phi_j$ be the lsc-regularized solution 
of the $\K_{f_j,f_j}$-obstacle problem. 
Then $\phi_j \ge u$ q.e.\ in $\Om$, and as both functions
are lsc-regularized the inequality holds everywhere in $\Om$.

Let $\eps>0$ and $j=1,2,\ldots$ be arbitrary. 
As $\phi_j$ is lsc-regularized,
we see that 
\begin{equation*}
\phi_j = \phi_j^* \ge f_j^* \ge \psi_j^* \ge 1 \ge f-\eps
\quad \text{in } U_{j+1}\cap\Om.
\end{equation*}
Since 
\[
\liminf_{\Om \ni y \toone x} u \ge f(x) \quad \text{for all }x\in\bdyone\Om,
\]
and $f|_{\clOmone \setm U_{j+1}}$ is upper semicontinuous, there exists for every 
$x\in\bdyone\Om\setm U_{j+1}$ a $\tauone$-open set $V_x\ni x$ such
that $u\ge f-\eps$ in $(V_x\cap\Om)\setm U_{j+1}$.
Let 
\[
V=U_{j+1} \cup \bigcup_{x\in\bdyone\Om\setm U_{j+1}} V_x.
\]
As $\phi_j\ge u$, we obtain that
$\phi_j\ge f-\eps$ in $V\cap\Om$.
Since $\Om\setm V$ is compact, it follows that $\phi_j+\eps\in\DU^W_f$,
and hence $\phi_j+\eps\ge \uZ f$.
Letting $\eps \to 0$ shows that $\phi_j \ge\uZ f$.

Since $u$ clearly is a solution of the $\K_{u,u}$-obstacle problem,
we see by Proposition~\ref{prop-3.2} that $\{\phi_{j}\}_{j=1}^\infty$ 
decreases q.e.\ to $u$. 
Hence $u \ge \uZ f$ q.e.\ in $\Om$.
As both functions are 
lsc-regularized,
the inequality
holds everywhere in $\Om$.
Taking infimum over all $u\in\DU_f$ proves the statement for bounded functions.
If $f$ is merely bounded from above then applying the above argument
to $\max\{f,m\}$, together with Lemma~\ref{lem-lim-max(f,m)}, completes the proof.
\end{proof}

The following result is a direct consequence of 
Propositions~\ref{prop-q-lsc-uK-ge-uS} and~\ref{prop-q-usc-uK-le-uS},
cf.\ Theorem~\ref{thm-cont-harm-res}.

\begin{thm}           \label{thm-qcont-harm-res}
Let $f:\clOmone\to\R$
be  a bounded $\bCp(\,\cdot\,;\Omone)$-quasicontinuous function.
Then
$\lZ f = \lS_\Omone f$ and $\uZ f = \uS_\Omone f$.

Moreover, 
$f|_\Om$ is Sobolev-harmonizable if and only if 
$f|_{\bdyone\Om}$ is Sobolev-resolutive,
and when this happens  
$Z f = S_\Omone f$.
\end{thm}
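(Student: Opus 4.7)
The plan is to derive Theorem~\ref{thm-qcont-harm-res} as a direct corollary of the two preceding propositions, in complete analogy with how Theorem~\ref{thm-cont-harm-res} was obtained from Proposition~\ref{prop-usc-Wf-le-Pf}. The only new observation is that quasicontinuity is exactly the conjunction of lower and upper quasisemicontinuity, so both propositions apply.

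First, since $f$ is $\bCp(\,\cdot\,;\Omone)$-quasicontinuous, it is simultaneously lower and upper $\bCp(\,\cdot\,;\Omone)$-quasisemicontinuous: indeed, given $\eps>0$ one chooses a single $\tauone$-open $U$ with $\bCp(U;\Omone)<\eps$ on whose complement $f$ is continuous, and continuity implies both semicontinuities on that complement. Moreover, $f$ is bounded, so in particular bounded from above and finite-valued, which supplies the hypothesis of Proposition~\ref{prop-q-usc-uK-le-uS}.

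Next, apply Proposition~\ref{prop-q-lsc-uK-ge-uS} to $f$ to obtain $\uZ f \ge \uS_\Omone f$, and apply Proposition~\ref{prop-q-usc-uK-le-uS} to $f$ to obtain $\uZ f \le \uS_\Omone f$. Combining these gives $\uZ f = \uS_\Omone f$. Since $-f$ is also bounded and $\bCp(\,\cdot\,;\Omone)$-quasicontinuous, the same two propositions applied to $-f$ yield $\uZ(-f)=\uS_\Omone(-f)$; taking negatives and using the definitions $\lZ f=-\uZ(-f)$ and $\lS_\Omone f=-\uS_\Omone(-f)$ gives $\lZ f = \lS_\Omone f$.

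Finally, for the moreover part, observe that $f|_\Om$ being Sobolev-harmonizable means $\uZ f=\lZ f$ and the common value is real-valued, while $f|_{\bdyone\Om}$ being Sobolev-resolutive means $\uS_\Omone f=\lS_\Omone f$ and the common value is real-valued. By the two identities just proved, these two conditions are equivalent, and when they hold the common value is both $Zf$ and $S_\Omone f$, so $Zf=S_\Omone f$. There is no real obstacle here; the content of the theorem has been isolated in the two semicontinuity propositions, and this statement is essentially a bookkeeping combination of them, parallel to the way Theorem~\ref{thm-cont-harm-res} packaged the upper/lower halves of Proposition~\ref{prop-usc-Wf-le-Pf}.
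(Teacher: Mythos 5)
Your proof is correct and matches the paper's approach exactly: the paper states that Theorem~\ref{thm-qcont-harm-res} is ``a direct consequence of Propositions~\ref{prop-q-lsc-uK-ge-uS} and~\ref{prop-q-usc-uK-le-uS}, cf.\ Theorem~\ref{thm-cont-harm-res}'' and leaves the bookkeeping to the reader. You have simply spelled out that bookkeeping, including the (correct) observations that quasicontinuity implies both quasisemicontinuities and that boundedness supplies the extra hypothesis of Proposition~\ref{prop-q-usc-uK-le-uS}.
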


Example~\ref{ex-punctured-ball} shows that the boundedness
assumptions in Proposition~\ref{prop-q-usc-uK-le-uS}
and Theorem~\ref{thm-qcont-harm-res} cannot be omitted.

\end{document}